\documentclass[11pt]{article}

\usepackage[margin=1in]{geometry}

\usepackage{amsfonts}
\usepackage{amsmath}
\usepackage{amsthm} 
\usepackage{amssymb}
\usepackage{epsfig, graphics}
\usepackage{hyperref}
\usepackage{url}
\usepackage{color}
\usepackage{setspace}
\usepackage{float}
\usepackage{subfig}
\usepackage{colortbl}
\usepackage{bm}
\usepackage{bbm}

\usepackage[square, numbers, comma, sort&compress]{natbib}

\def\bd{{\mathbf d}}

\newcommand{\cN}{{\mathcal{N}}}

\def\cGn{\cG^{(n)}(\bd_n^+,\bd_n^-)}

\def\bS{\mathbf{S}}
\def\bD{\mathbf{D}}

\def\EE{\mathbb{E}}
\def\PP{\mathbb{P}}
\def\NN{\mathbb{N}}
\def\RR{\mathbb{R}}

\def\ind{{\rm 1\hspace{-0.90ex}1}}
\def\Var{\mathrm{Var}}

\newcommand{\Bin}{\mathsf{Bin}}

\def\top{\stackrel{p}{\longrightarrow}}

\newcommand{\cS}{\mathcal{S}}
\newcommand{\cI}{\mathcal{I}}
\newcommand{\cX}{\mathcal{X}}

\newcommand{\cD}{\mathcal{D}}

\newcommand{\cA}{\mathcal{A}}
\newcommand{\cR}{\mathcal{R}}

\newcommand{\cE}{\mathcal{E}}

\newcommand{\cH}{{\mathcal{H}}}

\newcommand{\cG}{{\mathcal{G}}}
\newcommand{\fR}{\mathfrak{R}}
\newcommand{\fF}{\mathfrak{F}}

\def\ind{{\rm 1\hspace{-0.90ex}1}}

\def\top{\stackrel{p}{\longrightarrow}}

\newtheorem{theorem}{Theorem}[section]
\newtheorem{remark}[theorem]{Remark}

\newtheorem{example}[theorem]{Example}
\newtheorem{assumption}{Assumption}
\newtheorem{definition}[theorem]{Definition}

\newtheorem{lemma}[theorem]{Lemma}
\newtheorem{claim}[theorem]{Claim}

\date{}

\parskip 0.2cm

\begin{document}


\title{Ruin Probabilities for Risk Processes in Stochastic Networks}

\author{
Hamed Amini \thanks{Department of Industrial and Systems Engineering, University of Florida, Gainesville, FL, USA, email:  aminil@ufl.edu} \and
 Zhongyuan Cao 
\thanks{INRIA Paris,  2 rue Simone Iff, CS 42112, 75589 Paris Cedex 12, France, and Universit\'e Paris-Dauphine, email: zhongyuan.cao@inria.fr }
\and 
Andreea Minca\thanks{Cornell University, School of Operations Research and Information Engineering, Ithaca, NY, 14850, USA, email: {\tt acm299@cornell.edu}}
\and
Agn\`es  Sulem
\thanks{INRIA,  Paris,  2 rue Simone Iff, CS 42112, 75589 Paris Cedex 12, France, email: agnes.sulem@inria.fr}}

\maketitle

\begin{abstract}
We study multidimensional Cram\'er-Lundberg risk processes where agents, located on a large sparse network, receive losses form their neighbors. To reduce the dimensionality of the problem, we introduce classification of agents according to an arbitrary countable set of types.   
The ruin of any agent triggers losses  for all of its neighbours. We consider the case when the loss arrival process induced by the ensemble of ruined agents follows a Poisson process with general intensity function that scales with the network size. When the size of the network goes to infinity,  we provide  explicit ruin probabilities at the end of the loss propagation process for agents of any  type. These limiting probabilities depend, in addition to the agents' types and the network structure, on the loss distribution and the loss arrival process. For a more complex risk processes on open networks, when in addition to the internal networked risk processes the agents receive losses from external users, we provide bounds on ruin probabilities.

\bigskip

\noindent {\bf Keywords:}  Risk processes, Cram\'er-Lundberg model,  ruin probabilities, stochastic networks.

\end{abstract}


\section{Introduction}

The study of ruin probabilities in stochastic networks is of paramount importance due to its real-world applications. For example, consider the case of financial risk management in the banking sector, where banks are connected through a network of interbank exposures. The ruin of one bank could trigger a chain reaction of losses throughout the network, leading to a systemic risk. Understanding the ruin probabilities of each bank, as well as the impact of different network structures and loss distributions, can help financial regulators to elaborate effective risk management strategies to mitigate the likelihood of a systemic financial crisis. Similarly, in the context of insurance, understanding ruin probabilities is crucial in the design of insurance products and in the assessment of solvency for insurance companies. The study of ruin probabilities can help them to set reserves and premiums that are commensurate with their risks and can also assist regulators in ensuring that insurance companies are adequately capitalized.

The classical compound risk process with Poisson claim arrivals, or the Cram\'er-Lundberg model (\cite{cramer1955collective, lundberg1903approximerad}) has been extensively used in quantitative risk management, see e.g., \cite{delbaen1987classical, mcneil2015quantitative}. In this model, the aggregate capital of an insurer who starts with initial capital $\gamma$, premium rate $\alpha$ and (loss) claim sizes $(L_k)$ is given by the following spectrally negative compound Poisson process
\begin{align}\label{eq:classicRP}
C(t) = \gamma + \alpha t -\sum_{k=1}^{\cN(t)} L_k,
\end{align}
where $L_k, k\in \NN$, are i.i.d. non-negative random variable following a distribution $F$ with mean $\bar{L}$ and $\cN(t)$ is a Poisson process with intensity $\beta>0$ independent of $L_k$. The ruin time for the insurer with initial capital  $\gamma$ is defined by 
$$\tau(\gamma) := \inf\{t\mid C(t)\le 0\},$$
(with the convention that $\inf\emptyset = \infty$) and the central question is to find the ruin probability
$$\psi(\gamma):=\PP(\tau(\gamma)<\infty).$$
 It is known (see e.g.~\cite{asmussen2010ruin, embrechts2013modelling}) that whenever $\beta \bar{L}>\alpha$, we have $\psi(\gamma)=1$ for all $\gamma\in \RR$ and whenever $\beta \bar{L} < \alpha$, the ruin probability can be computed using the famous Pollaczek–Khinchine formula as
\begin{align}
\psi(\gamma)=\left(1-\frac{\beta \bar{L}}{\alpha}\right)\sum_{k=0}^\infty \left(\frac{\beta \bar{L}}{\alpha}\right)^k\left(1-\widehat{F}^{*k}(\gamma)\right),
\label{eq:classickramer}
\end{align}
where 
\[\widehat{F}(\gamma)=\frac{1}{\bar L} \int_0^\gamma \bigl(1-F(u)\bigr) du,\]
and the operator $(\cdot)^{*k}$ denotes the $k$-fold convolution.

In this paper we consider a (stochastic) large sparse network setting that replaces the standalone jump process in the classical model.
In our model, losses of any firm do not occur via an exogenous Poisson process, but are due to the ruin of its neighbouring nodes (agents).

While the classical Cram\'er-Lundberg model and its generalizations such as Sparre  Andersen model (\cite{andersen1957collective}) have been  a pillar for collective risk theory for many decades,
ruin problems beyond one dimension remain challenging. Even numerical approximations are only available for some distributions,  in low dimensions (see e.g., \cite{avram2008exit, avram2008two}) and for some particular hierarchical structures.
For example, \cite{avram2017central} consider a central branch with one subsidiary which do not admit exact solution due to their complex dependent Sparre
Andersen structure. The authors propose  approximation techniques based on replacing the underlying structure with spectrally negative Markov additive processes. In \cite{behme2020ruin, kley2016risk} the authors consider loss propagation in bipartite graphs. For exponential claims, the authors provide Pollaczek–Khinchine formulas for the summative ruin probability of a group of agents. 
One should note that this is not a model of agent interrelation, but a model in which insurers connect to `objects' (external risks). As such, notions of losses that propagate from one ruined agent to another agent do not gave a correspondence in that model.

Network based models have been used  to advance  risk assessment in financial systems. 
An initial body of literature is concerned with economic questions such as  the effects of network structure on financial contagion~\cite{hurd2016contagion, amini2016inhomogeneous, acemoglu2015systemic, gai2010contagion, cont2010network} or with considering and integrating variations of the distress propagation mechanism, see e.g., \cite{glasserman2016contagion, veraart2020distress, benoit2017risks, elliott2014financial}. 
The question of control of financial contagion is posed in \cite{amini2015control, amini2017optimal}, where authors introduce a link revealing filtration and adaptive bailouts  mitigate the extent of contagion.
Many of the initial models are static, in the sense that there is one snapshot of the network and an initial wave of defaults leads to a second wave of defaults and so on. The state of the network does not change over time. Rather, it is reassessed in rounds, in order to find a final set of defaults.
Dynamic contagion is considered for example in \cite{capponi2020dynamic, feinstein2021dynamic}, where nodes are endowed with stochastic processes,  usually jump-diffusions.

This paper originates with the asymptotic analysis in \cite{amcomi-res}. Their results are purely static and their focus is to provide a condition of resilience under which the contagion does not spread to a strictly positive fraction of the agents. In
\cite{amicaosu2021limit, amicaosu2022timechange}, we provide steps for the asymptotic fraction of the ruined agents when nodes may  have special ruin dynamics without growth; 
Unlike   \cite{amcomi-res}, the analysis is not based on the differential equation method. Instead, we  generalize the
law of large numbers for a model of default cascades in the configuration model. 
This allows us to go beyond the static model, and the approach could be used to provide central limit theorems.
Our dynamic leads to a special case of loss intensity. All outgoing half-edges are assigned an exponential clock with parameter one, which determines when losses are revealed. The loss reveal intensity function is then equal to the number of remaining outgoing half-edges at time $t$.
This model can be seen as a variant of the notion of Parisian ruin: indeed, an agent could have become ruined if the incoming loss from a ruined neighbour was observed instantly. Instead, it is observed after an exponential time, during which the agents' capital increases. Therefore, it could well be that it withstands the loss at the time when it is revealed.

Closest to our paper is \cite{amini2022dynamic}. They allow to linear growth  at node level (proportional to the number of the node's links), while the loss reveal intensity is
assumed to be constant equal to the size of network. The key feature that allowed the asymptotic analysis was the fact that each link carried a constant loss.  
This in turn, made their analysis simpler.
Because losses can be arbitrary, the risk process does not evolve according to a given grid as in the case of constant losses. When a firm suffers a loss due to a neighbor failure  it will  move to a lower level of value. Here,  in presence of general losses, all possible value levels are coupled.

The remaining open question, posed in \cite{amini2022dynamic}, is to allow for losses that come from a general distribution, as opposed to constant losses.
In this paper, we solve this  open question, and in doing so, we bridge the risk literature on multi-dimensional risk processes with the financial network literature, and use the asymptotic results of the latter to provide the approximations of the ruin probabilities.

\paragraph{Outline.} 
In Section~\ref{sec:model} we introduce the model of interconnected risk processes. The model is driven by a classification of nodes according to types, whereas the interconnections occur according to the configuration model. In
Section~\ref{sec:main} we state our main results, concerning the asymptotic fraction of ruined nodes.
Section~\ref{sec:proofs} provides the proofs.
In Section~\ref{sec:extensions} we outline a complex risk process driven by both exogenous individual external loss processes and an internal risk processes where losses propagate in the network.
Section~\ref{sec:conc} concludes and proposes several open questions.

\paragraph{Notations.} Let $\{ X_n \}_{n \in \mathbb{N}}$ be a sequence of real-valued random variables on a probability space
$ (\Omega, \mathcal{F}, \mathbb{P})$. If $c \in \mathbb{R}$ is a constant, we write $X_n \stackrel{p}{\longrightarrow} c$ to denote that $X_n$ converges in probability to $c$. 
Let $\{ a_n \}_{n \in \mathbb{N}}$ be a sequence of real numbers that tends to infinity as $n \longrightarrow \infty$. We write $X_n = o_p (a_n)$, if $|X_n|/a_n \top 0$. 
If $\mathcal{E}_n$ is a measurable subset of $\Omega$, for any $n \in \mathbb{N}$, we say that the sequence
$\{ \mathcal{E}_n \}_{n \in \mathbb{N}}$ occurs with high probability (w.h.p.) if $\mathbb{P} (\mathcal{E}_n) = 1-o(1)$, as
$n\rightarrow \infty$.
For any function $f$ defined on $\RR^+:=[0,\infty)$, $\|f\|_{L^1}$ denotes the $L^1$-norm of $f$ on $\RR^+$. For any event or set $A$, we denote by $A^c$ the complement of $A$. $\Bin (k,p)$ denotes a binomial  distribution  corresponding to the number of
successes of a sequence of $k$ independent Bernoulli trials each having probability of success  $p$. We denote by $\ind\{\cE\}$ the indicator of an event $\cE$; it is 1 if $\cE$ holds and 0 otherwise. For $a,b \in \RR$, we denote by $a\wedge b:=\min\{a,b\}$.

\section{The Model}\label{sec:model}

\subsection{Networked risk processes}\label{sec:NRP}
Consider a set  of $n$ agents (e.g., firms, insurers, re-insurers, business lines, \dots) denoted by $[n]:= \{1,2,\dots,n\}$. Agents hold contractual obligations with each other. In our networked risk processes model, the interaction of the agents' capital processes occurs through the network of obligations. Upon the ruin  of an agent, we consider that it will fail on their obligations and the neighbouring agents will suffer losses due to this non fulfillment. Let $\cG_n$ be a directed graph on $[n]$. For two agents $i,j \in [n]$, we write $i\to j$  when there is a directed edge (link) from $i$ to $j$ in $\cG_n$, modeling the fact that $i$ has a contractual obligation to $j$. 
Similar to the classical model \eqref{eq:classicRP},  each agent $i$ is endowed with an initial capital $\gamma_i > 0$, while  $\alpha_i(t)$ is a continuous non-decreasing function  describing the premium accumulation for agent $i$.

Here, we assume that the capital is affected by an initial exogenous proportional shock $\epsilon_i\in[0,1]$  and $\delta_i$ represents the total value of claims held by end-users on agent $i$ (deposits).
It is then possible for an agent to fail after the exogenous shock if its initial capital is lower than the end-user claims, i.e., $C_i(0) := \gamma_i(1-\epsilon_i) + \alpha_i(0)- \delta_i\leq 0$. 
The set of fundamentally ruined agents is thus $$\cD(0):=\{i\in[n]: C_i(0) \leq 0\}.$$
Ruined agents affect their neighbours through the network of obligations.
The ruin time for agent $i\in[n]$ is $\tau_i:= \inf\{t\mid C_i(t)\le 0\}$ where 
we  consider the following risk process for the capital of agent $i$ with network interactions $\cG_n$: 
\begin{align}\label{eq:riskP}
C_i(t):=\gamma_i(1-\epsilon_i)+\alpha_i (t)-\delta_i-\sum_{j\in [n]: j\to i} L_{ji}\ind\{\tau_j+T_{ji} \leq t\}.
\end{align}

Here we denote by $T_{ji}$ the delay between firm $j$'s ruin and the time when a neighbour $i$ processes its losses from the unfulfilled obligations of $j$ to $i$. At time $\tau_j + T_{ji}$, node $j$ processes a loss $L_{ij}$. 
Similar to the classical model, we assume that the incoming losses of each agent $i\in [n]$ are i.i.d. random variables following some positive distribution $F_i$ (potentially depending on the agent $i$ characteristics).
 In order to compare with the classical model, we will assume that $T_{ji}$ are i.i.d. exponentially distributed with some parameter $\beta>0$, i.e.,  $T_{ji}\sim \mathrm{Exp}(\beta)$ for all $i,j \in [n]$. 
 
Consequently, the set of ruined agents at time $t\geq$ will be given by $$\cD(t):=\{i\in[n]: C_i(t)\leq 0\}=\{i\in[n]: \tau_i \leq t\}.$$ We assume that agent $i$ becomes inactive upon ruin, so that $C_i(t)=C_i(t\wedge\tau_i)$.  

\subsection{Node classification}\label{sec:class}
We consider a classification of agents into a countable (finite or infinite)  set of types $\cX$. We denote by $x_i\in \cX$  the  type of agent $i\in [n]$. 

Let us denote by $\mu_x^{(n)}$  the fraction of agents in class $x\in\cX$ in the network $\cG_n$. In order to study the asymptotics, it is natural to assume there is a limiting distribution of types.

\begin{assumption}\label{cond-limit}
We assume that for some probability distribution functions $\mu$ over $\cX$ and independent of $n$, we have that $\mu_x^{(n)} \to \mu_x$, as $n\to \infty$, for all $x\in \cX$.
\end{assumption}

Since the type space is countable,  we can assume without loss of generality that all agents of same type $x$ (for all $x\in \cX$)  have the same number of outgoing links, denoted by $d_x^-$, and the same number of incoming links, denoted by $d_x^+$.   

To reduce the dimensionality of the networked risk processes problem, we further assume that all parameters are type dependent. Namely, we assume that $\gamma_i=\gamma_x, \alpha_i(.)=\alpha_x(.)$ and $\delta_i=\delta_x$ for all agents $i\in[n]$ with $x_i=x$.
Note that the type space can be made sufficiently large (but countable) to incorporate a wide variety of levels for these parameters.

In particular, shocks are assumed to be independent  random variables with distribution function (cdf) $F_x^{(\epsilon)}$ and density function $f_x^{(\epsilon)}$  depending on the type of each agent. We then set  
\begin{align}
q_{x,0}:=1-F_x^{(\epsilon)}\bigl(\frac{\gamma_x+\alpha_x(0)-\delta_x}{\gamma_x}\bigr),
\end{align} 
which  represents the (expected) fraction of initially ruined agents of type $x\in \cX$. 

The distribution of incoming losses for each agent are also assumed to be   type dependent random variables. For all agents of type $x\in \cX$, the loss distribution function (cdf) is denoted by  $F_x$ and the probability density function (pdf) is $f_x$. Thus we have $F_i = F_{x_i}$ for any agent $i\in [n]$.
\begin{remark}[From loss distribution to threshold distribution]\label{Ex:thresholds}
The model, as introduced, can be equivalent to a model of dynamic failure thresholds inferred from the loss distribution. These random thresholds measure how many ruined neighbours can an agent withstand before being ruined due to the incurred losses. 
For $x\in \cX$, let $\epsilon_x$ be a random variable with distribution $F^{(\epsilon)}_x$ and $\{L_{x}^{(k)}\}_{k=1}^{\infty}$ be a set of i.i.d. positive continuous random variables with common cumulative distribution function (cdf) $F_{x}$. The threshold distribution function at time $t$  is defined as $q_{x,0}(t)=q_{x,0}$,
$$q_{x,1}(t):=\PP\bigl(0<\gamma_x(1-\epsilon_x)+\alpha_x (t)-\delta_x\leq L_{x}^{(1)}\bigr),$$
and for all $\theta \geq 2$,
\begin{align*}
q_{x,\theta}(t)& :=\mathbb{P}\bigl(L_{x}^{(1)}+\cdots+L_{x}^{(\theta-1)}<\gamma_x(1-\epsilon_x)+\alpha_x (t)-\delta_x\leq L_{x}^{(1)}+\cdots+L_{x}^{(\theta)}\bigr)
\end{align*}
represents the probability at time $t$ that an agent of type $x$ is ruined after $\theta$ neighbouring ruins.
Since $\alpha_x (t)$ is a non-decreasing function of $t$,  this threshold function will be (stochastically) non-decreasing. Note that when $\alpha_x(t)=0$ over each class $x\in \cX$, the results of~\cite{amicaosu2021limit, amicaosu2022timechange} could be applied by using the above threshold distributions. Note that in these works, the threshold distributions are static, which makes the model simpler to study. Here the distributions change over time because there is time-dependent growth. The closest model is \cite{amini2022dynamic} which consider fixed losses and exponential inter-arrival times with fixed parameter. In this paper, in order to study the general setup, we do not use these threshold distributions - which are given here only for comparison. Instead, our analysis relies on a sequence of random threshold times representing the times where thresholds hit  subsequent levels.
\end{remark}

\subsection{Configuration model}\label{sec:CM}
Given a set of agents $[n]:=\{1, \dots, n\}$, the degree sequences $\bd^+_n=(d_1^+, \dots, d_n^+)$ and $\bd^-_n=(d_1^-, \dots, d_n^-)$ such that $\sum_{i\in [n]} d_i^+ = \sum_{i\in[n]} d_i^-$,  we associate to each agent $i\in [n]$ two sets: $\cH^+_i$ the set of incoming half-edges and $\cH^-_i$ the set of outgoing half-edges, with $|\cH_i^+|=d^+_i$ and $|\cH^-_i|=d^-_i$. Let $\mathbb{H}^+=\bigcup_{i=1}^{n} \cH^+_i$ and $\mathbb{H}^-=\bigcup_{i=1}^{n} \cH^-_i$. A {\it configuration} is a matching of $\mathbb{H}^+$ with $\mathbb{H}^-$. When an out-going half-edge of agent $i$ is matched with an in-coming half-edge of agent $j$, a directed edge from $i$ to $j$ appears in the graph. The configuration model is the random directed multigraph which is uniformly distributed across all configurations. The random graph constructed by the configuration model is denoted by $\cGn$. 
It is easy to show that conditional on the multigraph being a simple graph, we obtain a uniformly distributed random graph with these given degree sequences denoted by $\cG_*^{(n)}(\bd_n^+,\bd_n^-)$. In particular, any property that holds with high
probability on the configuration model also holds with high probability conditional on this random graph being simple (for $\cG_*^{(n)}(\bd_n^+,\bd_n^-)$) provided that
$\liminf_{n\longrightarrow\infty}\mathbb{P}(\cGn \  \mbox{simple})>0$, see e.g.~\cite{hofstad16}. 

We consider the risk processes in random network $\cGn$ satisfying the following regularity condition on the average degrees.

\begin{assumption}\label{cond-avg}
We assume that, as $n\to \infty$, the first moment of degrees converges and is finite:
$$\lambda^{(n)}:=\sum_{x\in \cX} d_x^+ \mu_x^{(n)} = \sum_{x\in \cX} d_x^- \mu_x^{(n)} \stackrel{(\text{as } n\to\infty)}{\longrightarrow} \lambda:= \sum_{x\in \cX} d_x^+ \mu_x \in (0,\infty).$$
\end{assumption}

 \subsection{The loss reveal process}\label{sec:LRP}
In order to study the risk processes in random network $\cGn$, we construct the configuration model simultaneously as we run the ruin propagation model. Starting from the set of initially ruined agents, at each step, we only look at one interaction between a ruined agent and its counterparty in the configuration model. Note that the set of ruined agents either stays the same or augments with each such interaction. We first introduce some notations. 

We denote by $\bD^{(n)}(t)$ the set of ruined agents at time $t$ and set $D^{(n)}(t):=|\bD^{(n)}(t)|$. Similarly, $\bS^{(n)}(t)=[n]/\bD^{(n)}(t)$ denotes the set of solvent agents at time $t$ and we set $S^{(n)}(t)=|\bS^{(n)}(t)|$. For  $x\in \cX$, we denote by $\bS^{(n)}_{x}(t)$ the set of all solvent agents in class $x$ at time $t$ and set $S^{(n)}_{x}(t)=|\bS^{(n)}_{x}(t)|$.  Moreover, for $x\in \cX$ and $\theta=0, \dots, d^+_x$, we denote by $\bS^{(n)}_{x,\theta}(t)$ the set of all solvent agents in $\bS^{(n)}_{x}(t)$ with exactly $\theta$ ruined incoming neighbours at time $t$. Set $S^{(n)}_{x,\theta}(t)=|\bS^{(n)}_{x,\theta}(t)|$. We define similarly the sets $\bD^{(n)}_x(t)$ and $\bD^{(n)}_{x,\theta}(t)$ with corresponding sizes $D^{(n)}_x(t)$ and $D^{(n)}_{x,\theta}(t)$. 
We call all outgoing half-edges  that belong to a ruined  agent the {\it ruinous}  half-edges.

We consider the following loss reveal process, extending the risk processes of Section~\ref{sec:NRP}.  

In this process, losses coming from ruined agents are revealed one by one. At each loss reveal  we look at the interaction between an outgoing half-edge of a ruined agent (ruinous outgoing half-edge) with an incoming half-edge of its counterparty. By virtue of the configuration model,  this is chosen uniformly at random among all remaining incoming half-edges. When the ruinous outgoing half-edge is matched, the counterparty incurs a random  loss, drawn from a distribution depending on the characteristics class $x\in \cX$ of the counterparty. If this amount of loss is larger than the remaining capital, this agent will become ruined and all its outgoing half-edges become ruinous. Note that the loss reveal process stops when all ruinous outgoing half-edges have been matched. We use the notation $W_n(t)$ to denote the number of remaining (unrevealed) ruinous outgoing half-edges at time $t$. The contagion thus stops at the first time when $W_n(t) = 0$.

We consider a general loss reveal intensity process, denoted by $\cR_n(t)$, to describe the intensity of loss reveal at time $t$. Namely, if a loss is revealed at time $t_1\in \RR_+$, then we wait an exponential time with parameter $\cR_n(t_1)$ for the next loss reveal. Between two successive loss reveals, the intensity does not change and $\cR_n(t)$ is a random piecewise function. Note that $\cR_n(t)$ could depend on the state of risk processes at time $t$. In particular,  the networked risk processes of Section~\ref{sec:NRP} in configuration model would be equivalent to this loss reveal process by setting $\cR_n(t)=\beta W_n(t)$. Indeed, each counterparty of a ruinous half-edge will be revealed after an exponential time with parameter $\beta$. When there are $W_n(t)$ such unrevealed counterparties, the next reveal will be given by the minimum of these exponential times.

Our model allows for a general form of the loss reveal intensity $\cR_n(t)$, provided that the following condition holds. Let $\tau^\star_n$ denote the stopping time defined as the first  time when the above loss reveal process ends.  This is the first time such that $W_n(\tau^\star_n)=0$.

\begin{assumption}\label{cond-R}
We assume that the loss intensity function $\cR_n$ satisfies $\cR_n(t)=0$ for $t > \tau^\star_n$, and $\cR_n(t)=n\fR(t)+o_p(n)$ for $t\leq\tau^\star_n$ with $\fR(t)$ continuous, positive and $\|\fR\|_{L^1}:=\int_0^\infty \fR(s) ds <\infty$.
\end{assumption}

 By  Theorem~\ref{thm:centrality}, this assumption holds for the risk processes of Section~\ref{sec:NRP}.
 In the next section we state the limit theorems for the general loss reveal process satisfying Assumption \ref{cond-R} and then we apply them to the particular risk processes of Section~\ref{sec:NRP}.

\section{Main Theorems}\label{sec:main}

\subsection{Asymptotic analysis of the general loss reveal process}\label{sec:general}
We consider the loss reveal process of Section~\ref{sec:LRP} satisfying Assumption~\ref{cond-R} on the random graph $\cGn$ which satisfies Assumptions~\ref{cond-limit}-\ref{cond-avg}. Let us denote by $\cI_n(t)$ a Poisson process with intensity $\cR_n(t)$ at time $t$. This represents the total number of  ruinous outgoing half-edges revealed before time $t$. Then  $\cI_n(\infty)$ represents the total number of ruinous outgoing half-edges that will be revealed if the reveal process continues forever. Since the total number of reveals is bounded from above by the total number of links in the network, we need to stop the Poisson process at $\cI^\star_n=\cI_n(\infty) \wedge (n\lambda^{(n)})$, where for $a,b \in \RR$, we denote by $a\wedge b :=\min\{a,b\}$. 
We define
$$t_\fR(\lambda):=\inf\{t\geq 0: \int_0^t \fR(s)ds\geq \lambda\}.$$ 
By convention, if $\|\fR\|_{L^1}\leq \lambda$ we set $t_\fR(\lambda):=\infty$.

For $z\in [0,1]$, denote by $T_n(z)$ the time needed to reveal $\lceil nz\rceil$ ruinous outgoing half-edges.  The following lemma gives the asymptotic results on $\cI_n(t)$, $\cI^\star_n$ and $T_n(z)$.

\begin{lemma}\label{lem:fraction}
Under Assumptions~\ref{cond-limit}-\ref{cond-avg}, and for any given loss intensity function $\cR_n$ satisfying Assumption~\ref{cond-R}, we have as $n\to\infty$,
\begin{align}\label{eq:L_limit} 
\sup_{t\geq 0}\bigl|\frac{\cI_n(t)}{n} -\int_0^t \fR(s)ds\bigr|\top 0, 
\quad \text{and} \quad  \frac{\cI_n^{\star}}{n}\top \|\fR\|_{L^1}\wedge \lambda.
\end{align}
Further, for all $0\leq a <b < \|\fR\|_{L^1}\wedge \lambda$ and as $n\to \infty$,
\begin{align}
T_n(b)-T_n(a)\top \int_a^b \frac{1}{\fR(s)}ds.
\end{align}
\end{lemma}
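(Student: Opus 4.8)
The plan is to derive all three claims from a functional law of large numbers for the Poisson process $\cI_n(t)$, exploiting the near-deterministic behavior of its (random) intensity $\cR_n(t)$ under Assumption~\ref{cond-R}. First I would treat the intensity: write $A_n(t):=\int_0^t \cR_n(s)\,ds$ for the compensator, and observe that Assumption~\ref{cond-R} gives $\cR_n(t)=n\fR(t)+o_p(n)$ uniformly on $[0,\tst_n]$ and $\cR_n(t)=0$ afterwards, so that $A_n(t)/n \to \int_0^{t\wedge\tst_n}\fR(s)\,ds$. Since the loss reveal process terminates at $\tst_n$ with at most $n\lambda^{(n)}$ reveals, $\tst_n$ is bounded (in probability) by roughly $t_\fR(\lambda)$; combined with $\|\fR\|_{L^1}<\infty$ this lets me replace $\int_0^{t\wedge\tst_n}\fR$ by $\int_0^t \fR$ up to a tail that is uniformly small, giving $\sup_{t\ge0}|A_n(t)/n - \int_0^t\fR(s)ds|\top 0$.

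Next I would upgrade this to the process $\cI_n$. Conditionally on the intensity path, $\cI_n(t)-A_n(t)$ is a mean-zero martingale with predictable quadratic variation $A_n(t)$; a maximal inequality (Doob's $L^2$ inequality applied on the bounded horizon, or the Dvoretzky--Erd\H{o}s--Kakutani / Glivenko--Cantelli argument for Poisson processes) yields $\sup_{t}|\cI_n(t)-A_n(t)| = o_p(n)$, because the total mass $A_n(\infty)/n$ is $O_p(1)$. Dividing by $n$ and combining with the previous step gives the first display in \eqref{eq:L_limit}. For $\cI_n^\star=\cI_n(\infty)\wedge(n\lambda^{(n)})$, I apply the uniform convergence at $t=\infty$: $\cI_n(\infty)/n\top\|\fR\|_{L^1}$, and $\lambda^{(n)}\to\lambda$ by Assumption~\ref{cond-avg}, so the minimum converges to $\|\fR\|_{L^1}\wedge\lambda$ by the continuous mapping theorem.

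For the last statement about $T_n(z)$, note $T_n(z)$ is essentially the generalized inverse of $t\mapsto \cI_n(t)$ evaluated at $\lceil nz\rceil$. On the event $b<\|\fR\|_{L^1}\wedge\lambda$ the target level $nb$ is reached strictly before exhaustion, and since $\fR$ is continuous and positive the limiting function $G(t):=\int_0^t\fR(s)ds$ is continuous and strictly increasing on a neighborhood of $[T_n(a),T_n(b)]$, hence has a continuous inverse there. The uniform convergence $\cI_n(\cdot)/n\top G(\cdot)$ then transfers to the inverses: $T_n(a)\top G^{-1}(a)$ and $T_n(b)\top G^{-1}(b)$, so $T_n(b)-T_n(a)\top G^{-1}(b)-G^{-1}(a)=\int_a^b \frac{1}{\fR(s)}\,ds$ by the change of variables $u=G(s)$, $du=\fR(s)\,ds$. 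The main obstacle I anticipate is making the inversion step rigorous uniformly in $n$: one must rule out that $\cR_n$ (hence $G_n:=A_n/n$) is flat or nearly flat near the relevant levels, which is where positivity and continuity of $\fR$ together with the strict inequality $b<\|\fR\|_{L^1}\wedge\lambda$ are essential, and one must handle the $o_p(n)$ error in $\cR_n$ carefully so that it does not accumulate over the (bounded but possibly growing-in-$n$) interval of integration. A clean way is to prove the elementary deterministic lemma that uniform convergence of monotone (or here, just continuous-and-eventually-constant) functions to a continuous strictly increasing limit implies convergence of the generalized inverses at continuity points of the inverse, and then apply it pathwise along the high-probability event from Assumption~\ref{cond-R}.
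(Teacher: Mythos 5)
Your proposal is essentially correct but takes a genuinely different technical route from the paper. For the uniform convergence of $\cI_n(\cdot)/n$, the paper first establishes pointwise convergence via a Riemann-sum discretization together with the Poisson mean--variance identity and Chebyshev, then upgrades to uniformity by using Lemma~\ref{lem:density} to recast $\cI_n$ as a death process with i.i.d.\ lifetimes drawn from the normalized intensity, and finally applies the Glivenko--Cantelli theorem. You instead decompose $\cI_n(t)-A_n(t)$ as a martingale with predictable quadratic variation $A_n(t)$ and invoke Doob's maximal inequality to get $\sup_t|\cI_n(t)-A_n(t)|=O_p(\sqrt{n})$; this is cleaner, and arguably more robust when $\cR_n$ is state-dependent rather than a true deterministic inhomogeneous Poisson intensity. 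For the $T_n$ statement, the paper runs a contradiction argument for pointwise convergence of $T_n$ to the inverse $T(\cdot)$ of $A_t=\int_0^t\fR$, then applies the inverse function theorem; you propose a deterministic lemma about convergence of generalized inverses under uniform convergence to a strictly increasing continuous limit, which buys the same conclusion via a cleaner abstraction. Both routes are sound.

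One place where you are looser than the paper and should be careful: your passage from $A_n(t)/n\to\int_0^{t\wedge\tst_n}\fR$ to $\sup_{t\ge0}|A_n(t)/n-\int_0^t\fR|\top 0$ is not justified as written. If $\tst_n$ converges in probability to a finite $t^\star$ with $\int_{t^\star}^\infty\fR(s)\,ds>0$, the discrepancy is not a ``uniformly small tail.'' The paper finesses this by \emph{artificially continuing} the reveal process beyond $\tst_n$ (so the compensator keeps growing) and by capping the uniform statement at $t\wedge t_\fR(\lambda)$, matching the hard bound of $n\lambda^{(n)}$ reveals; the conclusion it actually proves is $\sup_{t\geq0}|\cI_n(t)/n-\int_0^{t\wedge t_\fR(\lambda)}\fR(s)\,ds|\top 0$. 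You should either adopt the same continuation-of-the-process convention and truncate at $t_\fR(\lambda)$, or make explicit which version of $\cI_n$ (stopped at $\tst_n$ versus continued) your compensator identity refers to; the lemma's statement and its proof are only consistent under the continued interpretation.
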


The proof of lemma is provided in Section~\ref{proof:lem:fraction}. 

We further define some functions that will be used later. Let for $z\in[0,1]$:
\begin{align*}
b(d,z,\ell):=& \PP(\Bin(d,z)= \ell) = \binom{d}{\ell} z^\ell (1-z)^{d-\ell}, \\
\beta(d,z,\ell):=& \PP(\Bin(d,z)\geq \ell) = \sum_{r=\ell}^d \binom{d}{r} z^r (1-z)^{d-r},
\end{align*}
and $\Bin(d,z)$ denotes the binomial distribution with parameters $d$ and $z$.

For each $x\in\cX$, we denote by $\bm{L}_x:=(L_x^{(1)}, \dots, L_x^{(d_x^+)})$ the sequence of independent random losses with distribution $F_x$ and let $\bm{\ell}_x=(\ell_x^{(1)},\ell_x^{(2)},\ldots,\ell_{x}^{(d^+_x)})$ be a realization of $\bm{L}_x$. For a given $\bm{\ell}_x$ and a given initial shock $\epsilon_x$  define for all $\theta=0, 1, \dots, d^+_x$,
\begin{equation}\label{eq:deftau}
\tau_{x,\theta}(\epsilon_x, \bm{\ell}_x):=\inf\bigl\{ t\geq 0: \gamma_x(1-\epsilon_x)+\alpha_x(t)-\delta_x\geq \sum_{i=1}^\theta\ell_{x}^{(i)}\bigr\}.
\end{equation}
The function $\tau$ can be seen as the time threshold function of the loss $\bm{\ell}_x$. Indeed, for  initial shock $\epsilon$ and upcoming sequence of losses $\bm{\ell}_x$, the threshold function $\tau_{x,k}(\epsilon, \bm{\ell}_x)$ is the smallest time  needed for a firm of type $x$ to have enough capital for absorbing these incoming $k$ losses. Recall that a firm of type $x$ has a capital growth function $\alpha_x$ and external debt $\delta_x$. Note that $\tau_{x,0}(\epsilon_x, \bm{\ell}_x)>0$ denotes the event that agent of type $x$ initially becomes ruined under the shock $\epsilon_x$.

\begin{example}\label{eg:const_loss}
Consider the simple case where there is no recovery for agents, i.e. $\alpha_x=0$ for all $x\in \cX$, and the amount of loss for each agent in the same class $x\in \cX$ is constant $\ell_x$. Then by the definition of $\tau_{x,\theta}$, we have
$$\tau_{x,\theta} = \left\{ \begin{array}{rcl} 0 & \mbox{if} & \theta<\lceil \frac{\gamma_x(1-\epsilon_x)-\delta_x}{\ell_x} \rceil, \\ \infty & \mbox{if} & \theta\geq \lceil \frac{\gamma_x(1-\epsilon_x)-\delta_x}{\ell_x} \rceil.
\end{array}\right.$$
This would be equivalent to type-dependent threshold contagion model which extends the bootstrap percolation model. In bootstrap percolation model, the threshold is fixed for all nodes, i.e. $\lceil \gamma_x/\ell_x \rceil = \theta$ for all $x\in \cX$. We refer to \cite{amini10, Am-nn, balpit07} for results on bootstrap percolation in configuration model.
\end{example}

For a given positive density function $\fR: \RR^+_0\to \RR^+$ with $\|\fR\|_{L^1}< \infty$, $x\in\cX$ and $\theta=0,1, \dots, d^+_x$,
we let the survival probability be (for all $t \geq 0$)
\begin{align}\label{eq:PR}
P^{\fR}_{x,\theta}(t,\epsilon_x,\bm{\ell}_x):=\PP(\tau_{x,0}(\epsilon_x, \bm{\ell}_x)=0 , U^{\fR,t}_{(1)}>\tau_{x,1}(\epsilon_x, \bm{\ell}_x),\ldots,U^{\fR,t}_{(\theta)}>\tau_{x,\theta}(\epsilon_x, \bm{\ell}_x)),
\end{align}
with the convention $P^{\fR}_{x,0}(t,\epsilon_x,\bm{\ell}_x):=\PP(\tau_{x,0}(\epsilon_x, \bm{\ell}_x)=0)$ for all $x\in\cX$, where $U^{\fR,t}_{(1)},U^{\fR,t}_{(2)},\ldots,U^{\fR,t}_{(\theta)}$ are the order statistics of $\theta$ i.i.d. random variables $\{U^{\fR,t}_i\}_{i=1, \dots, \theta}$ with distribution
\begin{align}\label{eq:U}
\PP(U_i^{\fR,t}\leq y)=\frac{\int_0^y \fR(s)ds}{\int_0^t \fR(s)ds},\quad\quad y\leq t.
\end{align}

Knowing that a loss arrives before time $t$, the probability that it arrives before time $y$ is given by \eqref{eq:U}.
Then for a fixed sequence of given losses and initial shock, Equation~\ref{eq:PR} represents the probability that the firm survives at time $t$, given that there are $\theta$ by that time (It must then be that  the ordered losses arrived  after the successive threshold times).

\begin{remark}
The joint probability density of the order statistics $U^{\fR,t}_{(1)},U^{\fR,t}_{(2)},\ldots,U^{\fR,t}_{(\theta)}$ is given by 
$$f_{U^{\fR,t}_{(1)},U^{\fR,t}_{(2)},\ldots,U^{\fR,t}_{(\theta)}}(u_1,u_2,\ldots,u_\theta)=\theta!(\int_0^t\fR(s)ds)^{-\theta}\prod_{i=1}^\theta \fR(u_i),$$
for all $u_1, \dots, u_\theta \in [0,t]$.
\end{remark}

By integrating the conditional survival probability in Equation \eqref{eq:PR} with respect to the probability density function of $\epsilon_x$ and $\bm{\ell}_x$, we obtain the survival probability at time $t$ for any agent of type $x$ with $\theta$ incoming losses absorbed by $t$, denoted by $\cS^{\fR}_{x,\theta}(t)$. This is defined by $\cS^{\fR}_{x,0}(t):=1-q_{x,0}$ and 
\begin{align*}
\cS^{\fR}_{x,\theta}(t):=\int P^{\fR}_{x,\theta}(t,\epsilon_x,\bm{\ell}_x)f_x^{(\epsilon)}(\epsilon_x) f_{x}(\ell_x^{(1)})f_{x}(\ell_x^{(2)})\cdots f_{x}(\ell_x^{(\theta)})d\epsilon_x d\ell_x^{(1)}d\ell_x^{(2)}\cdots d\ell_x^{(\theta)},
\end{align*}
for $\theta=1, \dots, d_x^+$.
It can be equally written as 
\begin{align}\label{eqdef-beta}
\cS^{\fR}_{x,\theta}(t)=\EE[P^{\fR}_{x,\theta}(t,\epsilon_x,\bm{L}_x)]=\PP(\tau_{x,0}(\epsilon_x,\bm{L}_x)=0, U^{\fR,t}_{(1)}>\tau_{x,1}(\epsilon_x, \bm{L}_x),\ldots,U^{\fR,t}_{(\theta)}>\tau_{x,\theta}(\epsilon_x, \bm{L}_x)),
\end{align}
where $\bm{L}_x:=(L_x^{(1)}, \dots, L_x^{(d_x^+)})$ is a sequence of independent random losses with distribution $F_x$ and $\epsilon_x$ is an independent random variable with distribution $F_x^{(\epsilon)}$.

For a given positive density function $\fR: \RR^+_0\to \RR^+$ with $\|\fR\|_{L^1}< \infty$, we define 
\begin{align}
\phi^{\fR}(t):=\frac{\int_0^{t\wedge t_{\fR}(\lambda)} \fR(s)ds}{\lambda},
\end{align}
so that the binomial probability $b(d^+_x,\phi^{\fR}(t),\theta)$ represents the probability an agent of type $x$ suffers $\theta$ losses prior to time $t$.

For $t\geq 0$ and given $\fR$, we define the following functions which will be shown to be the limiting fractions of surviving and defaulted agents, respectively:
\begin{align}
f^\fR_S(t):=& \sum_{x\in \cX} \mu_x \sum_{\theta=0}^{d_x^+}b(d^+_x,\phi^{\fR}(t),\theta) \cS^\fR_{x,\theta}(t), \ \ f^\fR_D(t)=1-f^\fR_S(t).
\label{eq:fs}
\end{align}

We further define the following function which will be shown to be fraction of remaining (unrevealed) ruinous outgoing half-edges at time $t$ (characterizing the contagion stopping time):
\begin{align}
f^\fR_{W}(t):=& \lambda(1-\phi^\fR(t)) - \sum_{x\in \cX} \mu_x d_x^-\sum_{\theta=0}^{d_x^+}b(d^+_x,\phi^{\fR}(t),\theta) \cS^\fR_{x,\theta}(t).
\label{eq:fw}
\end{align}

Now we can state the following limit theorem regarding the fraction of solvent and defaulted agents at time $t$. Recall that $\tau^\star_n$ is the stopping time at which the ruin propagation stops, i.e., this is the first time such that $W_n(\tau^\star_n)=0$.

\begin{theorem}\label{thm:centrality}
Under Assumptions~\ref{cond-limit}-\ref{cond-avg}, and for any given loss intensity function $\cR_n$ satisfying Assumption~\ref{cond-R}, we have as $n\to\infty$,
\begin{align*}
\sup\limits_{t\leq \tau^\star_n}\bigl|\frac{S_{x,\theta}^{(n)}(t)}{n}-\mu_x b(d^+_x,\phi^{\fR}(t),\theta) \cS^{\fR}_{x,\theta}(t)\bigr|\top 0.
\end{align*}
Further, as $n\to \infty$,
\begin{align*}
\sup\limits_{t\leq \tau^\star_n}\bigl|\frac{S^{(n)}(t)}{n}-f^{\fR}_S(t)\bigr|\top 0, & \ \   \sup\limits_{t\leq \tau^\star_n}\bigl|\frac{D^{(n)}(t)}{n}-f^{\fR}_D(t)\bigr|\top 0,
\end{align*}
and the process $W_n$ satisfies
\begin{align*}
\sup\limits_{t\leq \tau^\star_n}\bigl|\frac{W_n(t)}{n}-f^{\fR}_W(t)\bigr|\top 0.
\end{align*}
\end{theorem}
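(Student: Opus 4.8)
The plan is to run the standard ``differential-equation / martingale'' method for cascades on the configuration model, but indexed by the number of revealed ruinous half-edges rather than by real time, and then to transfer the result back to the real-time scale using Lemma~\ref{lem:fraction}. First I would set up the exploration process: reveal ruinous outgoing half-edges one at a time, each matched to a uniformly chosen unmatched incoming half-edge. Let $k$ be the number of reveals so far and let $\widehat S_{x,\theta}(k)$, $\widehat W(k)$ be the (relabelled) quantities after $k$ reveals. Because the incoming half-edge is uniform among the roughly $n\lambda^{(n)}-k$ remaining ones, a single step increments the ``hit count'' of a uniformly chosen agent, weighted by its number of incoming half-edges; conditionally on the current counts, the probability that the hit agent is of type $x$ with currently $\theta$ absorbed losses is $d_x^+\widehat S_{x,\theta}(k)/(n\lambda^{(n)}-k)+o_p(1)$. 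The key probabilistic input is that, conditionally on being hit for the $(\theta{+}1)$-st time at the moment corresponding to real time $s$, an agent of type $x$ survives iff its realized threshold times $\tau_{x,j}(\epsilon_x,\bm{L}_x)$ all lie below the arrival times of its first $\theta{+}1$ losses; and by the construction in Section~\ref{sec:LRP} together with \eqref{eq:U}, those arrival times, conditioned on lying below $t$, are distributed as the order statistics $U^{\fR,t}_{(\cdot)}$. This is exactly what makes $\cS^{\fR}_{x,\theta}$ appear.

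Second, I would write the one-step conditional drifts of $\widehat S_{x,\theta}$ and $\widehat W$ and identify the limiting ODE system. Parametrizing by $u=k/n$ and noting (via Lemma~\ref{lem:fraction}) that $u$ reveals correspond to real time $T_n(u)\to$ the time $t$ with $\int_0^t\fR=\lambda u$, i.e. $u=\phi^{\fR}(t)$, the drift equations for $S_{x,\theta}/n$ are precisely the forward Kolmogorov equations of a $\mathrm{Bin}(d_x^+,\phi^{\fR}(t))$ count combined with the survival weighting, whose solution is $\mu_x b(d_x^+,\phi^{\fR}(t),\theta)\cS^{\fR}_{x,\theta}(t)$; summing over $\theta$ and over $x$ gives $f^{\fR}_S$, and the half-edge bookkeeping (each reveal removes one ruinous half-edge and each new default of type $x$ adds $d_x^-$) gives the expression \eqref{eq:fw} for $f^{\fR}_W$. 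I would make this rigorous by a Doob/Azuma argument: the martingale part of each coordinate has bounded increments (at most $\max_x d_x^-$, which is finite for each fixed type but must be handled by truncating $\cX$ to finitely many types, controlling the tail via Assumption~\ref{cond-avg} and $\mu_x d_x^+\to$ summable), so a maximal inequality plus a Grönwall/Lipschitz estimate for the ODE gives uniform-in-$k$ concentration $\sup_{k\le \cI_n^\star}|\widehat S_{x,\theta}(k)/n-\cdots|\top0$.

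Third, I would transfer from the $k$-scale to the $t$-scale. On the event (of probability $1-o(1)$, by Lemma~\ref{lem:fraction}) that $T_n$ is uniformly close to its limit on compact subintervals of $[0,\|\fR\|_{L^1}\wedge\lambda)$, and since $\phi^{\fR}$ is continuous and the limiting functions $t\mapsto \mu_x b(d_x^+,\phi^{\fR}(t),\theta)\cS^{\fR}_{x,\theta}(t)$ are (uniformly) continuous, composition preserves uniform convergence, yielding the stated $\sup_{t\le\tau_n^\star}$ bounds; the stopping time $\tau_n^\star$ is exactly the $k$ at which $\widehat W$ first hits $0$, and concentration of $\widehat W$ pins it down. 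A separate small argument handles the degenerate regime $\|\fR\|_{L^1}\le\lambda$ where the process may exhaust all half-edges before $\fR$ runs out.

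The main obstacle will be the interchange of limits near the endpoint of the interval and, relatedly, handling a countably infinite type space with possibly unbounded degrees: the Azuma increments are bounded by $d_x^-$, so I cannot get a single uniform bound without truncating to a finite set $\cX_N$ of types carrying all but $\varepsilon$ of the mass (in the $\lambda$-weighted sense), bounding the contribution of the discarded types by their aggregate half-edge count, and then letting $N\to\infty$. Making the error from truncation uniform in $t\le\tau_n^\star$ simultaneously with the ODE-approximation error is the delicate bookkeeping step; everything else is the now-standard cascade-on-configuration-model machinery.
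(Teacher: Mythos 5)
Your proposal takes a genuinely different route from the paper. The paper does \emph{not} use the differential-equation / Wormald method you propose; the authors explicitly note in the introduction that, unlike their earlier work, the analysis here ``is not based on the differential equation method.'' Instead, the paper's proof conditions on the total number of reveals $\cI^\star_n$ and uses Lemma~\ref{lem:density} to reformulate the loss-reveal process as a \emph{death process}: each incoming half-edge is (approximately) independently matched to a ruinous outgoing half-edge with probability $\cI^\star_n/(n\lambda^{(n)})$, and if matched, the matching time has density $f^{(n)}(t)=\cR_n(t)/(\|\cR_n\|_{L^1}\wedge n\lambda^{(n)})$. This turns each agent's fate at time $t$ into a deterministic function $H_{x,\theta,t}$ of an i.i.d.\ tuple $(\epsilon_x,\bm{L}_x,\bm{U}^{\fR}_x)$, so that $S^{(n)}_{x,\theta}(t)/N^{(n)}_x$ is an empirical average. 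The uniform-in-$t$ convergence then comes from the Glivenko--Cantelli theorem for finite-VC classes (Lemma~\ref{lem:GC}), after checking ${\rm VC}(\mathcal{H}_{x,\theta})<\infty$. The truncation over types to control the tail of the degree sequence is the one ingredient your proposal and the paper share.

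There is a real gap in your route that the paper's formulation is specifically designed to avoid. You claim the one-step drifts of $\widehat S_{x,\theta}(k)$ give an autonomous ODE system in $u=k/n$, but in this model the vector of counts $(\widehat S_{x,\theta}(k))_{x,\theta}$ is \emph{not} Markov. When an agent of type $x$ with $\theta$ absorbed losses is hit for the $(\theta+1)$-st time at real time $s$, its survival probability depends on the joint law of $(\epsilon_x,\bm{L}_x)$ and of its past hit times $u_{(1)},\ldots,u_{(\theta)}$ \emph{conditioned on having survived} those $\theta$ hits; this conditional law is not a function of $(x,\theta,k)$ alone but of the full history, because the growth term $\alpha_x(\cdot)$ couples hit times to threshold times in \eqref{eq:deftau}. (In the special case $\alpha_x\equiv 0$ the threshold structure of Remark~\ref{Ex:thresholds} restores a memoryless ``hazard'' structure and the classical DE method does apply; that is the regime covered by Theorem~\ref{coro:centrality}.) To make your martingale/ODE argument rigorous you would need to augment the state with the empirical distribution of hit-time histories per type and per count, which is exactly the infinite-dimensional object the paper's $\PP^n_x H_{x,\theta,t}$ encodes; at that point you have effectively reproduced the empirical-process formulation, and the Doob/Azuma machinery is doing the work of Glivenko--Cantelli. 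Your writeup as stated elides this step by asserting the drift is ``the forward Kolmogorov equation of a $\Bin(d_x^+,\phi^{\fR}(t))$ count combined with the survival weighting,'' which presupposes the very product-form limit you are trying to prove. The rest of the plan (transferring between the $k$-scale and the $t$-scale via Lemma~\ref{lem:fraction}, truncating the type space) is sound and matches what the paper does.
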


The proof of theorem is provided in Section~\ref{proof:thm:centrality}.

\begin{remark}
    
The above theorem generalizes the results of \cite{amini2022dynamic} to the case of a general loss reveal intensity function $\cR_n(t)$ satisfying Assumption \ref{cond-R}. In contrast, in \cite{amini2022dynamic},  the reveal intensity is assumed to be constant equal to the size of network $n$ and the recovery rate is proportional to the agent's connectivity $d^+_x$, for each type $x\in\cX$ (the type is simply the degrees of each agent in \cite{amini2022dynamic}). Namely, $\cR_n(t)=n$ and $\alpha_x(t)=\alpha d^+_x/\lambda t$ for all $x\in\cX$. In this case, $\fR(t)= 1$ for $t\in [0,\lambda]$ and $\fR(t)= 0$ for $t> \lambda$. Thus $\phi^\fR(t)=t/\lambda$ and for $\theta=1, \dots, d^+_x$, $U^{\fR,t}_{(1)},U^{\fR,t}_{(2)},\ldots,U^{\fR,t}_{(\theta)}$ become the order statistics of $\theta$ i.i.d. uniform random variables on $[0,\lambda]$. Also, the time threshold $\tau_{x,k}$ does not have a dependence on the loss $\bm{\ell}_x$ as in  our definition \eqref{eq:deftau}. It simplifies to 
$$\tau_{x,\theta}:=\inf\{ t\geq 0: \Theta_x+ \alpha t d_x^+/\lambda \geq \theta\},$$
where $\Theta_x$ is the random initial default threshold $\PP(\Theta_x=k)=q_{x,k}$ as defined in Remark~\ref{Ex:thresholds}. In this case, we can regard the default threshold as the number of losses each agent could absorb and we could recover the results of~\cite{amini2022dynamic}.
\end{remark}

In the case when there is no growth in the network and $\alpha_x(t)=0$, it is more convenient to characterize the above limit functions through the threshold distribution functions $q_{x,\theta}$ and $\bar{q}_x$ which could be defined as (similar to Remark~\ref{Ex:thresholds})
\begin{align*}
q_{x,\theta}(t)& :=\mathbb{P}\bigl(L_{x}^{(1)}+\cdots+L_{x}^{(\theta-1)}<\gamma_x(1-\epsilon_x)+\alpha_x (t)-\delta_x\leq L_{x}^{(1)}+\cdots+L_{x}^{(\theta)}\bigr),
\end{align*}
for $\theta=1,\ldots,d^+_x$, and 
$$\bar{q}_x:=\PP\bigl(L_x^{(1)}+\dots+ L_x^{(d^+_x)} <\gamma_x(1-\epsilon_x)+\alpha_x (t)-d_x \bigr)=1-\sum_{\theta=0}^{d_x^+} q_{x,\theta}.$$

For $t\geq 0$, we define the functions (which are the simplified versions of  those in \eqref{eq:fs} and respectively \eqref{eq:fw})
\begin{align*}
\widehat{f}^\fR_S(t):=& \sum_{x\in \cX}\mu_x \bigl[\sum_{\theta=1}^{d_x^+}q_{x,\theta} \beta\bigl(d_x^+, 1-\phi^\fR(t), d_x^+-\theta+1\bigr)+\bar{q}_x\bigr], \ \ \widehat{f}^\fR_D(t)=1-\widehat{f}^\fR_S(t),\\
\widehat{f}^\fR_{W}(t):=& \lambda(1-\phi^\fR(t)) - \sum_{x\in \cX} \mu_xd_x^-\bigl[\sum_{\theta=1}^{d_x^+}q_{x,\theta} \beta\bigl(d_x^+,1-\phi^\fR(t), d_x^+-\theta+1\bigr)+\bar{q}_x\bigr].
\end{align*}

As a corollary of Theorem~\ref{thm:centrality}, we have the following theorem.

\begin{theorem}\label{coro:centrality}
Suppose there is no growth in the network, i.e., $\alpha_x=0$ for all $x\in \cX$. Under Assumptions~\ref{cond-limit}-\ref{cond-avg}, and for any given loss intensity function $\cR_n$ satisfying Assumption~\ref{cond-R}, we have as $n\to\infty$, 
\begin{align*}
\sup\limits_{t\leq \tau_n}\bigl|\frac{S_{x,\theta}^{(n)}(t)}{n}-\mu_x  b\bigl(d_x^+, \phi^\fR(t), \theta\bigr)\bigl(\sum_{\delta=\theta+1}^{d^+_x}q_{x,\theta}+\bar{q}_x\bigr)\bigr|\top 0
\end{align*}
for all $x\in \cX$ and $\theta=1, \dots, d_x^+$. 
Further, as $n\to \infty$,
\begin{align*}
\sup\limits_{t\leq \tau_n}\bigl|\frac{S^{(n)}(t)}{n}-\widehat{f}^{\fR}_S(t)\bigr|\top 0, & \ \   \sup\limits_{t\leq \tau_n}\bigl|\frac{D^{(n)}(t)}{n}-\widehat{f}^{\fR}_D(t)\bigr|\top 0,
\end{align*}
and the process $W_n$ satisfies
\begin{align*}
\sup\limits_{t\leq \tau_n}\bigl|\frac{W_n(t)}{n}-\widehat{f}^{\fR}_W(t)\bigr|\top 0.
\end{align*}
\end{theorem}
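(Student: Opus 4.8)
The plan is to obtain the statement as a direct specialization of Theorem~\ref{thm:centrality} to the case $\alpha_x\equiv 0$, showing that the building‑block functions $\cS^\fR_{x,\theta}$, $f^\fR_S$, $f^\fR_W$ then collapse to the announced closed forms; no new probabilistic estimate is needed. The first step I would carry out is the degeneration of the time‑threshold function. When $\alpha_x=0$, the quantity in \eqref{eq:deftau} satisfies $\tau_{x,\theta}(\epsilon_x,\bm{\ell}_x)=0$ if $\gamma_x(1-\epsilon_x)-\delta_x\ge\sum_{i=1}^{\theta}\ell_x^{(i)}$ and $=+\infty$ otherwise. Since $\fR$ is positive and continuous, each $U^{\fR,t}_i$ is almost surely strictly positive and finite for $t>0$, so the constraint $\{U^{\fR,t}_{(i)}>\tau_{x,i}(\epsilon_x,\bm{\ell}_x)\}$ coincides, up to a null event, with $\{\tau_{x,i}(\epsilon_x,\bm{\ell}_x)=0\}$, independently of $t$; and because the partial sums increase in $i$, $\{\tau_{x,\theta}=0\}\subseteq\{\tau_{x,j}=0\}$ for $j\le\theta$. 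Hence \eqref{eq:PR} reduces to $P^\fR_{x,\theta}(t,\epsilon_x,\bm{\ell}_x)=\ind\{\gamma_x(1-\epsilon_x)-\delta_x\ge\textstyle\sum_{i=1}^{\theta}\ell_x^{(i)}\}$, with no $t$‑dependence, and taking expectations in \eqref{eqdef-beta} gives $\cS^\fR_{x,\theta}(t)=\PP(L_x^{(1)}+\cdots+L_x^{(\theta)}\le\gamma_x(1-\epsilon_x)-\delta_x)$, also independent of $t$.

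Next I would identify this with the static threshold law. By continuity of $F_x$ the running sums cross the level $\gamma_x(1-\epsilon_x)-\delta_x$ at an almost surely unique index, and decomposing the event above according to that index, together with the definitions of $q_{x,\theta}$ and $\bar{q}_x$, yields $\cS^\fR_{x,\theta}(t)=\sum_{k=\theta+1}^{d^+_x}q_{x,k}+\bar{q}_x$ for $\theta\ge 1$ (consistent with the convention $\cS^\fR_{x,0}=1-q_{x,0}$, since this also equals $\bar q_x+\sum_{k=1}^{d^+_x}q_{x,k}$). Plugging this into the first display of Theorem~\ref{thm:centrality} already gives the claimed limit for $S^{(n)}_{x,\theta}(t)/n$, with $\tau_n$ here being the same stopping time as $\tau^\star_n$, i.e.\ the first time $W_n(\cdot)=0$, so the supremum ranges over the same set. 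It then remains to check $f^\fR_S=\widehat{f}^\fR_S$ and $f^\fR_W=\widehat{f}^\fR_W$; fixing $x$ and writing $d=d^+_x$, $z=\phi^\fR(t)$, both reduce to the combinatorial identity
\[
\sum_{\theta=0}^{d}b(d,z,\theta)\Bigl(\bar{q}_x+\sum_{k=\theta+1}^{d}q_{x,k}\Bigr)=\bar{q}_x+\sum_{\theta=1}^{d}q_{x,\theta}\,\beta\bigl(d,1-z,d-\theta+1\bigr),
\]
which I would prove by summing the constant $\bar{q}_x$ against $\sum_{\theta}b(d,z,\theta)=1$, exchanging the order of the double sum so that each $q_{x,k}$ carries the weight $\sum_{\theta=0}^{k-1}b(d,z,\theta)=\PP(\Bin(d,z)\le k-1)$, and invoking the reflection $\PP(\Bin(d,z)\le k-1)=\PP(\Bin(d,1-z)\ge d-k+1)=\beta(d,1-z,d-k+1)$. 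Carrying the same computation with the extra weight $d^-_x$ inside the sum over $x$, and keeping the common term $\lambda(1-\phi^\fR(t))$, handles the $W$‑functions; then Theorem~\ref{thm:centrality} transfers the convergence verbatim.

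I do not expect a genuine obstacle here: the corollary is essentially bookkeeping on top of Theorem~\ref{thm:centrality}. The only two points that need a little care are, first, justifying in the degeneration step that positivity of $\fR$ and continuity of $F_x$ make the order‑statistic constraints collapse cleanly to the dichotomy $\{\tau_{x,i}=0\}$ versus $\{\tau_{x,i}=\infty\}$ (the boundary value $t=0$ being harmless because $b(d,0,\theta)=0$ for $\theta\ge1$), and second, tracking the reflected index $d^+_x-\theta+1$ correctly in the lower‑tail term $\beta(\cdot)$ when passing between the two forms of $f_S$ and $f_W$.
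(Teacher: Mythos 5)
Your proposal is correct and follows essentially the same route as the paper's own proof: degenerate $\tau_{x,\theta}$ to $\{0,\infty\}$ when $\alpha_x\equiv 0$, observe that $\cS^{\fR}_{x,\theta}(t)$ collapses to the time-independent quantity $\bar q_x+\sum_{k=\theta+1}^{d_x^+}q_{x,k}$, apply Theorem~\ref{thm:centrality}, and then use the reflection $b(d,z,\theta)=b(d,1-z,d-\theta)$ together with a swap of the double sum to match $\widehat f^{\fR}_S$ and $\widehat f^{\fR}_W$. Your write-up is in fact a bit more careful than the paper's (it keeps the $\theta=0$ term explicitly and spells out the lower-tail/upper-tail reflection), but the argument is the same.
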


The proof of above theorem is provided in Section~\ref{proof:coro:centrality}.

Theorem~\ref{coro:centrality} generalizes  the limit theorem of \cite{amcomi-res}
to the dynamic case. It also generalizes the law of large numbers of \cite{amicaosu2021limit}, where the authors consider default cascades in configuration model such that all outgoing half-edges are assigned with an exponential clock with parameter one, leading to a loss reveal intensity function  equal to the number of remaining outgoing half-edges at time $t\geq 0$ and satisfying $\cR_n(t)=n\lambda e^{-t}+o_p(n)$.

In order to determine the ruin probabilities, we need to study the stopping time $\tau^\star_n$ when there are no more ruinous outgoing links in the system. Since from Theorem~\ref{thm:centrality}, the fraction of remaining ruinous outgoing half-edges converges to $f^{\fR}_W(t)$, we define
\begin{align}
 t^\star_\fR:=\inf \bigl\{t \in[0,1]: f^{\fR}_W(t)=0\bigr\}.
 \end{align}

We say that $t^\star_\fR<\infty$ is a stable solution of $f^{\fR}_W(t)=0$ if  there exists a small $\epsilon>0$ such that $f^{\fR}_W(t)$ is negative on $[t^\star_\fR, t^\star_\fR+\epsilon)$.
 We have the following lemma.
\begin{lemma}\label{lem:tau}
Under Assumptions~\ref{cond-limit}-\ref{cond-avg}, and for any given loss intensity function $\cR_n$ satisfying Assumption~\ref{cond-R}, we have as $n\to\infty$:
\begin{itemize}
\item If $t^\star_\fR<\infty$ is a stable solution of $f^{\fR}_W(t)=0$, then $\tau^\star_n\top t^\star_\fR$.
\item If $t^\star_\fR=\infty$, then $\tau^\star_n\top \infty$.
\end{itemize}
\end{lemma}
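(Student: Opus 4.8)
The plan is to transfer the deterministic-limit statement of Theorem~\ref{thm:centrality} for the process $W_n$ into a statement about the hitting time $\tau^\star_n = \inf\{t : W_n(t) = 0\}$, using the standard argument that uniform convergence of a sequence of (non-negative, eventually absorbing) processes to a continuous limit forces convergence of first-passage times to the first \emph{stable} zero of the limit. The key input is the third display of Theorem~\ref{thm:centrality}, namely $\sup_{t\le\tau^\star_n}|W_n(t)/n - f^\fR_W(t)| \top 0$, together with the structural fact from Assumption~\ref{cond-R} that $\cR_n(t)=0$ and hence $\cI_n$ is frozen for $t>\tau^\star_n$, so that $W_n(t)=0$ for all $t \ge \tau^\star_n$ and $W_n$ never leaves $0$ once it hits it.

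First I would treat the stable case $t^\star_\fR<\infty$. Fix $\epsilon>0$ small enough that $t^\star_\fR+\epsilon \le 1$ and that $f^\fR_W$ is strictly negative on $(t^\star_\fR, t^\star_\fR+\epsilon]$ (possible by the definition of a stable solution and continuity of $f^\fR_W$, which follows from continuity of $\fR$, $\phi^\fR$ and the $\cS^\fR_{x,\theta}$). For the \emph{upper bound} $\tau^\star_n \le t^\star_\fR+\epsilon$ w.h.p.: pick a point $t_1\in(t^\star_\fR, t^\star_\fR+\epsilon)$ with $f^\fR_W(t_1) = -c < 0$; on the event $\{\tau^\star_n > t_1\}$ the uniform bound applies at $t_1$ and gives $W_n(t_1)/n \ge f^\fR_W(t_1) - o_p(1) = -c + o_p(1)$, while $W_n(t_1)/n \ge 0$ always — this is not yet a contradiction, so instead I would argue that on $\{\tau^\star_n > t_1\}$ we have $\sup_{t\le t_1}|W_n(t)/n - f^\fR_W(t)|\to 0$, forcing $W_n(t_1)/n \to f^\fR_W(t_1)<0$, contradicting $W_n \ge 0$; hence $\PP(\tau^\star_n > t_1)\to 0$. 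For the \emph{lower bound} $\tau^\star_n \ge t^\star_\fR - \epsilon$ w.h.p.: by minimality of $t^\star_\fR$, $f^\fR_W$ is strictly positive on the compact interval $[\delta, t^\star_\fR-\epsilon]$ for suitable small $\delta>0$ (and near $0$ one checks $f^\fR_W(0)=\lambda - \sum_x\mu_x d_x^-(1-q_{x,0})\ge 0$ with the process starting from the initially ruined set, so the contagion does not stop immediately unless it is trivial); let $m = \min_{[\delta,\,t^\star_\fR-\epsilon]} f^\fR_W > 0$, so that if $\tau^\star_n \in [\delta, t^\star_\fR-\epsilon]$ then evaluating the uniform bound at $\tau^\star_n$ yields $0 = W_n(\tau^\star_n)/n \ge m - o_p(1)$, impossible for large $n$. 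Combining the two bounds gives $\tau^\star_n \top t^\star_\fR$.

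For the unstable/infinite case $t^\star_\fR=\infty$, the set $\{t\in[0,1]: f^\fR_W(t)=0\}$ contains no stable zero, so $f^\fR_W > 0$ on all of $[\delta,1]$ for every $\delta>0$ (any zero would either be isolated from below with $f^\fR_W$ positive on both sides, which still lets the discrete process pass through it, or $f^\fR_W$ would touch $0$ without changing sign); then the same argument as the lower bound above, applied on $[\delta,1]$ with $m=\min_{[\delta,1]}f^\fR_W>0$, shows $\PP(\tau^\star_n \le 1)\to 0$, i.e.\ $\tau^\star_n \top \infty$ (recall $\tau^\star_n$, measured on the rescaled ``fraction'' clock, lives in $[0,1]$ only if the process stops, and otherwise runs to the end of the reveal process; here one concludes the reveal process exhausts all $n\lambda^{(n)}$ half-edges, i.e.\ $\cI^\star_n = n\lambda^{(n)}$, consistent with $\|\fR\|_{L^1}\ge\lambda$ via Lemma~\ref{lem:fraction}).

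The main obstacle I anticipate is the behaviour of $W_n$ at a zero of $f^\fR_W$ that is a touch-point but not a stable crossing: the uniform convergence alone does not pin down whether the discrete process $W_n$, which moves in increments of order $1/n$, actually hits $0$ there or bounces back up. This is exactly why the statement is phrased in terms of \emph{stable} solutions, and the honest resolution is that at a stable solution $f^\fR_W$ is strictly negative just afterwards, so the fluid limit certifies that $W_n$ must have already emptied — one only needs to rule out $W_n$ hitting $0$ \emph{before} $t^\star_\fR$, which the strict positivity of $f^\fR_W$ on compact sub-intervals handles cleanly. A secondary technical point is the behaviour near $t=0$: one must check that $f^\fR_W(0)\ge 0$ and that, unless the initial ruined set is empty (in which case $\tau^\star_n = 0$ trivially and $t^\star_\fR=0$ is a — degenerate — stable solution), the process genuinely starts propagating; this follows because immediately after the first reveal $W_n$ has a positive drift matching $\frac{d}{dt}f^\fR_W(0^+)$, and can be folded into the lower-bound argument by choosing $\delta$ small.
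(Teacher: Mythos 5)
Your proof follows essentially the same route as the paper: both deduce convergence of the hitting time $\tau^\star_n$ from the uniform convergence $\sup_{t\leq\tau^\star_n}|W_n(t)/n - f^\fR_W(t)| \top 0$ of Theorem~\ref{thm:centrality}, by reading off a contradiction from the strict sign of $f^\fR_W$ just before and just after $t^\star_\fR$. The paper's version is leaner: for the lower bound it evaluates the uniform estimate directly at $t=\tau^\star_n$ (using $W_n(\tau^\star_n)=0$) against $\min_{[0,t_1]}f^\fR_W=:C_1>0$ for any $t_1<t^\star_\fR$, rather than carving out $[\delta,t^\star_\fR-\epsilon]$ and separately agonizing over a neighbourhood of $0$. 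Your worry there is legitimate in principle but resolves exactly as you suspect (if $f^\fR_W(0)=\sum_x\mu_x d^-_x q_{x,0}=0$ the claim is degenerate; otherwise $f^\fR_W>0$ on all of $[0,t_1]$ by minimality of $t^\star_\fR$), so nothing extra is needed.

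Two points to fix. First, a genuine gap in your $t^\star_\fR=\infty$ case: you obtain only $\PP(\tau^\star_n\leq 1)\to 0$ and then assert $\tau^\star_n\top\infty$, which does not follow. The paper's (and the correct) argument is to apply the lower-bound step on $[0,T]$ for \emph{every} fixed $T>0$ — since $f^\fR_W$ has no zero there, $\min_{[0,T]}f^\fR_W>0$, hence $\PP(\tau^\star_n\leq T)\to 0$; letting $T$ run through all positive reals gives $\tau^\star_n\top\infty$. Your closing sentence about exhausting all half-edges and $\|\fR\|_{L^1}\geq\lambda$ is a distraction; it is not part of the proof. Second, your confusion about a "rescaled fraction clock on $[0,1]$" is a misreading: the time variable $t$ lives on $[0,\infty)$, and the $[0,1]$ in the displayed definition of $t^\star_\fR$ appears to be a typo (the paper's own proof speaks of continuity "on the interval $[0,\infty)$" and chooses $t_1$ arbitrarily large). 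Once you drop that misreading, your remarks about touch-points and graphical cases become unnecessary: only the minimality of the \emph{first} zero and the sign change of a \emph{stable} zero enter.
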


The proof of lemma is provided in Section~\ref{proof:lem:tau}.

We are now ready to provide the limit theorem about the final ruin probabilities. As a corollary of Theorem~\ref{thm:centrality} and Lemma~\ref{lem:tau} the following holds.

\begin{theorem}\label{thm:final}
Under Assumptions~\ref{cond-limit}-\ref{cond-avg}, and for any given loss intensity function $\cR_n$ satisfying Assumption~\ref{cond-R}, we have as $n\to\infty$:
\begin{itemize}
\item[(i)]  If $\int_0^{t^\star_\fR} \fR(s) ds=\lambda$, then asymptotically all agents are ruined by the end of the loss propagation process, i.e.
$$D^{(n)}(\tau^\star_n)=n-o_p(n).$$

\item[(ii)] If $t^\star_\fR<\infty$ is a stable solution of $f^{\fR}_W(t)=0$ and $\int_0^{t^\star_\fR} \fR(s) ds<\lambda$, then the ruin probability of an agent of type  $x\in\cX$ converges to
$$\frac{D^{(n)}_{x}(\tau^\star_n)}{n\mu_x^{(n)}} \top 1-\sum_{\theta=0}^{d_x^+} b(d^+_x,\phi^{\fR}(t),\theta)  \cS^{\fR}_{x,\theta}(t^\star_\fR),$$
 and the total number of ruined agents satisfies
 $$D^{(n)}(\tau^\star_n)=n\sum_{x\in\cX}\mu_x(1-\sum_{\theta=0}^{d^+_x}\cS^{\fR}_{x,\theta}(t^\star_\fR))+o_p(n).$$  

\item[(iii)] If $t^\star_\fR=\infty$ and $\|\fR\|_{L^1}<\lambda$,  then the ruin probability of an agent of type  $x\in\cX$ converges to
$$\frac{D^{(n)}_{x}(\tau^\star_n)}{n\mu_x^{(n)}} \top 1-\sum_{\theta=0}^{d_x^+} b\bigl(d_x^+,\|\fR\|_{L^1}/\lambda, \theta\bigr) \cS^{\fR}_{x,\theta}(\infty),$$
 and the total number of ruined agents satisfies
 $$D^{(n)}(\tau^\star_n)=n\sum_{x\in\cX}\mu_x(1-\sum_{\theta=0}^{d^+_x}b\bigl(d_x^+, \|\fR\|_{L^1}/\lambda, \theta\bigr) \cS^{\fR}_{x,\theta}(\infty))+o_p(n),$$  
where 
$\cS^{\fR}_{x,\theta}(\infty)$ denotes the limit of $\cS^{\fR}_{x,\theta}(t)$ as $t \to \infty$.

\end{itemize}

 \end{theorem}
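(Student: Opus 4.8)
The plan is to derive all three cases by combining the uniform convergence in Theorem~\ref{thm:centrality} with the identification of the stopping time $\tau^\star_n$ from Lemma~\ref{lem:tau}, and then evaluating the limiting fractions at the stopping time. First I would treat case (iii), the ``incomplete propagation'' regime $\|\fR\|_{L^1}<\lambda$ with $t^\star_\fR=\infty$. Here Lemma~\ref{lem:tau} gives $\tau^\star_n\top\infty$, while Lemma~\ref{lem:fraction} shows $\cI^\star_n/n\top\|\fR\|_{L^1}\wedge\lambda=\|\fR\|_{L^1}$, so the reveal process exhausts only a $\|\fR\|_{L^1}/\lambda$ fraction of the half-edges. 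The delicate point is that $\tau^\star_n\to\infty$ while we need to evaluate $S^{(n)}_{x,\theta}(t)/n$ at $t=\tau^\star_n$; I would argue that $\phi^\fR(t)\to\|\fR\|_{L^1}/\lambda$ and $\cS^\fR_{x,\theta}(t)\to\cS^\fR_{x,\theta}(\infty)$ as $t\to\infty$ (the latter limit exists by monotonicity, since $\tau_{x,\theta}(\epsilon_x,\bm L_x)$ is fixed and the conditioning event $\{U^{\fR,t}_{(i)}>\tau_{x,i}\}$ has a monotone limit as the support of $U^{\fR,t}$ stabilizes), and then pass to the limit inside the supremum in Theorem~\ref{thm:centrality}, using that the convergence is uniform in $t\le\tau^\star_n$. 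Summing $S^{(n)}_{x,\theta}(\tau^\star_n)/n$ over $\theta$ gives the survival probability of a type-$x$ agent, and $D^{(n)}_x/n\mu^{(n)}_x = 1 - S^{(n)}_x/(n\mu^{(n)}_x)$ gives the stated ruin probability; summing over $x$ against $\mu_x$ (with a standard truncation argument over the countable type space, controlled by Assumption~\ref{cond-avg}) gives the total count.

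Next I would handle case (ii), where $t^\star_\fR<\infty$ is a stable zero of $f^\fR_W$ and $\int_0^{t^\star_\fR}\fR(s)\,ds<\lambda$. By Lemma~\ref{lem:tau}, $\tau^\star_n\top t^\star_\fR$; combined with the uniform convergence $\sup_{t\le\tau^\star_n}|S^{(n)}_{x,\theta}(t)/n - \mu_x b(d^+_x,\phi^\fR(t),\theta)\cS^\fR_{x,\theta}(t)|\top 0$ and the continuity of $t\mapsto b(d^+_x,\phi^\fR(t),\theta)\cS^\fR_{x,\theta}(t)$ (continuity of $\phi^\fR$ from Assumption~\ref{cond-R}, continuity of $\cS^\fR_{x,\theta}$ which I would verify from \eqref{eqdef-beta} by dominated convergence in $t$), a composition/continuous-mapping step yields $S^{(n)}_{x,\theta}(\tau^\star_n)/n\top\mu_x b(d^+_x,\phi^\fR(t^\star_\fR),\theta)\cS^\fR_{x,\theta}(t^\star_\fR)$. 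Here $\int_0^{t^\star_\fR}\fR(s)\,ds<\lambda$ ensures $t^\star_\fR< t_\fR(\lambda)$, so $\phi^\fR(t^\star_\fR)=\int_0^{t^\star_\fR}\fR(s)\,ds/\lambda$ is the ``natural'' value and the $t\wedge t_\fR(\lambda)$ truncation in $\phi^\fR$ is inactive. Summing over $\theta$ and dividing by $\mu^{(n)}_x$ gives the ruin probability formula; summing $\mu_x(1-\sum_\theta\cS^\fR_{x,\theta}(t^\star_\fR))$ — note $b$ need not appear in the total count because, as in \cite{amini2022dynamic}, the $b$-weights telescope when summing the solvent fractions against $d^+_x$-free terms, a point I would make precise — gives the total.

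Finally, case (i): if $\int_0^{t^\star_\fR}\fR(s)\,ds=\lambda$ then $\phi^\fR(t^\star_\fR)=1$, so $f^\fR_S(t^\star_\fR)=\sum_x\mu_x\sum_\theta b(d^+_x,1,\theta)\cS^\fR_{x,\theta}(t^\star_\fR)=\sum_x\mu_x\cS^\fR_{x,d^+_x}(t^\star_\fR)$, and I would show this equals $0$: when $\phi^\fR=1$ the binomial mass concentrates on $\theta=d^+_x$, so every agent has received all $d^+_x$ incoming losses, and since $\int_0^{t^\star_\fR}\fR=\lambda$ means all $n\lambda^{(n)}$ half-edges are revealed — equivalently $\cI^\star_n=n\lambda^{(n)}$ by Lemma~\ref{lem:fraction} — the contagion has spread to everyone in the giant structure; more carefully, $f^\fR_W(t^\star_\fR)=0$ with $\phi^\fR(t^\star_\fR)=1$ forces the solvent-half-edge term to vanish, and a short argument (the survival events $\{U^{\fR,t}_{(i)}>\tau_{x,i}\}$ for $i=1,\dots,d^+_x$ become asymptotically negligible once every threshold time is exceeded, because an agent that withstood all its losses would contribute solvent half-edges) shows $f^\fR_S(t^\star_\fR)=0$, hence $D^{(n)}(\tau^\star_n)/n\top 1$. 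The main obstacle I anticipate is the interchange of limits in case (iii): making rigorous that one may evaluate the type-$\theta$ fraction at the random, diverging time $\tau^\star_n$ and replace it by the $t\to\infty$ limit requires either an explicit rate on how fast $\phi^\fR(t)$ and $\cS^\fR_{x,\theta}(t)$ approach their limits (from the tail decay of $\fR$ in Assumption~\ref{cond-R}) or an $\epsilon$-$\delta$ argument splitting $\{t\le M\}$ and $\{M<t\le\tau^\star_n\}$ and using monotonicity on the latter; the countable-type summation and the uniformity of all error terms over $x\in\cX$ is the secondary technical point, handled by Assumption~\ref{cond-avg} together with the bound $\sum_x\mu_x d^+_x<\infty$.
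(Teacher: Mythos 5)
Your overall route matches the paper's: use Lemma~\ref{lem:tau} to identify the limit of $\tau^\star_n$, combine with the uniform convergence in Theorem~\ref{thm:centrality} and the continuity/monotonicity of $t\mapsto\phi^\fR(t)$ and $t\mapsto\cS^\fR_{x,\theta}(t)$, and evaluate the limiting fractions at the stopping time. The paper's own proof is just a terse version of this, and your care about the $t\to\infty$ interchange of limits in case (iii) and the truncation over the countable type space are exactly the right points to make precise.

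There is, however, a genuine error in the way you treat the total count in case (ii). You claim that ``the $b$-weights telescope'' so that $\sum_{\theta=0}^{d^+_x} b\bigl(d^+_x,\phi^\fR(t^\star_\fR),\theta\bigr)\cS^\fR_{x,\theta}(t^\star_\fR)$ may be replaced by $\sum_{\theta=0}^{d^+_x}\cS^\fR_{x,\theta}(t^\star_\fR)$. No such identity exists: the binomial weight depends nontrivially on $\theta$ and does not cancel against anything in $\cS^\fR_{x,\theta}$ (the only telescoping available is $\sum_\theta b(d^+_x,\phi,\theta)=1$, which is of no use here), and the reference \cite{amini2022dynamic} contains no such cancellation either. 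Summing the per-type limit $D^{(n)}_x/(n\mu^{(n)}_x)\top 1-\sum_\theta b(d^+_x,\phi^\fR(t^\star_\fR),\theta)\cS^\fR_{x,\theta}(t^\star_\fR)$ against $\mu_x$ necessarily produces $D^{(n)}(\tau^\star_n)=n\sum_x\mu_x\bigl(1-\sum_\theta b(d^+_x,\phi^\fR(t^\star_\fR),\theta)\cS^\fR_{x,\theta}(t^\star_\fR)\bigr)+o_p(n)$, with the binomial factor intact; this is also what case (iii) of the statement gives with $\phi^\fR(t^\star_\fR)$ replaced by $\|\fR\|_{L^1}/\lambda$. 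The $b$-free display in case (ii) of the theorem is simply a typographical slip (the adjacent per-type formula even retains the $b$-factor), and the correct response is to flag it, not to fabricate a cancellation that would make the two formulas agree. A secondary remark: your derivation of $f^\fR_S(t^\star_\fR)=0$ in case (i) from $f^\fR_W(t^\star_\fR)=0$ and $\phi^\fR(t^\star_\fR)=1$ only forces $\cS^\fR_{x,d^+_x}(t^\star_\fR)=0$ for types with $d^-_x\geq 1$; types with $d^-_x=0$ are not controlled by $f^\fR_W$. The paper's one-line argument (``all outgoing half-edges are ruinous, hence all agents are ruined'') has the same blind spot, so you are not losing ground, but if you want a complete proof you would need to rule out a positive mass of zero-out-degree, solvent types or treat them separately.
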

 
 The proof of theorem is provided in Section~\ref{proof:thm:final}.

\subsection{Ruin probabilities for the networked risk processes}\label{sec:ruin}
We consider the networked risk processes of Section~\ref{sec:NRP} on the random graph $\cGn$. In this case the loss reveal intensity function is totally determined by the number of ruinous outgoing half-edges, namely $\cR_n(t)=\beta W_n(t)$. 

In the previous section we have assumed 
 that the loss intensity function $\cR_n$ has a limit function $\fR$ which satisfies Assumption \ref{cond-R}. 
 We will now show that for the networked risk processes of Section~\ref{sec:NRP}, Assumption \ref{cond-R} holds and  there exists a unique limit function $\fR^\star$. We will take advantage of Theorem~\ref{thm:final} to show that $\fR^\star$ can be characterized as a fixed point solution, representing the limit of remaining ruinous links. 
 
 To obtain this existence and uniqueness result for $\fR^\star$, we need to consider a second moment condition for the degrees of the random graph $\cGn$.

\begin{assumption}
\label{cond-moment}
We assume that, as $n\to \infty$, 
$\sum_{i\in[n]}(d^+_i+d^-_i)^2=O(n)$.
\end{assumption}

In particular, the above 
assumption implies (by uniform integrability) Assmption~\ref{cond-avg}  
 and $\lambda^{(n)} \to \lambda$ as $n\to \infty$. In this case, since $\liminf_{n\longrightarrow\infty}\mathbb{P}(\cGn \  \mbox{simple})>0$, our limit theorems could be transferred to the uniformly distributed random graph with these degree sequences $\cG_*^{(n)}(\bd_n^+,\bd_n^-)$, see e.g.,~\cite{hofstad16}. 

We have the following theorem.

\begin{theorem}\label{thm:main}
Let $\mathbb{L}_{\lambda}(\mathbb{R}^+)$ be the space of all continuous positive integrable functions $f$ with $\|f\|_1\leq \lambda$. Suppose that the loss reveal intensity satisfies $\cR_n(t)=\beta W_n(t)$ and the network sequence $\{\mathcal{G}_n\}_{n\in\NN}$ satisfies Assumptions~\ref{cond-limit} and ~\ref{cond-moment}. Then we have:
\begin{itemize}
\item[(i)] There exists a unique solution $\fR^\star$ in $\mathbb{L}_{\lambda}(\mathbb{R}^+)$ with an initial value $\fR^\star(0)=\beta \sum_{x\in\cX}\mu_x d^-_x(1-q_{x,0})$ to the fixed point equation
$\fR=\beta\Psi(\fR),$
where $\Psi:\mathbb{L}_{\lambda}(\mathbb{R}^+)\to \mathbb{L}_{\lambda}(\mathbb{R}^+)$ is the map
$$\Psi(\fR)(t)= \lambda(1- \phi^\fR(t)) - \sum_{x\in \cX} \mu_x d_x^-\sum_{\theta=0}^{d_x^+}b(d^+_x,\phi^{\fR}(t),\theta) \cS^{\fR}_{x,\theta}(t).$$
\item[(ii)] As $n\to\infty$, we have
 \begin{align*}
\sup\limits_{t\leq \tau^\star_n}\bigl|\frac{\beta W_n(t)}{n}-\fR^\star(t)\bigr|\top 0,
\end{align*}
and consequently, 
\begin{align*}
\sup\limits_{t\leq \tau^\star_n}\bigl|\frac{S^{(n)}(t)}{n}-f^{\fR^\star}_S(t)\bigr|\top 0 & \ \ \text{and} \ \ \  \sup\limits_{t\leq \tau^\star_n}\bigl|\frac{D^{(n)}(t)}{n}-f^{\fR^\star}_D(t)\bigr|\top 0.
\end{align*}
\end{itemize}
\end{theorem}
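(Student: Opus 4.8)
The plan is to establish the two parts separately, obtaining existence and uniqueness of $\fR^\star$ first (part (i)) and then deducing the convergence statements in part (ii) from Theorem~\ref{thm:centrality} together with a continuity/contraction argument. For part (i), I would work in the complete metric space $\mathbb{L}_\lambda(\RR^+)$ equipped with a suitable norm --- either the sup norm on a finite interval $[0,t_\fR(\lambda)]$ or, more robustly, a weighted norm $\|f\|_\kappa := \sup_{t\ge 0} e^{-\kappa t}|f(t)|$ for $\kappa$ large --- and show that the map $\fR \mapsto \beta\Psi(\fR)$ is well-defined on this space and is a contraction. Well-definedness amounts to checking that $\Psi(\fR)$ is continuous, nonnegative, and has $L^1$-norm at most $\lambda$; nonnegativity is exactly the statement that $f^\fR_W(t)\ge 0$ interpreted correctly (it is the asymptotic count of remaining ruinous half-edges, hence nonnegative up to the stopping time), and the $L^1$ bound follows since the number of reveals never exceeds $n\lambda^{(n)}$. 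Continuity of $t\mapsto \Psi(\fR)(t)$ follows from continuity of $\phi^\fR$, of $\alpha_x$, and of the $\cS^\fR_{x,\theta}$ in $t$, the latter because the order-statistics distribution \eqref{eq:U} depends continuously on $t$ and on $\fR$ through $\phi^\fR$.

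The heart of part (i) is the Lipschitz estimate: I would bound $\|\beta\Psi(\fR_1) - \beta\Psi(\fR_2)\|$ in terms of $\|\fR_1 - \fR_2\|$. The key observation is that $\Psi(\fR)$ depends on $\fR$ only through the scalar (in $t$) quantity $\phi^\fR(t) = \frac{1}{\lambda}\int_0^{t\wedge t_\fR(\lambda)}\fR(s)\,ds$ and through the order statistics $U^{\fR,t}_{(i)}$, whose law again depends on $\fR$ only through $\phi^\fR$ (the law of $U^{\fR,t}_i$ is that of $\phi^\fR$ evaluated appropriately). So it suffices to control $\phi^{\fR_1} - \phi^{\fR_2}$, which is an integral of $\fR_1 - \fR_2$ and hence contracts by a factor like $\beta t$ (or $\beta/\kappa$ in the weighted norm) after composing with $\beta\Psi$. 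The binomial weights $b(d_x^+,\phi,\theta)$ and the survival probabilities $\cS^\fR_{x,\theta}$ are smooth (Lipschitz) functions of $\phi$ on $[0,1]$, uniformly in $x,\theta$ --- here Assumption~\ref{cond-moment} enters to guarantee the sums over $x$ and over $\theta\le d_x^+$ converge and can be differentiated/bounded term by term, since $\sum_x \mu_x d_x^- d_x^+ < \infty$. Choosing $\kappa$ large enough (or the interval short enough and then iterating) makes $\beta\Psi$ a strict contraction, and Banach's fixed point theorem yields the unique $\fR^\star$. The stated initial value $\fR^\star(0) = \beta\sum_x \mu_x d_x^-(1-q_{x,0})$ is read off by evaluating $\Psi(\fR^\star)(0)$: at $t=0$ one has $\phi^{\fR}(0)=0$, so $b(d_x^+,0,\theta) = \ind\{\theta=0\}$ and $\cS^\fR_{x,0}(0) = 1-q_{x,0}$, giving $\Psi(\fR^\star)(0) = \lambda - \sum_x \mu_x d_x^-(1-q_{x,0})$; multiplying by $\beta$ and recalling $\lambda = \sum_x \mu_x d_x^-$ yields the claim.

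For part (ii), the strategy is to feed the fixed-point function $\fR^\star$ back into Theorem~\ref{thm:centrality}. First I must verify that the \emph{actual} loss reveal intensity $\cR_n(t) = \beta W_n(t)$ satisfies Assumption~\ref{cond-R} with limit $\fR^\star$. The obstacle is that Assumption~\ref{cond-R} is a hypothesis about $\cR_n$, but here $\cR_n$ is endogenous --- it is $\beta$ times the very process $W_n$ whose limit we want to identify. To break this circularity I would argue as follows: along any subsequence, $\frac{1}{n}W_n(\cdot)$ is tight (it is monotone-ish and bounded by $\lambda^{(n)}$, and one can get equicontinuity from the bounded-increments structure of the reveal process), so by a subsequential-limit argument there is a (random, a priori) limit $w(t)$. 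Conditionally on $\cR_n = \beta W_n$ converging to $\beta w$ along that subsequence, Theorem~\ref{thm:centrality} applies with $\fR = \beta w$ and forces $\frac{1}{n}W_n(t) \to f^{\beta w}_W(t)$, i.e. $w = f^{\beta w}_W$, which means $\beta w$ solves the fixed point equation $\fR = \beta\Psi(\fR)$. By the uniqueness from part (i), $\beta w = \fR^\star$, so every subsequential limit is the same deterministic function, giving $\frac{\beta W_n}{n} \to \fR^\star$ in probability, uniformly in $t\le\tau^\star_n$. The main technical obstacle is making this tightness-plus-identification argument rigorous while carrying the uniform-in-$t$ (up to $\tau_n^\star$) control and the $o_p(n)$ error terms through --- in particular one has to check that the $o_p(n)$ slack in Assumption~\ref{cond-R} is self-consistent, i.e. that $\beta W_n(t) = n\fR^\star(t) + o_p(n)$ is not an additional assumption but a conclusion. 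Once $\frac{\beta W_n}{n}\to\fR^\star$ is established, the statements for $S^{(n)}/n$ and $D^{(n)}/n$ are immediate: apply Theorem~\ref{thm:centrality} with $\fR = \fR^\star$, for which $f^{\fR^\star}_W \equiv$ the limit of $W_n/n$ is automatically consistent, and read off $\sup_{t\le\tau_n^\star}|S^{(n)}(t)/n - f^{\fR^\star}_S(t)| \top 0$ and likewise for $D^{(n)}$.
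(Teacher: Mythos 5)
Your proposal follows essentially the same strategy as the paper, and the key technical content is the same in both parts; the differences are in how each part is dressed up.

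For part (i), the paper proves a \emph{pointwise} Lipschitz bound of the form $|\Psi(\fR_1)(t)-\Psi(\fR_2)(t)|\le C_0\|\fR_1-\fR_2\|_t$ (truncated $L^1$ norm on the right), then runs a Picard iteration $h^{(n)}=\beta\Psi(h^{(n-1)})$ and gets a factorial decay $\|h^{(n)}-h^{(n-1)}\|_t\le \tfrac{C}{(n-1)!}(\beta C_0)^{n-1}t^{n-1}$, with uniqueness via Gronwall. Your Bielecki-type weighted norm $\|f\|_\kappa=\sup_t e^{-\kappa t}|f(t)|$ plus Banach's fixed point theorem is the textbook-equivalent packaging of the same Volterra-type contraction: one trades the explicit iteration for a single choice of $\kappa$. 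Both hinge on the identical Lipschitz estimate, and you correctly identify that $\Psi$ depends on $\fR$ only through $\phi^\fR$ and the $U^{\fR,t}$ law (itself a functional of $\phi^\fR$), that the binomial factors are smooth in $\phi$, and that Assumption~\ref{cond-moment} is needed to sum the resulting $d_x^- d_x^+$ factors over $x$. Your reading of the initial value by evaluating $\Psi(\fR^\star)(0)$ matches the paper. One small caution: the claim that $\Psi$ maps $\mathbb{L}_\lambda$ into itself (in particular integrability of $\Psi(\fR)$ when $\|\fR\|_{L^1}<\lambda$) is asserted both by you and, implicitly, the paper via the probabilistic interpretation of $f^\fR_W$; neither treatment makes this fully explicit, so it is not a gap in your proposal relative to the paper.

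For part (ii), your approach is if anything more careful than the paper's. The paper breaks the circularity tersely --- it writes ``Suppose that the limit process of $W_n(t)$ exists and satisfies the conditions outlined in Theorem~\ref{thm:centrality}'' and then identifies the limit as $\fR^\star$ via uniqueness, without explicitly arguing for existence of that limit. Your tightness-plus-subsequential-identification argument (tightness of $W_n/n$, every subsequential limit $w$ must satisfy $w=f^{\beta w}_W$ by Theorem~\ref{thm:centrality}, hence $\beta w=\fR^\star$ by part (i), hence the full sequence converges) is a legitimate and standard way to make this precise, and you correctly flag the remaining work: establishing C-tightness for $W_n/n$ (the jumps of size $d_i^-/n$ must be controlled, which Assumption~\ref{cond-moment} provides in expectation) and showing that the subsequential limit is deterministic so that Assumption~\ref{cond-R} applies. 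Given that this is exactly the issue the paper glosses over, your proposal is a faithful --- and somewhat more scrupulous --- rendition of the intended argument.
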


The proof of the theorem is provided in Section~\ref{sec:proof_main}.

We also have the following lemma which guarantees that $\tau^\star_n\top \infty$ in this model.

\begin{lemma}\label{lem:tau_W}
Let all assumptions in Theorem~\ref{thm:main} hold and $\fR^\star(0)=\beta \sum_{x\in\cX}\mu_x d^-_x(1-q_{x,0})>0$. Then we have $\tau^\star_n> (1-\epsilon) \log n$ with high probability for any $\epsilon>0$.
\end{lemma}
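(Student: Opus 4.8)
The plan is to show that the loss reveal process, when started from a strictly positive fraction of fundamentally ruined agents, does not terminate before time $(1-\epsilon)\log n$ with high probability. The key point is that the reveal intensity is $\cR_n(t)=\beta W_n(t)$, so as long as $W_n(t)$ is of order $n$, reveals occur at rate of order $n$; each reveal consumes exactly one ruinous outgoing half-edge but, with positive probability, creates new ruinous half-edges by ruining a fresh agent. Since $\fR^\star(0)=\beta\sum_{x}\mu_x d_x^-(1-q_{x,0})>0$, Theorem~\ref{thm:main}(ii) already gives $\beta W_n(t)/n \top \fR^\star(t)$ uniformly for $t\leq\tau_n^\star$, and since $\fR^\star$ is continuous, positive at $0$, and integrable (so $\|\fR^\star\|_1\le\lambda$), there is a time horizon $t_1>0$ (independent of $n$) on which $\fR^\star(t)\ge c>0$ for some constant $c$. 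On $[0,t_1]$, then, $W_n(t)\ge (c/2\beta)n$ with high probability, hence $\tau_n^\star>t_1$ whp --- but this only gives a constant lower bound, not $(1-\epsilon)\log n$. The logarithmic improvement must come from the \emph{rate} at which the exponential clocks fire: when $W_n(t)$ is order $n$, the waiting time between successive reveals is $\mathrm{Exp}(\beta W_n(t))$, which is order $1/n$; but the \emph{process ends} only after $W_n$ hits $0$, and along the way the cumulative real time elapsed to reveal $k$ half-edges is a sum of such exponentials.

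Concretely, I would argue as follows. Let $k_n := \lceil \eta n\rceil$ for a small constant $\eta>0$ chosen so that (by Theorem~\ref{thm:centrality}, or Lemma~\ref{lem:fraction}) revealing the first $k_n$ ruinous half-edges does not exhaust the process, i.e. $W_n$ stays positive and of order $n$ throughout the first $k_n$ reveals whp; this is possible because $f_W^{\fR^\star}(t)>0$ on $[0,t_1]$ and $\int_0^{t_1}\fR^\star(s)\,ds<\lambda$ when $t_1$ is small. Let $0=\sigma_0<\sigma_1<\cdots$ be the successive reveal times, so $\tau_n^\star\ge \sigma_{k_n}$ on the event that the process survives $k_n$ reveals. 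Conditionally on the discrete data up to reveal $j$, the increment $\sigma_{j+1}-\sigma_j$ is exponential with parameter $\cR_n(\sigma_j)=\beta W_n(\sigma_j)$. On the good event $W_n(\sigma_j)\le Cn$ for all $j\le k_n$ (which holds whp, say because $W_n$ is bounded by $n\lambda^{(n)}$ minus reveals, or more tightly by the concentration in Theorem~\ref{thm:main}), we have $\sigma_{k_n}\ge \sum_{j=0}^{k_n-1} E_j$ where the $E_j$ are independent (given the filtration) with $E_j\succeq \mathrm{Exp}(\beta C n)$. Hence $\sigma_{k_n}$ stochastically dominates a sum of $k_n=\eta n$ i.i.d.\ $\mathrm{Exp}(\beta C n)$ variables, whose mean is $\eta/(\beta C)$ --- again only constant order. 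So this crude coupling is still not enough, and the real mechanism for the $\log n$ must be different: it is that $W_n$ does \emph{not} stay order $n$; rather, once contagion has run its course, $W_n(t)$ decays, and the tail of the process --- the last few half-edges --- is revealed when $\cR_n(t)=\beta W_n(t)$ is \emph{small} (order $1$ or $\log n$), so the final waiting times are order $1$ and there are $\Theta(\log n)$ of them before hitting zero.

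Thus the correct route, which I would pursue, is: (1) Run the process until $W_n$ first drops below $\epsilon' n$ for small $\epsilon'$; by Lemma~\ref{lem:tau_W}'s hypothesis and Theorem~\ref{thm:main}, if $t_\fR^\star=\infty$ (which holds because $\|\fR^\star\|_1\le\lambda$ and, by Theorem~\ref{thm:final}(iii), the process never terminates in finite limiting time), $W_n(t)/n\to f_W^{\fR^\star}(t)$ stays positive for all fixed $t$, so in fact $W_n(t)\ge \epsilon'' n$ whp for every fixed $t$, already yielding $\tau_n^\star>T$ whp for every constant $T$. (2) To upgrade a constant to $(1-\epsilon)\log n$, show that $f_W^{\fR^\star}(t)$ decays at most exponentially: since $\phi^{\fR^\star}(t)=\frac1\lambda\int_0^t\fR^\star(s)\,ds$ and $\fR^\star=\beta\Psi(\fR^\star)$ with $\Psi(\fR)(t)\le \lambda(1-\phi^\fR(t))\le \lambda$, a Grönwall-type estimate gives $\fR^\star(t)\le \fR^\star(0)e^{\beta t}$ is the wrong direction --- instead one bounds $\|\fR^\star\|_1-\int_0^t\fR^\star$ below, equivalently $f_W^{\fR^\star}(t)\ge (\text{const})e^{-\beta t}$, by noting each surviving agent still carries all $d_x^-$ of its outgoing half-edges as potential future ruinous links and the depletion rate of $W_n$ is at most $\beta W_n$ itself. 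Then for $t=(1-\epsilon)\log n$ we get $f_W^{\fR^\star}(t)\ge n^{-(1-\epsilon)\beta}\cdot(\text{const})$, so $W_n(t)\ge n^{\epsilon\beta}\cdot(\text{const}) > 0$ whp by the concentration in Theorem~\ref{thm:main} applied on the growing horizon $t\le(1-\epsilon)\log n$ --- which is legitimate since Theorem~\ref{thm:main}(ii) gives uniform convergence on $[0,\tau_n^\star]$. Hence $\tau_n^\star>(1-\epsilon)\log n$ whp.

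The main obstacle I anticipate is making the exponential lower bound $f_W^{\fR^\star}(t)\gtrsim e^{-\beta t}$ rigorous and checking that the uniform convergence $\sup_{t\le\tau_n^\star}|\beta W_n(t)/n-\fR^\star(t)|\top 0$ is strong enough to conclude $W_n(t)>0$ when the target $\fR^\star(t)$ is only polynomially small in $n$ --- a convergence in probability of the \emph{rescaled} quantity does not immediately control $W_n$ at the scale $n^{\epsilon\beta}$. One likely needs either a quantitative (concentration-with-rate) version of Theorem~\ref{thm:main}, or a separate direct martingale/supermartingale argument on $W_n$ itself: e.g., show that $W_n$ dominates a pure-death-like chain that decreases by at most $1$ per step at rate $\beta W_n$ and increases with probability bounded below, so that $\log W_n$ is a supermartingale with bounded-below increments, and apply Azuma/optional stopping to show it takes $\Omega(\log n)$ time in expectation with concentration to reach $0$. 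I would develop this direct argument as the backbone of the proof, using Theorem~\ref{thm:main} only to certify the process is in the supercritical regime $\fR^\star(0)>0$.
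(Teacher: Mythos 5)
Your proposal correctly diagnoses why a naive concentration argument fails, but the route you ultimately sketch (a supermartingale/Azuma argument on $\log W_n$, preceded by attempts to establish $f^{\fR^\star}_W(t)\gtrsim e^{-\beta t}$) is both incomplete and more complicated than necessary, and the intermediate attempt has a concrete error. The paper's proof is a single, clean stochastic-domination argument that you circle around without landing on.

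Two specific problems with your proposed route. First, the arithmetic in step (2) does not work for general $\beta$: from $f^{\fR^\star}_W(t)\ge c\,e^{-\beta t}$ at $t=(1-\epsilon)\log n$ you would get $nf^{\fR^\star}_W(t)\gtrsim n^{1-(1-\epsilon)\beta}$, not $n^{\epsilon\beta}$; this exponent is negative whenever $\beta>1/(1-\epsilon)$, so the bound yields nothing, and in any case (as you note) the convergence in probability of $W_n/n$ to $f^{\fR^\star}_W$ is far too weak to certify $W_n>0$ at polynomially small scales. Second, the supermartingale backbone you suggest is never carried out, and one would still have to control the increments of $\log W_n$ near $W_n=O(1)$, which is exactly the delicate regime.

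The observation that makes this lemma one line is the following coupling, which sidesteps the trajectory of $W_n$ entirely. Each initially ruinous outgoing half-edge carries an independent $\mathrm{Exp}(\beta)$ clock, and the loss reveal process cannot terminate before every one of those $\alpha_n n$ initial clocks has fired (new ruins can only add more clocks, never remove the need to reveal the original ones). Hence $\tau^\star_n\ge_{\mathrm{st}}\Lambda_n:=\max_{k\le \alpha_n n}E_k$ with $E_k$ i.i.d.\ $\mathrm{Exp}(\beta)$. By the R\'enyi representation, $\Lambda_n\stackrel{d}{=}\sum_{k=1}^{\alpha_n n}T_k$ with $T_k\sim\mathrm{Exp}\bigl(\beta(\alpha_n n-k+1)\bigr)$ independent; since $\fR^\star(0)>0$ forces $\liminf_n\alpha_n>0$, the expectation is a harmonic sum of order $\log n$. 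Markov's inequality applied to $e^{-\Lambda_n}$ (i.e., $\PP(\Lambda_n\le(1-\epsilon)\log n)\le n^{1-\epsilon}\prod_k\EE e^{-T_k}$) then gives a polynomially small failure probability, exactly the paper's computation. Your instinct that a ``direct argument on $W_n$'' is needed was right, but the domination by a pure death chain requires no martingale machinery, no reference to $f^{\fR^\star}_W$, and no quantitative strengthening of Theorem~\ref{thm:main}; the only input from the hypothesis is $\liminf_n\alpha_n>0$.
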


As a direct consequence of Theorem~\ref{thm:final} and Proposition~\ref{lem:tau_W}, we have $\tau^\star_n\top \infty$ as $n\to\infty$, and thus the final state belongs to point $(ii)$ in Theorem~\ref{thm:main}. In this case, $\cR_n(t)=\beta W_n(t)=n\fR^\star+o_p(n)$. If $\|\fR^\star\|_{L^1}=\lambda$, then asymptotically all agents become ruined during the cascade. Otherwise, if $\|\fR^\star\|_{L^1}<\lambda$, for any type $x\in\cX$, the ruin probability of agent of type $x$ is 
 $$\frac{D^{(n)}_{x,\theta}(\tau^\star_n)}{n\mu_x^{(n)}}=1-\sum_{\theta=0}^{d^+_x}b(d^+_x,\|\fR^\star\|_{L^1}/\lambda,\theta) \cS^{\fR^\star}_{x,\theta}(\infty)+o_p(1),$$
 and the total number of ruined agents satisfies
 $$D^{(n)}(\tau^\star_n)=n\sum_{x\in\cX}\mu_x(1-\sum_{\theta=0}^{d^+_x}b(d^+_x,\|\fR^\star\|_{L^1}/\lambda,\theta)\cS^{\fR^\star}_{x,\theta}(\infty))+o_p(n).$$

\section{Proofs}\label{sec:proofs}

Before proving our main theorems, we will introduce some preliminary definitions and lemmas.

\subsection{Some preliminary results}
We begin by asserting the following claim. The proof is straightforward and we omit it.

\begin{claim}\label{rq:GC}
 Let $\bm{U}^\fR_x=(U^\fR_{x,1}, \dots, U^\fR_{x,d_x^+})$ be a vector of $d^+_x$ i.i.d. random variables with common distribution (similiar to Equation~\ref{eq:U} replacing $t$ by $t_{\fR}(\lambda)$)
$$\PP(U^\fR_{x,i}\leq y)=\frac{\int_0^y \fR(s)ds}{\int_0^{t_{\fR}(\lambda)}\fR(s)ds}, \ y\leq t_{\fR}(\lambda), i=1, \dots, d_x^+,$$
for all $x\in\cX$. Then $U^{\fR,t}_{(1)},U^{\fR,t}_{(2)},\ldots,U^{\fR,t}_{(k)}$ have the same distribution as that of the first $k$ order statistics of $\bm{U}^\fR_x$ conditioned on $t\in [U^\fR_{x,(k)}, U^\fR_{x,(k+1)})$, where $U^\fR_{x,(1)}\leq U^\fR_{x,(2)} \leq \dots U^\fR_{x,(d_x^+)} $ are the order statistics of elements of $\bm{U}^\fR_x$. Thus the probability measure of $U^{\fR,t}_{(k)}$, for all $k=1, \dots, d^+_x$ and $t\geq 0$, can be generated by the vector $\bm{U}^\fR_x$.
\end{claim}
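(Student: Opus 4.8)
The claim asserts a distributional identity between the order statistics $U^{\fR,t}_{(1)},\dots,U^{\fR,t}_{(k)}$ defined through \eqref{eq:U} and the first $k$ order statistics of the fixed vector $\bm{U}^\fR_x$ conditioned on the event that exactly $k$ of its coordinates fall below $t$. My plan is to verify this by a direct computation of joint densities. First I would write down the joint density of $(U^\fR_{x,(1)},\dots,U^\fR_{x,(d_x^+)})$, which is $d_x^+!\,\bigl(\int_0^{t_\fR(\lambda)}\fR(s)ds\bigr)^{-d_x^+}\prod_{i=1}^{d_x^+}\fR(u_i)$ on the ordered simplex in $[0,t_\fR(\lambda)]^{d_x^+}$. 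Then I would condition on the event $\{U^\fR_{x,(k)}\le t < U^\fR_{x,(k+1)}\}$: by the law of total probability, integrating out the coordinates $u_{k+1},\dots,u_{d_x^+}$ over the region $t < u_{k+1} < \cdots < u_{d_x^+} < t_\fR(\lambda)$ produces a factor $\frac{1}{(d_x^+-k)!}\bigl(\int_t^{t_\fR(\lambda)}\fR(s)ds\bigr)^{d_x^+-k}$, while the probability of the conditioning event itself is $\binom{d_x^+}{k}\bigl(\int_0^t\fR(s)ds/\int_0^{t_\fR(\lambda)}\fR(s)ds\bigr)^k\bigl(\int_t^{t_\fR(\lambda)}\fR(s)ds/\int_0^{t_\fR(\lambda)}\fR(s)ds\bigr)^{d_x^+-k}$.

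Dividing, the normalization involving $t_\fR(\lambda)$ and the $\int_t^{t_\fR(\lambda)}\fR$ factors cancel, leaving exactly $k!\,\bigl(\int_0^t\fR(s)ds\bigr)^{-k}\prod_{i=1}^k\fR(u_i)$ on the ordered simplex in $[0,t]^k$. This is precisely the joint density of the order statistics of $k$ i.i.d. draws from the distribution \eqref{eq:U}, as recorded in the Remark following \eqref{eq:U}. Hence the two laws agree, and in particular one can realize the whole family $\{U^{\fR,t}_{(k)}\}_{k,t}$ on a single probability space by first sampling $\bm{U}^\fR_x$ and then reading off its order statistics — this is the ``Glivenko--Cantelli''-style coupling the claim's label alludes to, which is what makes the later uniform-in-$t$ arguments possible.

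The only mild subtlety — and the one place I would be careful — is the boundary behaviour: when $k=d_x^+$ the conditioning event is simply $\{U^\fR_{x,(d_x^+)}\le t\}$ and the $\int_t^{t_\fR(\lambda)}$ factors are absent, and when $t\ge t_\fR(\lambda)$ one must note that $\PP(U^\fR_{x,i}\le y)$ is already $1$, so $t\in[U^\fR_{x,(d_x^+)},\infty)$ almost surely and the identification is trivial. One should also observe that \eqref{eq:U} only defines the conditional law for $y\le t$, consistently with the support of the order statistics of $\bm{U}^\fR_x$ restricted to $[0,t]$; no mass escapes because $\fR>0$ guarantees $\int_0^t\fR(s)ds>0$ for every $t>0$. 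Since the paper states the proof is straightforward and omitted, I would keep the write-up to this density computation plus the two boundary remarks.

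Overall I do not anticipate a genuine obstacle here; the ``hard part'' is purely bookkeeping — keeping the combinatorial factors $\binom{d_x^+}{k}$, $k!$, and $(d_x^+-k)!$ straight through the conditioning, and making sure the cancellation of the $t_\fR(\lambda)$-normalizations is displayed cleanly. The conceptual content is just the elementary fact that the order statistics of an i.i.d. sample, conditioned on the count below a level, are themselves order statistics of i.i.d. samples from the truncated distribution.
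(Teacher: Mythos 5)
Your proof is correct. The paper omits the argument entirely (stating it is ``straightforward''), and your direct computation with joint densities — writing the ordered density $d_x^+!\,(\int_0^{t_\fR(\lambda)}\fR)^{-d_x^+}\prod_i\fR(u_i)$, integrating out the top $d_x^+-k$ coordinates, dividing by the binomial probability of the conditioning event, and checking the cancellation leaves $k!\,(\int_0^t\fR)^{-k}\prod_{i=1}^k\fR(u_i)$ — is exactly the intended verification; the boundary cases you flag ($k=d_x^+$ and $t\ge t_\fR(\lambda)$) are also handled correctly.
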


Let $\bm{L}_x$ be a vector of $d^+_x$ i.i.d. random losses to an agent of type $x$ and $\bm{L}_{x,k}$ be the subvector of first $k$ positions. From~\eqref{eqdef-beta}, $\cS^\fR_{x,\theta}(t)$ can be regarded as $\PP_x H_{x,\theta,t}$, where $H_{x,\theta,t}: [0,+\infty)\times(\RR^+)^{d^+_x}\times[0,+\infty)^{d^+_x}\mapsto \RR $ is a measurable function defined as
$$H_{x,\theta,t}(\epsilon_x,\bm{\ell}_x,\bm{u}_x):=\ind\{u_{(k)}\leq t<u_{(k+1)}\}\ind\{\tau_{x,0}(\epsilon_x,\bm{\ell}_x)=0,u_{(1)}>\tau_{x,1}(\epsilon_x,\bm{\ell}_x),\dots,u_{(\theta)}>\tau_{x,\theta}(\epsilon_x,\bm{\ell}_x)\},$$
where $u_{(1)},u_{(2)},\ldots,u_{(d^+_x)}$ is the order statistics of $\bm{u}_x$ and $\PP_x$ is the probability measure on $[0,+\infty)\times(\RR^+)^{d^+_x}\times[0,+\infty)^{d^+_x}$ generated by $(\epsilon_x,\bm{L}_x,\bm{U}^\fR_x)$.

Let us define the class of functions $\mathcal{H}_{x,\theta}$ as the collection of $H_{x,\theta,t}$ for all $t\in\RR$, i.e.,
$\mathcal{H}_{x,\theta}:=\{H_{x,\theta,t}, t\geq 0\}.$
We will show that for each $x\in\cX$ and $\theta\leq d^+_x$, the class $\mathcal{H}_{x,\theta}$ is a Glivenko-Cantelli class with respect to $\PP_x$ defined as follows.

\begin{definition}
A class of functions (or sets) $\mathcal{H}$ is called a strong Glivenko-Cantelli class with respect to a probability measure $\PP$ if 
$$\sup_{H\in\mathcal{H}}|\PP_nH-\PP H| \stackrel{a.s.}{\longrightarrow} 0, \quad \quad \text{as} \ \  n\to \infty,$$
where $\PP_n$ is the empirical measure of $\PP$.
\end{definition}

We also need to introduce the following definitions. 

For a collection of subsets of $\Omega$, denoted by $\mathcal{H}$, with $\Omega$ being a space (usually a sample space), the $n$-th shattering coefficient of $\mathcal{H}$ is defined by 
$$S_\mathcal{H}(n):=\max_{x_1,\ldots,x_n\in\Omega} \mbox{card}\{\{x_1,\ldots,x_n\}\cap\cA,\cA\in \mathcal{H}\},$$
where $\mathrm{card}\{.\}$ denotes the cardinality of the set. Then the Vapnik–Chervonenkis dimension (or VC dimension) of $\mathcal{H}$ is defined as
$${\rm VC}(\mathcal{H}):=\max \{n\geq 1: S_\mathcal{H}(n)=2^n\}.$$
We need the following Glivenko-Cantelli theorem for finite VC classes; see e.g.~\cite{Dudley1991GC} for more details.

\begin{theorem}[Glivenko-Cantelli theorem for finite VC classes]\label{thm:GC}
Let $\PP$ be a probability measure on a Polish space $\Omega$ and $\mathcal{H}$ be a collection of subsets of $\Omega$ with ${\rm VC}(\mathcal{H})<\infty$. Then $\mathcal{H}$ is a strong Glivenko-Cantelli class with respect to $\PP$.
\end{theorem}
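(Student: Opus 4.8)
The plan is to run the classical machinery: symmetrization, the combinatorial Sauer--Shelah bound on the shattering coefficient, a finite maximal inequality, and finally an upgrade from convergence in mean to almost sure convergence. Throughout, identify each set $A\in\mathcal{H}$ with its indicator $\ind_A$, write $G_n:=\sup_{A\in\mathcal{H}}|\PP_nA-\PP A|$ for i.i.d. samples $X_1,\dots,X_n$ from $\PP$, and work with outer expectations so that the measurability of the suprema on the Polish space $\Omega$ is not an issue (under the standard admissibility convention for $\mathcal{H}$).

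First I would symmetrize: introducing an independent ghost sample and i.i.d. Rademacher signs $\sigma_1,\dots,\sigma_n$ independent of the data,
$$\EE\,G_n\ \le\ 2\,\EE\,\sup_{A\in\mathcal{H}}\Bigl|\frac1n\sum_{i=1}^n\sigma_i\ind_A(X_i)\Bigr|.$$
Conditionally on $X_1,\dots,X_n$, the vector $(\ind_A(X_1),\dots,\ind_A(X_n))$ takes at most $S_\mathcal{H}(n)$ distinct values as $A$ ranges over $\mathcal{H}$, so the conditional supremum is a maximum of at most $2S_\mathcal{H}(n)$ signed Rademacher averages of $0/1$ vectors; Massart's finite maximal inequality then yields
$$\EE\,\sup_{A\in\mathcal{H}}\Bigl|\frac1n\sum_{i=1}^n\sigma_i\ind_A(X_i)\Bigr|\ \le\ \sqrt{\frac{2\log\bigl(2S_\mathcal{H}(n)\bigr)}{n}}.$$
Invoking the Sauer--Shelah lemma, ${\rm VC}(\mathcal{H})=d<\infty$ implies $S_\mathcal{H}(n)\le\sum_{k=0}^d\binom nk\le (n+1)^d$, whence $\EE\,G_n\le 2\sqrt{2\bigl(\log 2+d\log(n+1)\bigr)/n}\to 0$.

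It remains to strengthen this to almost sure convergence. I would note that $G_n$ has the bounded-differences property---replacing one coordinate $X_i$ alters $G_n$ by at most $1/n$---so McDiarmid's inequality gives $\PP\bigl(|G_n-\EE G_n|>t\bigr)\le 2e^{-2nt^2}$ for every $t>0$; combined with $\EE G_n\to0$ and the Borel--Cantelli lemma this yields $G_n\to0$ almost surely, which is exactly the strong Glivenko--Cantelli property. (Alternatively, this last step can be carried out via the Hewitt--Savage $0$--$1$ law together with the subadditivity of $n\mapsto n\,\EE G_n$.)

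The main obstacle is the Sauer--Shelah lemma itself: it is the sole point at which the hypothesis ${\rm VC}(\mathcal{H})<\infty$ enters, and it is what converts finite VC dimension into polynomial---rather than exponential---growth of $S_\mathcal{H}(n)$, without which the union bound underlying the maximal inequality would be vacuous; its proof is a separate combinatorial induction (a shifting/compression argument in $n$ and $d$). A secondary technical matter, handled by the admissibility convention and outer probability noted above, is the measurability of $G_n$ and of the symmetrized suprema; on a Polish space this is routine and does not affect the conclusion.
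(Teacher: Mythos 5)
Your proof is correct and is the standard Vapnik--Chervonenkis argument: symmetrization with a ghost sample and Rademacher signs, Massart's finite maximal inequality applied conditionally to the at most $S_\mathcal{H}(n)$ distinct restriction patterns, the Sauer--Shelah lemma to convert finite VC dimension into polynomial growth of $S_\mathcal{H}(n)$ so that $\EE\,G_n\to 0$, and finally McDiarmid's inequality plus Borel--Cantelli (or, as you note, reverse submartingale/Hewitt--Savage arguments) to upgrade convergence in mean to almost sure convergence, with the measurability issues on the Polish space handled via outer probability and the usual admissibility convention. However, you should be aware that the paper does not prove this theorem at all: it states it as a known result and refers the reader to Dudley (1991) for the proof. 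There is therefore no proof in the paper to compare your argument against; you have supplied a complete, self-contained proof of a theorem the authors invoke as a black box. Your exposition is what one would find in a standard modern reference on empirical process theory, and there is nothing to criticize beyond the fact that the authors did not intend for the reader to prove this lemma from scratch.
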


As a result of above theorem, we have the following lemma.

\begin{lemma}\label{lem:GC}
For each $x\in\cX$ and $\theta=0,1, \dots, d^+_x$, $\mathcal{H}_{x,\theta}$ is a strong Glivenko-Cantelli class with respect to $\PP_x$.
\end{lemma}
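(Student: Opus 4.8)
The plan is to invoke Theorem~\ref{thm:GC} (the Glivenko-Cantelli theorem for finite VC classes), so the whole task reduces to showing that for each fixed $x\in\cX$ and each $\theta\in\{0,1,\dots,d^+_x\}$ the class of sets
$$\mathcal{H}_{x,\theta}=\{H_{x,\theta,t}^{-1}(1):t\geq 0\}\subseteq [0,+\infty)\times(\RR^+)^{d^+_x}\times[0,+\infty)^{d^+_x}$$
has finite VC dimension. (Strictly speaking the $H_{x,\theta,t}$ are $\{0,1\}$-valued functions, so I identify each with its support set; this is the standard identification used when one speaks of the VC dimension of a family of indicator functions.) Since $\Omega=[0,+\infty)\times(\RR^+)^{d^+_x}\times[0,+\infty)^{d^+_x}$ with the product topology is Polish and $\PP_x$ is a Borel probability measure on it, Theorem~\ref{thm:GC} then yields exactly the strong Glivenko-Cantelli conclusion $\sup_{H\in\mathcal{H}_{x,\theta}}|\PP_{x,n}H-\PP_x H|\to 0$ a.s.

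The first step is to decompose the indicator defining $H_{x,\theta,t}$. Recall
$$H_{x,\theta,t}(\epsilon_x,\bm{\ell}_x,\bm{u}_x)=\ind\{u_{(\theta)}\leq t<u_{(\theta+1)}\}\cdot G_{x,\theta}(\epsilon_x,\bm{\ell}_x,\bm{u}_x),$$
where $G_{x,\theta}(\epsilon_x,\bm{\ell}_x,\bm{u}_x):=\ind\{\tau_{x,0}(\epsilon_x,\bm{\ell}_x)=0,\ u_{(1)}>\tau_{x,1}(\epsilon_x,\bm{\ell}_x),\dots,u_{(\theta)}>\tau_{x,\theta}(\epsilon_x,\bm{\ell}_x)\}$ does \emph{not} depend on $t$. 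Thus each set in $\mathcal{H}_{x,\theta}$ is the intersection of the fixed set $\{G_{x,\theta}=1\}$ with a set of the form $\{u_{(\theta)}\leq t\}\cap\{u_{(\theta+1)}>t\}$. The map $\bm{u}_x\mapsto u_{(\theta)}$ (resp. $u_{(\theta+1)}$) is a fixed measurable real-valued function on the sample space, so $\{\bm{u}_x: u_{(\theta)}\leq t\}$ and $\{\bm{u}_x: u_{(\theta+1)}>t\}$ are each, as $t$ ranges over $\RR$, a family of sets totally ordered by inclusion (a ``monotone'' family pulled back from half-lines in $\RR$). A family of sets linearly ordered by inclusion has VC dimension $1$, hence so does each of these two families; and the class of half-lines $\{(-\infty,t]:t\in\RR\}$ on $\RR$ has VC dimension $1$. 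Second step: assemble. The class $\mathcal{H}_{x,\theta}$ is contained in the class of sets of the form $A\cap(B_1\cap B_2)$ where $A$ is \emph{fixed}, $B_1$ ranges over one VC-1 family and $B_2$ over another VC-1 family; finite intersections and products of finite-VC classes again have finite VC dimension (this is the standard stability of VC classes under Boolean operations — see e.g. the references in the excerpt, or van der Vaart--Wellner), so ${\rm VC}(\mathcal{H}_{x,\theta})<\infty$. Intersecting with a single fixed set $A$ cannot increase the VC dimension at all.

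The only genuinely delicate point — and the one I would write out with care — is that parametrizing by the single real parameter $t$ really does give a class of bounded complexity: concretely, the key reduction is that $H_{x,\theta,t}(\omega)=1$ iff $\omega\in\{G_{x,\theta}=1\}$ \emph{and} $u_{(\theta)}(\omega)\leq t<u_{(\theta+1)}(\omega)$, i.e. iff $\phi(\omega):=(u_{(\theta)}(\omega),u_{(\theta+1)}(\omega))$ lands in the half-open strip $\{(a,b): a\leq t<b\}$ of $\RR^2$ and $\omega\in\{G_{x,\theta}=1\}$. Pulling back: $\mathcal{H}_{x,\theta}\subseteq\{G_{x,\theta}=1\}\cap\phi^{-1}(\mathcal{C})$ where $\mathcal{C}=\{\{(a,b):a\leq t<b\}:t\in\RR\}$ is a one-parameter family of convex sets in $\RR^2$ and hence a VC class; VC dimension is preserved (does not increase) under pullback by a fixed map $\phi$ and under intersection with a fixed set. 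Therefore ${\rm VC}(\mathcal{H}_{x,\theta})\leq {\rm VC}(\mathcal{C})<\infty$, and Theorem~\ref{thm:GC} applies, completing the proof. I expect the bookkeeping around ``intersection with a fixed set and pullback do not increase VC dimension'' to be the part most worth stating precisely, since everything else is a direct citation.
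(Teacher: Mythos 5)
Your proposal is correct and follows the same route as the paper: verify that $\mathcal{H}_{x,\theta}$ is a finite VC class on a Polish space and then invoke Theorem~\ref{thm:GC}. The paper simply asserts $\mathrm{VC}(\mathcal{H}_{x,\theta})=2$ without elaboration, whereas you supply the verification by isolating the $t$-dependence in the factor $\ind\{u_{(\theta)}\leq t<u_{(\theta+1)}\}$ and viewing it as the intersection of two monotone (hence VC-$1$) families intersected with a fixed set $\{G_{x,\theta}=1\}$; this is exactly the missing bookkeeping. One small caveat: the side remark that a ``one-parameter family of convex sets'' in $\RR^2$ is automatically a VC class is not a valid inference in general, but your Boolean-combination argument stands on its own, so the proof is unaffected.
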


\begin{proof}
For each $x \in \mathcal{X}$ and $\theta = 0, 1, \dots, d^+_x$, it can be deduced from Claim~\ref{rq:GC} that $\mathbb{P}_x$ is a probability measure generated by $(\epsilon_x,\bm{L}_x,\bm{U}^\fR_x)$, and is defined on a Polish space. It can be verified that ${\rm VC}(\mathcal{H}{x,\theta}) = 2$, making it a finite VC class. The lemma follows then from Theorem~\ref{thm:GC}. 
\end{proof}

We will also use the following well known result, see e.g.~\cite{ross1996stochastic} for a proof,  which offers us a way to convert the loss reveal process into a death process with an  initial number $\cI^\star_n$ and i.i.d. lifetimes. 
\begin{lemma}\label{lem:density}
Let $\{N(t): t\geq 0\}$ be an inhomogeneous Poisson process with an intensity function $\fR(t)$ and arrival times $\{\sigma_k, k\geq 1\}$. Given any fixed $t >0$ and conditional on $m$ arrivals before time $t$, the random vector $(\sigma_1,\sigma_2,\ldots,\sigma_m)$  has the same distribution as  the random vector $(Y_{(1)},Y_{(2)},\ldots,Y_{(m)})$, where $Y_{(i)}$ is the $i$-th order statistic of $m$ i.i.d. random variables with probability density function
$\fR(s)(\int_0^t \fR(u)du\bigr)^{-1}$.
\end{lemma}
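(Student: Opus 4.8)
The statement is classical (it is the inhomogeneous analogue of the uniform order-statistics property of the Poisson process, cf.\ \cite{ross1996stochastic}); the plan is to reconstruct the standard argument by computing the conditional law of $(\sigma_1,\dots,\sigma_m)$ given $\{N(t)=m\}$ directly from the defining properties of $N$. Write $\Lambda(s):=\int_0^s\fR(u)\,du$ for the mean function. Since $\fR$ is positive and continuous, $\Lambda$ is continuous and strictly increasing; for disjoint intervals the increments of $N$ are independent with $N(b)-N(a)\sim\Poi\bigl(\Lambda(b)-\Lambda(a)\bigr)$, and $\PP(N(t)=m)=e^{-\Lambda(t)}\Lambda(t)^m/m!>0$, so the conditioning is well defined.

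First I would fix reals $0\le a_1<b_1\le a_2<b_2\le\cdots\le a_m<b_m\le t$, set $I_j:=(a_j,b_j]$ and $J:=[0,t]\setminus\bigcup_{j=1}^m I_j$ (a finite disjoint union of intervals), and put $\Lambda(J):=\int_J\fR$. Because the arrivals are ordered, the event $\bigcap_{j=1}^m\{\sigma_j\in I_j\}\cap\{N(t)=m\}$ is exactly the event that $N$ has precisely one point in each $I_j$ and no point in $J$: indeed $\sigma_j\le b_j\le a_{j+1}<\sigma_{j+1}$ forces the $\sigma_j$ to be strictly increasing with $\sigma_j\in I_j$, and conversely. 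By independence of increments over these disjoint intervals,
\[
\PP\Bigl(\textstyle\bigcap_{j=1}^m\{\sigma_j\in I_j\},\,N(t)=m\Bigr)=\Bigl(\prod_{j=1}^m\bigl(\Lambda(b_j)-\Lambda(a_j)\bigr)e^{-(\Lambda(b_j)-\Lambda(a_j))}\Bigr)e^{-\Lambda(J)}=e^{-\Lambda(t)}\prod_{j=1}^m\int_{a_j}^{b_j}\fR(s)\,ds,
\]
where I used $\sum_j(\Lambda(b_j)-\Lambda(a_j))+\Lambda(J)=\Lambda(t)$. Dividing by $\PP(N(t)=m)$ yields
\[
\PP\Bigl(\textstyle\bigcap_{j=1}^m\{\sigma_j\in I_j\}\,\Big|\,N(t)=m\Bigr)=m!\prod_{j=1}^m\int_{a_j}^{b_j}\frac{\fR(s)}{\Lambda(t)}\,ds.
\]
On the other hand, if $Y_1,\dots,Y_m$ are i.i.d.\ with density $\fR(s)/\Lambda(t)$ on $[0,t]$, the joint density of $(Y_{(1)},\dots,Y_{(m)})$ on the ordered simplex $\{0\le s_1\le\cdots\le s_m\le t\}$ is $m!\prod_{j=1}^m\fR(s_j)/\Lambda(t)$, so $\PP\bigl(\bigcap_j\{Y_{(j)}\in I_j\}\bigr)$ equals the same expression (the rectangle $I_1\times\cdots\times I_m$ lies inside the open simplex by the interleaving). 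Since such interleaved rectangles form a $\pi$-system generating the Borel sets of the open simplex, which carries full mass under both laws (each is absolutely continuous, so the boundary is null), the conditional law of $(\sigma_1,\dots,\sigma_m)$ coincides with that of $(Y_{(1)},\dots,Y_{(m)})$, which is the assertion.

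There is no genuine obstacle; the only points that need a line of care are the bookkeeping in the independent-increments computation (tracking the void probabilities on the complement $J$) and the remark that agreement on interleaved product rectangles pins down a law concentrated on the ordered simplex. As an alternative one may bypass the computation: write $N(t)=M(\Lambda(t))$ for a rate-one homogeneous Poisson process $M$ with arrivals $\tilde\sigma_k$, so that strict monotonicity of $\Lambda$ gives $\sigma_k=\Lambda^{-1}(\tilde\sigma_k)$; the classical homogeneous result states that conditional on $M(\Lambda(t))=m$ the vector $(\tilde\sigma_1,\dots,\tilde\sigma_m)$ is distributed as the order statistics of $m$ i.i.d.\ $\mathrm{Unif}(0,\Lambda(t))$ variables, and applying the fixed increasing map $\Lambda^{-1}$ coordinatewise (which commutes with ordering) turns a $\mathrm{Unif}(0,\Lambda(t))$ variable into one with density $\fR(s)/\Lambda(t)$ on $[0,t]$ by the inverse-transform formula, giving the claim.
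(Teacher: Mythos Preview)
Your argument is correct and is precisely the standard proof of this classical order-statistics property; the paper does not supply its own proof but simply cites \cite{ross1996stochastic}, and what you have written is essentially the textbook derivation one would find there (including the alternative time-change reduction to the homogeneous case). There is nothing to add.
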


We are now ready to present the proofs of our main theorems.

\subsection{Proof of Lemma~\ref{lem:fraction}}\label{proof:lem:fraction}

We first prove that for any $t\geq 0$, as $n\to \infty$,
$$\frac{\cI^{(n)}(t)}{n}\top \lambda\phi^{\fR}(t).$$

Let $\Delta s$ be a small time interval and let $m=\lceil t/\Delta s\rceil$ represent the number of time intervals between $[0,t]$. Notice that, as $n\to \infty$,
\begin{align*}
\frac{\EE[\cI^{(n)}(t)]}{n} & =\frac{1}{n} \sum_{i=1}^{m}\EE[\cI^{(n)}(i\Delta s)-\cI^{(n)}((i-1)\Delta s)]
= \sum_{i=1}^{m} \Delta s \fR(i\Delta s)+o(1). 
\end{align*}
Hence for $t\leq t_{\fR}(\lambda)$ as $\Delta s\to 0$, we have
$$\frac{\EE[\cI^{(n)}(t)]}{n} \to \int_0^t \fR(s) ds.$$
On the other hand, by the property of Poisson distribution, we have
$$\EE[\cI^{(n)}(i\Delta s)-\cI^{(n)}((i-1)\Delta s)]=\Var [\cI^{(n)}(i\Delta s)-\cI^{(n)}((i-1)\Delta s)].$$
It therefore follows that
\begin{align*}
\Var(\frac{\cI^{(n)}(t)}{n}) & =\frac{1}{n^2} \sum_{i=1}^{m}\Var[\cI^{(n)}(i\Delta s)-\cI^{(n)}((i-1)\Delta s)] \\
& = \frac{1}{n}\bigl(\sum_{i=1}^{m} \Delta s \fR(i\Delta s)+o(1)\bigr) = o(1). 
\end{align*}
Thus by Chebysev's inequality we have, as $n\to\infty$, for any $t\geq 0$, 
\begin{equation}\label{eq:Y}
\frac{\cI^{(n)}(t)}{n}\top \lambda \phi^{\fR}(t).
\end{equation}

As a consequence, by letting $t\to \infty$, the final fraction of revealed ruinous outgoing half-edges converges to $\|\fR\|_{L^1}$ in probability, i.e.,
\begin{align}\label{eq:final_reveal}
\frac{\mathcal{I}^\star_n}{n}\top \lambda.
\end{align} 
Let $T(x)$ be defined as the inverse of $A_t$, with
$A_t:=\int_0^{t}\fR(s)ds,$
 and $A_{T(x)}=x$. Suppose that for some $\delta>0$ and $n$ large, $T_n(x)\geq T(x)+\delta$ or $T_n(x)\leq T(x)-\delta$. Then by \eqref{eq:Y}, one can show that, for some small $\epsilon>0$ and $n$ large enough, with high probability
$$x\leq \frac{\cI^{(n)}(T_n(x))}{n}\top \lambda \phi^{\fR}(T_n(x))\geq x+\epsilon,$$
or (respectively)
 $$x-\frac{1}{n}\geq\frac{\cI^{(n)}(T_n(x))}{n}\top \lambda \phi^{\fR}(T_n(x))\leq x-\epsilon.$$
By contradiction, we conclude that with high probability (for large $n$)
$$T(x)-\delta< T_n(x)<T(x)+\delta.$$
It therefore follows that, by taking $\delta$ arbitrarily small,
$T_n(x)\top T(x)$ as $n\to \infty$.

Since $A_t$ is continuously differentiable, by the inverse function theorem we have
$T'(x)=\frac{1}{\fR(x)}.$
Therefore, $T(x)=\int_0^x 1/\fR(s)ds$, and we can conclude that
$$T_n(b)-T_n(a)\top \int_a^b \frac{1}{\fR(s)}ds,$$
for all $0\leq a<b <\|\fR\|_{L^1}\wedge\lambda$. 

Now, we proceed to prove \eqref{eq:L_limit}. Note that the limit of the total fraction of revealed ruinous half-edges in the end is equal to $\|\fR\|_{L^1}\wedge \lambda$. 
Then, using Lemma~\ref{lem:density}, we can transform the reveal process into a death process with an initial number of balls $\mathcal{I}^\star_n$ and i.i.d. lifetimes. Conditioned on $\mathcal{I}^\star_n$, the moments of revealing are simply the order statistics of  $\mathcal{I}^\star_n$ i.i.d. random variables with density function
\begin{equation}\label{eq:den}
f^{(n)}(t):=\frac{\cR_n(t)}{\|\cR_n\|_{L^1}\wedge(n\lambda^{(n)})}=\frac{\fR(t)}{\|\fR \|_{L^1}\wedge\lambda}+o_p(1),
\end{equation}
for ${t \leq t_{\fR}(\lambda)} $. By using dominated convergence theorem, the above equation implies that
\begin{equation}\label{eq:f_n}
 \sup_{t\geq 0}\bigl|\int_0^{t\wedge t_{\fR}(\lambda)} f^{(n)}(s)ds -\frac{\lambda}{\|\fR \|_{L^1}\wedge\lambda}\phi^\fR(t)\bigr|\top 0.
\end{equation}
 
By the above analysis, it is clear that $Y^{(n)}(t)$ is a pure death process with an initial number of balls $\cI^\star_n$ and  i.i.d. lifetimes with density $f^{(n)}(t)$, defined by \eqref{eq:den}. Thus, it follows from the Glivenko-Cantelli Theorem that
 \begin{equation}\label{eq:glivenko}
 \sup_{t\geq 0} \bigl| \frac{Y^{(n)}(t)}{\cI^\star_n}- \int_0^{t\wedge t_{\fR}(\lambda)}  f^{(n)}(s)ds\bigr|\top 0.
\end{equation}

Combining \eqref{eq:final_reveal}, \eqref{eq:f_n} and \eqref{eq:glivenko}, we obtain that
\begin{equation}\label{L_limit}
\begin{split}
\sup_{t\geq 0} & \frac{1}{\|\fR \|_{L^1}\wedge\lambda}\bigl|\frac{Y^{(n)}(t)}{n} - \int_0^{t\wedge t_\fR(\lambda)} \fR(s)ds\bigr| \\
& \leq \sup_{t\geq 0} \bigl| \frac{Y^{(n)}(t)}{n(\|\fR \|_{L^1}\wedge\lambda)}- \frac{Y^{(n)}(t)}{\cI^\star_n} \bigr|+ \sup_{t\geq 0} \bigl| \frac{Y^{(n)}(t)}{\cI^\star_n}- \int_0^{t\wedge t_{\fR}(\lambda)}  f^{(n)}(s)ds\bigr|\\
& \quad\quad + \sup_{t\geq 0}\bigl|\int_0^{t\wedge t_{\fR}(\lambda)}  f^{(n)}(s)ds -\phi^\fR(t)\bigr| \top 0,
\end{split}
\end{equation}
which implies that
\begin{align} 
\sup_{t\geq 0}\bigl|\frac{Y^{(n)}(t)}{n} - \int_0^{t\wedge t_\fR(\lambda)} \fR(s)ds\bigr|,
\end{align}
as desired.

\subsection{Proof of Theorem~\ref{thm:centrality}}\label{proof:thm:centrality}
Recall that $\cI_n(t)$ represents the number of revealed ruined outgoing half-edges at time $t$ before the stopping time $\tau^\star_n$. According to the construction of the configuration model, all pairs of outgoing and incoming half-edges are chosen uniformly at random. Even if the contagion stops before all half-edges are revealed, we continue the reveal process until the end for the sake of our analysis.
This will not change the process before $\tau^\star_n$ and hence will not affect our results.

Based on Lemma~\ref{lem:density}, conditioned on the total number of final revealed ruinous outgoing half-edges, $\cI^\star_n$, we can consider the contagion process as follows: There are $\cI^\star_n$ ruinous half-edges in total, each incoming half-edge pairs with a ruinous outgoing half-edge with probability $\cI^\star_n/n\lambda^{(n)}$ independently (see Remark~\ref{rq:little_change} below). If it pairs, this occurs after a random time from the start, with density $f^{(n)}(t)$, defined in \eqref{eq:den}.

 \begin{remark}\label{rq:little_change}
Note that the following two events are not generally equivalent:
\begin{itemize}
\item[(i)] Each incoming half-edge is paired with a ruinous outgoing half-edge with probability $\cI^\star_n/n\lambda^{(n)}$ independently;
\item[(ii)] All pairs of incoming and outgoing half-edges are matched uniformly at random, with a total of $\cI^\star_n$ ruinous outgoing half-edges.
\end{itemize}
However, as $n$ grows, the number of ruinous half-edges in $(i)$ will approach $\cI^\star_n$ with high probability, due to the strong law of large numbers. The total number of ruinous half-edges in $(i)$ would be $\cI^\star_n+o_p(n)$. This slight deviation does not affect the limit results.
\end{remark}

Let us define 
$$t^{(n)}_{\fR}(\lambda):=\inf\{t\geq 0: \frac 1 n \int_0^t \cR_n(s)ds\geq \lambda\}.$$ 

Denote $\PP^n_x$ as the probability measure generated by the vectors $\bm{L}_x$ and $\bm{U}^{\fR,(n)}_x$, where $\bm{U}^{\fR,(n)}_x$ is defined similarly as $\bm{U}^\fR_x$ in Claim~\ref{rq:GC} with the distribution
$$\PP(U^{\fR,(n)}_{x,i}\leq y)=\frac{\int_0^y \cR_n(s)ds}{\int_0^{t^{(n)}_{\fR}(\lambda)} \cR_n(s)ds}, \ \ \text{for} \ \ y\leq t^{(n)}_{\fR}(\lambda).$$

Note that $\PP^n_x$ is a random measure since it depends on $\cR_n$. Let $N^{(n)}_x$ denote the number of type $x$ agents in $\cGn$. By the analysis at the beginning of the proof, we know that, uniformly for all incoming half-edges, the probability of pairing with a ruinous half-edge before time $t$ is
$$\phi_n^\fR(t):=\frac{\cI^\star_n\int_0^{t\wedge t^{(n)}_{\fR}(\lambda)} \cR_n(s)ds}{n\lambda^{(n)}\int_0^{t^{(n)}_{\fR}(\lambda)}\cR_n(s)ds},$$
and the probability of never pairing with a ruinous half-edge is 
$\frac{n\lambda^{(n)}-\cI^\star_n}{n\lambda^{(n)}}.$

Under the probability measure $\PP^n_x$, the probability of there being $\theta$ loss arrivals before time $t$ is given by $b(d^+_x,\phi_n^\fR(t),\theta)$. Given $\theta$ loss arrivals before time $t$, the probability of a type $x$ agent being solvent at time $t$ is 
$$\PP^n_x\bigl(\tau_{x,0}(\epsilon_x,\bm{L}_x)=0,U^{\fR,t,(n)}_{(1)}>\tau_{x,1}(\epsilon_x,\bm{L}_x),U^{\fR,t,(n)}_{(2)}>\tau_{x,2}(\epsilon_x,\bm{L}_x),\ldots,U^{\fR,t,(n)}_{(\theta)}>\tau_{x,\theta}(\epsilon_x,\bm{L}_x)\bigr),$$
where $U^{\fR,t,(n)}_{(1)},U^{\fR,t,(n)}_{(2)},\ldots,U^{\fR,t,(n)}_{(\theta)}$ are the order statistics of $\theta$ i.i.d. random variable $\bigl(U^{\fR,t,(n)}_i\bigr)_{i=1}^\theta$ with distribution
$$\PP(U^{\fR,t,(n)}\leq y)=\frac{\int_0^y \cR_n(s)ds}{\int_0^t \cR_n(s)ds},\quad\quad y\leq t.$$
We define $\cS^{\fR,(n)}_{x,\theta}(t)$ as following:
$$\cS^{\fR,(n)}_{x,\theta}(t):=\PP^n_x(\tau_{x,0}(\epsilon_x,\bm{L}_x)=0,U^{\fR,t,(n)}_{(1)}>\tau_{x,1}(\epsilon_x,\bm{L}_x),\ldots,U^{\fR,t,(n)}_{(\theta)}>\tau_{x,\theta}(\epsilon_x,\bm{L}_x)).$$

By Claim~\ref{rq:GC}, for any $t\geq 0$, $x\in\cX$ and $0\leq\theta\leq d^+_x$, the fraction of solvent agents with exactly $\theta$ losses absorbed before time $t$, namely $S_{x,\theta}^{(n)}(t)/N^{(n)}_x$ is the mapping $\widetilde{\PP}^n_x H_{x,\theta,t}$ with respect to the empirical measure of $\PP^n_x$, denoted by $\widetilde{\PP}^n_x$. By Lemma~\ref{lem:GC}, we have for any stopping time $\tau_n\leq \tau^\star_n$,  
\begin{align}\label{eq:11}
\sup\limits_{t\leq \tau_n}\bigl|\frac{S_{x,\theta}^{(n)}(t)}{N^{(n)}_x}- b(d^+_x,\phi_n^{\fR}(t),\theta)\cS^{\fR,(n)}_{x,\theta}(t)\bigr|\top 0.
\end{align}

Combine the above analysis, clearly we have $\PP^n_xH_{x,\theta,t}=b(d^+_x,\phi_n^{\fR}(t),\theta) \cS^{\fR,(n)}_{x,\theta}(t)$. Notice that, as $n\to \infty$, we have for any $t\geq 0$,
$$\phi_n^\fR(t)=\frac{\cI^\star_n\int_0^{t\wedge t^{(n)}_{\fR}(\lambda)} \cR_n(s)ds}{n\lambda^{(n)}\int_0^{t^{(n)}_{\fR}(\lambda)}\cR_n(s)ds}\top\frac{\int_0^{t\wedge t_{\fR}(\lambda)} \fR(s)ds}{\lambda}=\phi^\fR(t) ,$$
and
$$\frac{n\lambda^{(n)}-\cI^\star_n}{n\lambda^{(n)}}\top \frac{\lambda-(\|\fR\|_{L^1}\wedge\lambda)}{\lambda}.$$
Combining with dominated convergence theorem, the above two equations give that
\begin{align}\label{eq:22}
\sup_{t\geq 0} |\PP^n_x H_{x,\theta,t}-\PP_x H_{x,\theta,t}|\top 0.
\end{align}
Hence by \eqref{eq:11} and \eqref{eq:22}, we obtain
$$\sup\limits_{t\leq \tau_n}\bigl|\frac{S_{x,\theta}^{(n)}(t)}{N^{(n)}_x}- b(d^+_x,\phi^{\fR}(t),\theta) \cS^{\fR}_{x,\theta}(t)\bigr|\top 0.$$ 

Note that by Assumption~\ref{cond-limit}, $N^{(n)}_{x}/n\longrightarrow\mu_x$ as $n\longrightarrow\infty$, for all $x\in\cX$, so we have 
\begin{align*}
\sup\limits_{t\leq \tau_n}|\frac{N^{(n)}_x}{n}\cS^\fR_{x,\theta}(t)- \mu_x \cS^{\fR}_{x,\theta}(t)\bigr|\to 0.
\end{align*}
Combine the two equations above, we obtain our first assertion
\begin{align}\label{S_limit}
\sup\limits_{t\leq \tau_n}\bigl|\frac{S_{x,\theta}^{(n)}(t)}{n}-\mu_x b(d^+_x,\phi^{\fR}(t),\theta)  \cS^\fR_{x,\theta}(t)\bigr|\top 0.
\end{align}

Let $\cX_K$ be the set of all characteristic $x\in \cX$ such that $d^+_x+d^-_x\leq K$. Since (by Assumption~\ref{cond-avg}) $\lambda\in(0,\infty)$, for arbitrary small $\varepsilon>0$, there exists $K_{\varepsilon}$ such that $\sum_{x\in\cX\setminus\cX_{K_{\varepsilon}}}\mu_x(d^+_x+d^-_x)<\varepsilon$. Further, by Assumption~\ref{cond-avg} and dominated convergence, $$\sum_{x\in\cX\setminus\cX_{K_{\varepsilon}}}(d^+_x+d^-_x)N^{(n)}_x/n\longrightarrow
\sum_{x\in\cX\setminus\cX_{K_{\varepsilon}}}(d^+_x+d^-_x)\mu_x<\varepsilon.$$
Hence for $n$ large enough, we have $\sum_{x\in\cX\setminus\cX_{K_{\varepsilon}}}(d^+_x+d^-_x)N^{(n)}_x/n<2\varepsilon$. By \eqref{S_limit}, we obtain
\begin{align*}
&\sup\limits_{t\leq \tau_n} \sum_{x\in\cX}(d^+_x+d^-_x)\sum_{\theta=0}^{d^+_x}\bigl|\frac{S_{x,\theta}^{(n)}(t)}{n}-\mu_x b(d^+_x,\phi^{\fR}(t),\theta)  \cS^\fR_{x,\theta}(t)\bigr| \nonumber\\
 \leq& \sup\limits_{t\leq \tau_n}\sum_{x\in\cX_{K_{\varepsilon}}}(d^+_x+d^-_x)\sum_{\theta=0}^{d^+_x} \bigl|\frac{S_{x,\theta}^{(n)}(t)}{n}-\mu_x b(d^+_x,\phi^{\fR}(t),\theta)  \cS^\fR_{x,\theta}(t)\bigr|\nonumber\\
  &+ \sup\limits_{t\leq \tau_n}\sum_{x\in\cX\setminus\cX_{K_{\varepsilon}}} (d^+_x+d^-_x)\sum_{\theta=0}^{d^+_x} \bigl|\frac{S_{x,\theta}^{(n)}(t)}{n}-\mu_x b(d^+_x,\phi^{\fR}(t),\theta)  \cS^\fR_{x,\theta}(t)\bigr|\nonumber\\
  \leq& o_p(1)+\sum_{x\in\cX\setminus\cX_{K_{\varepsilon}}}(d^+_x+d^-_x)(N^{(n)}_x/n+\mu_x)\leq o_p(1)+3\varepsilon.
\end{align*}
By taking $\varepsilon$ arbitrarily small, it follows that
\begin{align}\label{S_sum_limit}
\sup\limits_{t\leq \tau_n} \sum_{x\in\cX}(d^+_x+d^-_x)\bigl|\frac{S_{x,\theta}^{(n)}(t)}{n}-\mu_x b(d^+_x,\phi^{\fR}(t),\theta)  \cS^\fR_{x,\theta}(t)\bigr|\top 0.
\end{align}

Note that the total number of solvent agents at time $t$ satisfies
$$S^{(n)}(t)=\sum_{x\in\cX}\sum_{\theta=0}^{d^+_x}S^{(n)}_{x,\theta}(t),$$
which is dominated by $\sum_{x\in\cX}(d^+_x+d^-_x)\sum_{\theta=0}^{d^+_x}S^{(n)}_{x,\theta}(t)$. Then, by the convergence results \eqref{S_limit} and \eqref{S_sum_limit}, we obtain
$$\sup\limits_{t\leq \tau_n}\bigl|\frac{S^{(n)}(t)}{n}-f_S(t)\bigr|\top 0.$$
Further, from $D^{(n)}(t)=n-S^{(n)}(t)$, the number of ruined agents at time $t$ also satisfies
$$ \sup\limits_{t\leq \tau_n}\bigl|\frac{D^{(n)}(t)}{n}-f_D(t)\bigr|\top 0.$$

Finally, the total number of remaining ruinous outgoing half-edges at time $t$ is given by
$$W_n(t)=n\lambda^{(n)}-\cI^{(n)}(t)-\sum_{x\in\cX}\sum_{\theta=0}^{d^+_x}d^-_xS^{(n)}_{x,\theta}(t).$$ 
The same argument and \eqref{L_limit} imply that
\begin{align*}
\sup\limits_{t\leq \tau_n}\bigl|\frac{W_n(t)}{n}-f_W(e^{-t})\bigr|\top 0.
\end{align*}

This completes the proof of Theorem~\ref{thm:centrality}.

\subsection{Proof of Theorem~\ref{coro:centrality}}\label{proof:coro:centrality}
Similar to Example~\ref{eg:const_loss}, for any $x\in\cX$ and a realization of loss sequence $\bm{\ell}_x$ and shock $\epsilon_x$, we get
$$P^{\fR}_{x,\theta}(t,\epsilon_x,\bm{\ell}_x):=\PP(\tau_{x,0}(\epsilon_x, \bm{\ell}_x)=0 , U^{\fR,t}_{(1)}>\tau_{x,1}(\epsilon_x, \bm{\ell}_x),\ldots,U^{\fR,t}_{(\theta)}>\tau_{x,\theta}(\epsilon_x, \bm{\ell}_x))=\ind\{\theta < \delta_x(\epsilon_x, \bm{\ell}_x)\},$$
where $\delta_x(\epsilon_x, \bm{\ell}_x):=\inf\{\theta=0, \dots, d_x^+:\gamma_x(1-\epsilon_x)-\delta_x<\ell_{x,1}+\dots+\ell_{x,\theta}\}$ (by convention we set $\delta_x(\epsilon_x, \bm{\ell}_x):=\infty$ if there is not such a threshold $\theta$).
Hence, from the definition of the default threshold distribution $q_{x,\theta}$, $\cS^\fR_{x,\theta}(t)$ simplifies to
$$\cS^\fR_{x,\theta}(t)=\sum_{\delta=\theta+1}^{d^+_x}q_{x,\theta}+\bar{q}_x.$$
By applying Theorem~\ref{thm:centrality}, the first claim is established. Observing that
 $$b(d^+_x,\phi^\fR(t),\theta)=b(d^+_x,1-\phi^\fR(t),d^+_x-\theta),$$
we can rearrange the order of the sums to obtain
\begin{align*}
\sum_{\theta=1}^{d^+_x}b(d^+_x,\phi^\fR(t),\theta)\cS^\fR_{x,\theta}(t) & = \sum_{\theta=1}^{d^+_x}\bigl[\sum_{\delta=\theta+1}^{d^+_x}q_{x,\theta}+\bar{q}_x\bigr]b(d^+_x,1-\phi^\fR(t),d^+_x-\theta)\\
& =\bar{q}_x+\sum_{\theta=1}^{d_x^+}q_{x,\theta} \beta\bigl(d_x^+, 1-\phi^\fR(t), d_x^+-\theta+1\bigr).
\end{align*}
This leads to the conclusion for the limit function for  $\widehat{f}^\fR_S(t)$. Through similar arguments and calculation, the limit functions for the other cases can also be derived.

\subsection{Proof of Lemma~\ref{lem:tau}}\label{proof:lem:tau}
Recall that $$f^\fR_{W}(t):= \lambda(1-\phi^\fR(t)) - \sum_{x\in \cX} \mu_x d_x^-\sum_{\theta=0}^{d_x^+}b(d^+_x,\phi^{\fR}(t),\theta)\cS^\fR_{x,\theta}(t).$$
Consider a constant $t_1\in(0,  t_\fR^\star)$. Due to the continuity of $f^\fR_W(t)$ on the interval $[0,\infty)$, it follows that $f^\fR_W(t)>0$ on $[0,t_1)$. Thus, there exists a constant $C_1>0$ such that $f^\fR_W(t)\geq C_1$ for all $t\leq t_1$.

Since $W_n(\tau_n^{\star})=0$, if $\tau_n^{\star}\leq t_1$, then we have $W_n(\tau_n^{\star})/n-f^\fR_W(\tau^\star_n)\leq -C_1$. But on the other hand, according to Theorem~\ref{thm:centrality}, we have
$$\sup\limits_{t\leq \tau_n^{\star}}|\frac{W_n(t)}{n}-f^\fR_{W}(t)|\top 0.$$
This is a contradiction. Hence, it must be the case that
$\mathbb{P}(\tau_n^{\star}\leq t_1)\longrightarrow 0$, as $n\longrightarrow\infty$.
In the case where $t^\star_\fR=\infty$, we can choose any finite $t_1$ arbitrarily large, which implies that $\tau_n^{\star}\top\infty$.

We now consider the other scenario. Fix a constant $t_2\in (t^\star_\fR,t^\star_\fR+\varepsilon)$. Using a similar argument as above, we can show that there exists some constant $C_2>0$ such that $W_n(\tau_n^{\star})/n-f^\fR_W(\tau^\star_n)\geq C_2$ if $\tau_n^{\star}\geq t_2$. Therefore, $\mathbb{P}(\tau^\star_n \geq t_2)\longrightarrow 0$ as $n\longrightarrow\infty$. As $t_1$ and $t_2$ are arbitrary, letting both $t_1$ and $t_2$ tend to $t^\star_\fR$, we have $\tau^\star_n\top t^\star_\fR$.

\subsection{Proof of Theorem~\ref{thm:final}}\label{proof:thm:final}
By using Lemma~\ref{lem:tau}, we have that the stopping time $\tau^\star_n$ converges $t^\star_\fR$ in probability. In combination with the  continuity of $\cS^{\fR}_{x,\theta}(t)$ on $t$ and Theorem~\ref{thm:centrality},
\begin{align*}
\sup\limits_{t\leq \tau^\star_n}\bigl|\frac{D_{x}^{(n)}(t)}{n}-\mu_x \bigl(1-\sum_{\theta=0}^{d_x^+} b(d^+_x,\phi^{\fR}(t),\theta)  \cS^{\fR}_{x,\theta}(t) \bigr)\bigr|\top 0,
\end{align*}
as $n\to \infty$. Besides, If $\int_0^{t^\star_\fR} |\fR(s)| ds=\lambda$, that means at the stopping time $\tau^\star_n$, we have revealed almost all the outgoing half-edges. Thus the number of defaults must be $n-o_p(n)$.
Moreover, by Lemma~\ref{lem:tau}, we have $\tau^\star_n \top \infty$. Notice that $\cS^\fR_{x,\theta}(t)$ is non-decreasing and can not be larger than one. Thus there exists a limit for $\cS^\fR_{x,\theta}(t)$ when $t\to\infty$. Note also that, if $\|\fR\|_{L^1}<\lambda$, $\phi^{\fR}(t)\to \|\fR\|_{L^1}/\lambda$ as $t\to \infty$. Then the theorem follows by Lemma~\ref{lem:tau} and Theorem~\ref{thm:centrality}.

\subsection{Proof of Theorem~\ref{thm:main}}\label{sec:proof_main}
{\bf Proof of $(i)$.} The proof of $(i)$ will be divided into two parts.

{\bf Lipschitz continuity.} Let $\|\cdot\|_t$ denote the truncated $\mathbb{L}^1$ norm up to $t$, i.e., for a function $f$ defined on $\RR^+$, $\|f\|_t=\int_0^t |f(s)|ds$. The first step is to prove that $\Psi$ has some special Lipschitz continuity with respect to the first parameter. To be precise, for two different $\fR_1,\fR_2\in \mathbb{L}_\lambda(\RR^+)$ and all $t>0$, there exists a constant $C_0$, such that
\begin{equation}\label{Lip}
|\Psi(\fR_1)(t)-\Psi(\fR_2)(t)|\leq C_0\|\fR_1-\fR_2\|_t.
\end{equation}

For the first term, it is clear that
$$\lambda |\phi^{\fR_1}(t)-\phi^{\fR_2}(t)|\leq\int_0^t|\fR_1(s)-\fR_2(s)| ds.$$ 

We now analyze the difference $|\cS^{\fR_1}_{x,\theta}(t)-\cS^{\fR_2}_{x,\theta}(t)|$.  Let us recall the definitions of 
$$P^{\fR}_{x,\theta}(t,\epsilon_x,\bm{\ell}_x):=\PP(\tau_{x,0}(\epsilon_x, \bm{\ell}_x)=0 , U^{\fR,t}_{(1)}>\tau_{x,1}(\epsilon_x, \bm{\ell}_x),\ldots,U^{\fR,t}_{(\theta)}>\tau_{x,\theta}(\epsilon_x, \bm{\ell}_x)),$$
and
$$\cS^{\fR}_{x,\theta}(t)=\PP(\tau_{x,0}(\epsilon_x,\bm{L}_x)=0, U^{\fR,t}_{(1)}>\tau_{x,1}(\epsilon_x, \bm{L}_x),\ldots,U^{\fR,t}_{(\theta)}>\tau_{x,\theta}(\epsilon_x, \bm{L}_x)).$$

For each realization of loss sequence $\bm{\ell}_x$ and shock $\epsilon_x$, let $\cA_{x,\theta}(\epsilon_x,\bm{\ell}_x)$ be the set of all $(u_1,u_2,\ldots,u_{\theta})$ such that the rearranged sequence $(u_{(1)},u_{(2)},\ldots,u_{(\theta)})$ in increasing order satisfies 
$$u_{(1)}>\tau_{x,1}(\epsilon_x, \bm{\ell}_x), \ldots,u_{(\theta)}>\tau_{x,\theta}(\epsilon_x, \bm{\ell}_x).$$
Let $\fR^t(s)=\fR(s)/\|\fR\|_t$. We have
\begin{align*}
& \int_{\cA_{x,\theta}(\epsilon_x,\bm{\ell}_x)}\fR^t_1(s_1)\cdots \fR^t_1(s_\theta)ds_1 \cdots ds_\theta-\int_{\cA_{x,\theta}(\bm{\ell}_x)}\fR^t_2(s_1)\cdots \fR^t_2(s_\theta)ds_1\cdots ds_\theta \\
\leq & \int_{\cA_{x,\theta}(\bm{\ell}_x)}| \fR^t_1(s_1)\cdots \fR^t_1(s_\theta)-\fR^t_2(s_1)\cdots \fR^t_2(s_\theta)|ds_1 \cdots ds_\theta \\
\leq &  \int_{[0,t]^\theta}| \fR^t_1(s_1)\cdots \fR^t_1(s_\theta)-\fR^t_2(s_1)\cdots \fR^t_2(s_\theta)|ds_1 \cdots ds_\theta.
\end{align*}
 By adding and subtracting terms, it is easy to show that
 
$$ \int_{[0,t]^\theta}| \fR^t_1(s_1)\cdots \fR^t_1(s_\theta)-\fR^t_2(s_1)\cdots \fR^t_2(s_\theta)|ds_1 \cdots ds_\theta \leq \theta \|\fR^t_1-\fR^t_2\|_t.$$
Notice that there exists a constant $C_1$ large enough such that
\begin{align*}
\|\fR^t_1-\fR^t_2\|_t \leq & \|\frac{\fR_1}{\|\fR_1\|_t}-\frac{\fR_2}{\|\fR_1\|_t}\|_t+\|\frac{\fR_2}{\|\fR_1\|_t}-\frac{\fR_2}{\|\fR_2\|_t}\|_t \\
\leq & \frac{\|\fR_1-\fR_2\|_t}{\|\fR_1\|_t}+\frac{\|\fR_2\|_t-\|\fR_1\|_t}{\|\fR_1\|_t\|\fR_2\|_t}\|\fR_2\|_t\\
\leq  & C_1 \|\fR_1-\fR_2\|_t.
\end{align*}
Therefore, for any realization of loss sequence $\bm{\ell}_x$ and shock $\epsilon_x$, we have 
$$\PP(\tau_{x,0}(\epsilon_x, \bm{\ell}_x)=0 , U^{\fR,t}_{(1)}>\tau_{x,1}(\epsilon_x, \bm{\ell}_x),\ldots,U^{\fR,t}_{(\theta)}>\tau_{x,\theta}(\epsilon_x, \bm{\ell}_x))\leq C_1\theta \|\fR_1-\fR_2\|_t.$$
It therefore follows that
\begin{equation}\label{eq:P}
\PP(\tau_{x,0}(\epsilon_x,\bm{L}_x)=0, U^{\fR,t}_{(1)}>\tau_{x,1}(\epsilon_x, \bm{L}_x),\ldots,U^{\fR,t}_{(\theta)}>\tau_{x,\theta}(\epsilon_x, \bm{L}_x))\leq C_1\theta \|\fR_1-\fR_2\|_t.
\end{equation}

On the other hand, by elementary calculation for the derivative of $b(d^+_x, y ,\theta)$ with respect to $y$, 
$$\frac{\partial b(d^+_x,y,\theta)}{\partial y}=d^+_xb(d^+_x-1,y,\theta-1)-d^+_xb(d^+_x-1,y,\theta),$$
and 
$$\frac{\partial b(d^+_x,y,0)}{\partial y}=-d^+_x b(d^+_x-1,y,0),\quad\quad\frac{\partial b(d^+_x,y,d^+_x)}{\partial y}=d^+_xb(d^+_x-1,y,d^+_x-1).$$
For each $y\in[0,1]$, there exists a threshold $\bar{\theta}_y$ such that $\partial b(d^+_x,y,\theta)/\partial y\leq 0$ for $\theta\leq \bar{\theta}_y$ and $\partial b(d^+_x,y,\theta)/\partial y \geq 0$ for $\theta > \bar{\theta}_y$. Therefore we have for all $x\in\cX$,
\begin{align}\label{eq:deriv}
\frac{\partial \sum_{\theta=0}^{d^+_x} a_{x,\theta} b(d^+_x, y,\theta)}{\partial y} \leq 2 d^+_x b(d^+_x-1,y, \bar{\theta}_y)\leq 2 d^+_x,
\end{align}
if $a_{x,\theta}$ is a constant coefficient satisfying $0\leq a_{x,\theta} \leq1$ for all $(x,\theta)$. 
 
Then by adding and subtracting terms and combining \eqref{eq:P} and \eqref{eq:deriv}, we conclude that there exists a constant $C_2$ such that for each $x\in\cX$,
\begin{align*}
\bigl|\sum_{\theta=0}^{d^+_x}(b(d^+_x,\phi^{\fR_1}(t),\theta)&\cS^{\fR_1}_{x,\theta}(t)- b(d^+_x,\phi^{\fR_2}(t),\theta)\cS^{\fR_2}_{x,\theta}(t))\bigr| \\
 & \leq \sum_{\theta=0}^{d^+_x} b(d^+_x,\phi^{\fR_2}(t),\theta) (C_1 \theta) \|\fR_1-\fR_2\|_t + 2 d^+_x |\phi^{\fR_1}(t)-\phi^{\fR_2}(t)| \\
& \leq   \sum_{\theta=0}^{d^+_x} b(d^+_x,\phi^{\fR_2}(t),\theta) C_1 d^+_x \|\fR_1-\fR_2\|_t + 2\frac{ d^+_x}{\lambda} \|\fR_1-\fR_2\|_t  
\leq C_2 d^+_x \|\fR_1-\fR_2\|_t.
\end{align*}


It therefore follows from Assumption~\ref{cond-moment} that there exists a constant $C_0$ such that
$$\sum_{x\in \cX} \mu_x q_x d_x^- \bigl|\sum_{\theta=0}^{d_x^+}(\cS^{\fR_1}_{x,\theta}(t)-\cS^{\fR_2}_{x,\theta}(t))\bigr|\leq C_1 \sum_{x\in \cX} \mu_x q_x d_x^- d_x^+\|\fR_1-\fR_2\|_t\leq C_0\|\fR_1-\fR_2\|_t,$$
and hence \eqref{Lip} holds.

{\bf Existence and uniqueness.} We show existence using a standard iterative procedure. To prove the existence and uniqueness on the non-negative real numbers $\mathbb{R}^+$, it is sufficient to prove them on the interval $[0,T]$ for any arbitrary $T>0$. Specifically, let $h^{(0)}(t)=\beta\sum_{x\in \cX}\mu_x d^-_x(1-q_{x,0})-t$, and define
\begin{equation}\label{define_ite}
h^{(n)}(t)=\beta\Psi(h^{(n-1)})(t).
\end{equation}
By \eqref{Lip}, we have (for $v\in \RR_+$)
\begin{equation*}
|h^{(n)}(v)-h^{(n-1)}(v)| \leq  \beta C_0 \|h^{(n-1)}-h^{(n-2)}\|_v.
\end{equation*}
Integrating the above formula from $v=0$ to $v=t$ gives us
\begin{equation}\label{ite}
\begin{split}
\|h^{(n)}-h^{(n-1)}\|_t \leq \beta C_0\int_0^t\|h^{(n-1)}-h^{(n-2)}\|_vdv.
\end{split}
\end{equation}
Clearly $\|h^{(1)}-h^{(0)}\|_T\leq C$ for some constant $C$. Then by iterating the above formula \eqref{ite}, we have 
$$\|h^{(n)}-h^{(n-1)}\|_t \leq \frac{C}{(n-1)!}(\beta C_0)^{n-1}t^{n-1}.$$
Moreover, for some constant $C(\beta,K,T)$ depending on $\beta$, $K$ and $T$, the infinite sum satisfies
$$\sum_{n=1}^{\infty}\|h^{(n)}-h^{(n-1)}\|_T\leq C(\beta,C_0,T)e^{C(\beta,C_0,T)}.$$
Therefore the series $h^{(n)}(t)$ converges in the $L^1([0,T])$ space to a limit $\fF(t)$. Due to the continuity of the limit, we can conclude the existence of a solution.

To prove uniqueness, suppose there exists two different solutions $h_1(t)$ and $h_2(t)$ satisfying the fixed point equation. We have
$$\|h_1-h_2\|_T \leq C_0\int_0^T \|h_1-h_2\|_vdv.$$  
Since the function $\|h_1-h_2\|_t$ is bounded on $[0,T]$ and positive, the Gronwall Lemma implies that $\|h_1-h_2\|_t=0$ on $[0,T]$. Since $T$ is arbitrary and the solution is continuous, uniqueness follows. 

{ \bf Proof of $(ii)$.} 
By the construction of our networked risk processes, the loss reveal function $\cR_n(t)$ is equal to the $\beta W_n(t)$. Suppose that the limit process of $W_n(t)$ exists and satisfies the conditions outlined in Theorem~\ref{thm:centrality}. From the definition of $f^{\fR^\star}_W(t)$, we have
$$\frac{\fR^\star(t)}{\beta}=f^{\fR^\star}_W(t)=\lambda \phi^{\fR^\star}(t) - \sum_{x\in \cX} \mu_x d_x^-\sum_{\theta=0}^{d_x^+}b(d^+_x,\phi^{\fR^\star}(t),\theta)  \cS^{\fR^\star}_{x,\theta}(t).$$
The existence and uniqueness of the solution to this fixed point equation were proved in point $(i)$. As a result, by Theorem~\ref{thm:centrality}, the unique solution $\fR^\star$ of the fixed point equation in $(i)$  is the limit process of $\beta W_n(t)$ and we have 
 \begin{align*}
\sup\limits_{t\leq \tau^\star_n}\bigl|\frac{W_n(t)}{n}-\frac{\fR^\star(t)}{\beta}\bigr|\top 0.
\end{align*}
The remaining limit results follow directly from Theorem~\ref{thm:centrality}.

The proof of Theorem~\ref{thm:final} is now complete.  

\subsection{Proof of Lemma~\ref{lem:tau_W}}
Let $\alpha_n n$ denote the number of initially ruined outgoing half-edges. Note that, since $\fR^\star(0)=\beta \sum_{x\in\cX}\mu_x d^-_x(1-q_{x,0})>0$, we have $\lim\inf_n \alpha_n>0$. Let $\Lambda_n$ denote the time required to reveal all the $\alpha_n n$ initially ruined outgoing half-edges without incurring any new ruined agents, i.e. 
$$\Lambda_n=T^{(n)}_1+T^{(n)}_2+\cdots+ T^{(n)}_{\alpha_n n},$$
where $T^{(n)}_k$ is the time duration of the $k$-th reveal and is an exponential random variable with parameter $\alpha_n n-k+1$, and they are independent. If no new ruins incurred at each step, $\tau^\star_n$ will attain the smallest possible value stochastically, namely $\tau^\star_n\geq_{\mbox{st}} \Lambda_n$. It is therefore sufficient to prove that $\Lambda_n>(1-\epsilon)\log n$ with high probability for any $\epsilon >0$.
By Markov's inequality we have
$$\PP(\Lambda_n\leq (1-\epsilon)\log n)=\PP(e^{-\Lambda_n}\geq e^{-(1-\epsilon)\log n})\leq e^{(1-\epsilon)\log n}\prod_{k=1}^{\alpha_n n} \EE e^{-T^{(n)}_k}.$$
Since $\EE e^{-T^{(n)}_k}=(\alpha_n n-k+1)/(\alpha_n n-k)$, we have
\begin{align*}
\PP(\Lambda_n\leq (1-\epsilon)\log n)\leq & \exp\{(1-\epsilon)\log n-\sum_{k=1}^{\alpha_n n}\log (1+\frac{1}{k}) \} .
\end{align*} 
By applying Taylor expansion to $\log(1+1/k)$, it follows that
\begin{align*}
\PP(\Lambda_n\leq (1-\epsilon)\log n)\leq & \exp\{(1-\epsilon)\log n- \log (\alpha_n n)-o(\log n) \} \\
\leq & \exp\{(1-\epsilon)\log n- \log n- \log \alpha_n-o(\log n)= O (n ^{-\epsilon}).
\end{align*} 
We thus have $\Lambda_n>(1-\epsilon)\log n$ with high probability for any $\epsilon >0$. The proof is complete.

\section{Complex Networked Risk Processes}
\label{sec:extensions}

So far we have assumed that the external debt is a constant function for each agent. It will be interesting to extend the model by considering a dynamics for this external debt that is itself like in the classical Cram\'er-Lundberg model. Namely, we assume that the external debt for agent $i\in[n]$ follows  $\delta_i(t)= \sum_{j=1}^{N_i(t)} \zeta_{i}^{(j)}$, where $N_i(t)$ is a Poisson process with intensity $\beta_i$ and the claim sizes $\{\zeta_{i}^{(j)}\}_{j=1}^\infty$ are i.i.d. distributed random variables with distribution $G_i$ with finite positive mean and variance.

The risk process for the capital of agent $i\in [n]$ with network interactions $\cG_n$ follows
\begin{align}\label{eq:riskP-comp}
C_i(t):=\gamma_i(1-\epsilon_i)+\alpha_i (t)-\sum_{j=1}^{N_i(t)} \zeta_{i}^{(j)}-\sum_{j\in [n]: j\to i} L_{ji}\ind\{\tau_j+T_{ji} \leq t\},
\end{align}
 where, similar to \eqref{eq:riskP} we  assume that $T_{ji}\sim \mathrm{Exp}(\beta)$ are i.i.d. exponentially distributed with some parameter $\beta>0$ for all $i,j \in [n]$. 

Consider the node classification of Section~\ref{sec:class} and assume that for all agents of the same type $x\in\cX$, the associated external risk process has the same features. Therefore,  agents of type $x$ have the same claim distribution, denoted  by $G_x$ and external claim arrival intensity denoted by $\beta_x$.

Let us define the external risk process for an agent $i$  of type $x\in \cX$ with initial capital $u$ by
$$C_x^{\rm EX} (u,t):=u+\alpha_x (t)-\sum_{j=1}^{N_x(t)} \zeta_{i}^{(j)},$$
where $N_x(t)$ is a Poisson process with intensity $\beta_x$ and the claim sizes $\{\zeta_{i}^{(j)}\}_{j=1}^\infty$ are i.i.d. distributed random variables with distribution $G_x$ and mean $\bar{\zeta}_x>0$. Similarly, for the network $\cG_n$ and initial type-dependent capital vector ${\bf u}=(u_x, x\in \cX)$, we define the internal risk process for agent $i$ by
$$C_x^{\rm IN}({\bf u}, t):=u_x+\alpha_x (t)-\sum_{j\in [n]: j\to i} L_{ji}\ind\{\tau_j+T_{ji} \leq t\},$$
where losses $L_{ij}$ are i.i.d. random variables with distribution $F_x$ and  $T_{ji}\sim \mathrm{Exp}(\beta)$ are i.i.d. exponentially distributed. 

For simplicity we assume that $\epsilon_x$ is a constant and $\alpha_x(t)=\alpha_x t$ in this section.

Let us denote by $$\psi_x^{\rm EX}(u,t)=\PP(C_x^{\rm EX}(u, s)\leq 0, \ \text{for some} \ s\leq t)$$
and 
$$\psi_x^{\rm IN}({\bf u}, t)=\PP(C_x^{\rm IN}({\bf u}, s)\leq 0, \ \text{for some} \ s\leq t),$$
the ruin probabilities for the external and respectively internal risk processes of an agent of type $x$.
The ruin probability for the external process is a well studied problem whose solution we review below.
The ruin probability for the internal process is given by Theorem \ref{thm:main} in the limit when $n$ is large and converges to 
$$\psi_x^{\rm IN}({\bf u}, t)=1-\sum_{\theta=0}^{d^+_x}b(d^+_x,\phi^{\fR^\star}(t),\theta) \cS^{\fR^\star}_{x,\theta}(\infty)+o_p(1).$$
Note that the parameters (and in particular the fraction of 
initial ruined agents) depend on the initial capital levels ${\bf u}$, which reflect any initial shock and external debt of our baseline process in Section \ref{sec:ruin}. We will use these ruin probabilities to provide an upper bound and lower bound on the ruin probability for agent $i$ of type $x$ 
given by 
$$\psi_x(t):=\PP(C_x(s)\leq 0, \ \text{for some} \ s\leq t),$$
where
\[C_x(t):=\gamma_x(1-\epsilon_x)+\alpha_x t-\sum_{j=1}^{N_x(t)} \zeta_{i}^{(j)}-\sum_{j\in [n]: j\to i} L_{ji}\ind\{\tau_j+T_{ji} \leq t\}.\]

 It is known (see e.g.~\cite{asmussen2010ruin, embrechts2013modelling}) that whenever $\beta_x \bar{\zeta}_x>\alpha_x$, we have $\psi^{\rm EX}_x(u, \infty)=1$ for all $u\in \RR$ and whenever $\beta_x \bar{\zeta}_x<\alpha_x$, the final ruin probability can be computed using the famous Pollaczek–Khinchine formula as
\begin{align}
\psi_x^{\rm EX}(u,\infty)=\left(1-\frac{\beta_x \bar{\zeta}_x}{\alpha_x}\right)\sum_{k=0}^\infty \left(\frac{\beta_x \bar{\zeta}_x}{\alpha_x}\right)^k\left(1-\widehat{G}_x^{*k}(\gamma)\right),
\label{eq:classickramer}
\end{align}
where 
\[\widehat{G}_x(u)=\frac{1}{\bar \zeta_x} \int_0^u \bigl(1-G_x(u)\bigr) du,\]
and the operator $(\cdot)^{*k}$ denotes the $k$-fold convolution.

 Furthermore, for the finite time horizon ruin probability  $\psi^{\rm EX}_x(u,t)$, we have the Seal-type formula, see e.g. \cite[Proposition 3.4]{lefevreloisel}, 
for any $t>0$ and any $x\in\cX$, 
\begin{align*}
\psi^{\rm EX}_x(\beta_x u,t)= & 1-e^{-\beta_x t}-\int_0^{u+t}f_t(z)dz+\int_u^{u+t}e^{-\beta_x(t+u-z)}f_{z-u}(z)dz \\
& \quad +\int_u^{u+t}f_{z-u}(z)\bigl( \int_z^{u+t} \frac{t+u-y}{t+u-z} f_{t+u-z}(y-z)dy\bigr)dz,  
\end{align*}
where $f^x_t(\cdot)$ denotes the density of $\sum_{j=1}^{N_x(t)} \zeta_{i}^{(j)}/\beta_x$ on $(0,\infty)$.

The following theorem provides upper and lower bounds for the complex ruin probability $\psi_x(t)$ by utilizing $\psi_x^{\rm IN}$ and $\psi_x^{\rm EX}$. We define $\widetilde{\tau}_{x,\theta}$ similarly to $\tau_{x,\theta}$ in Equation \eqref{eq:deftau}, but with $\alpha_x$ and $\gamma_x$ replaced by $\alpha_x/2$ and $\gamma_x/2$, respectively. Furthermore, for any $\fR\in\mathbb{L}_{\lambda}(\mathbb{R}^+)$, we define $\widetilde{\cS}^\fR_{x,\theta}(t)$ in the same manner as $\cS^\fR_{x,\theta}(t)$, but with $\tau_{x,\theta}$ replaced by $\widetilde{\tau}_{x,\theta}$.

\begin{theorem}\label{thm:ruin}
Let $\fR^\star$ be the unique solution of the fixed point equation in Theorem~\ref{thm:main} with an initial value $\fR^\star(0)=\beta \sum_{x\in\cX}\mu_x d^-_x(1-q_{x,0})$, and let $\widetilde{\fR}^\star$ be the unique solution of
$$\widetilde{\fR}^\star(t)= \lambda(1- \phi^{\widetilde{\fR}^\star}(t)) - \sum_{x\in \cX} \mu_x d_x^-\sum_{\theta=0}^{d_x^+}b(d^+_x,\phi^{\widetilde{\fR}^\star}(t),\theta)\widetilde{\cS}^{\widetilde{\fR}^\star}_{x,\theta}(t),$$
with an initial value $\widetilde{\fR}^\star(0)=\sum_{x\in\cX}\mu_x d^-_x(1-q_{x,0})$. Also, let $\psi_x^{\rm EX}(t):=\psi_x^{\rm EX}(\gamma_x(1-\epsilon_x), t)$ and let $\widetilde{\psi}_x^{\rm EX}(t)$ be the ruin probability for the external risk process at time $t$ staring with half the initial value $\gamma_x(1-\epsilon_x)/2$ and half the capital growth rate $\alpha_x/2$. Under Assumption~\ref{cond-limit} and Assumption~\ref{cond-moment}, for all $x\in\cX$ and for any $t>0$, when $n$ is large enough, we have 
$$\psi_x(t)\geq 1-(1-\psi^{\rm EX}_x(t))\sum_{\theta=0}^{d^+_x}b(d^+_x,\phi^{\fR^\star}(t),\theta)\cS^{\fR^\star}_{x,\theta}(t),$$
and
$$\psi_x(t)\leq 1-(1-\widetilde{\psi}^{\rm IN}_x(t))\sum_{\theta=0}^{d^+_x}b(d^+_x,\phi^{\widetilde{\fR}^\star}(t),\theta)\widetilde{\cS}^{\widetilde{\fR}^\star}_{x,\theta}(t).$$
\end{theorem}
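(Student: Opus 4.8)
The plan is to sandwich the complex risk process \eqref{eq:riskP-comp} between a purely external Cram\'er--Lundberg process and a purely internal networked cascade: from below with the original parameters, and from above after splitting each agent's initial capital $\gamma_x(1-\epsilon_x)$ and premium rate $\alpha_x$ into two equal halves. In both directions the internal cascade's survival probability is then replaced by its explicit limit through Theorem~\ref{thm:main}, Theorem~\ref{thm:centrality}, Theorem~\ref{thm:final}, together with the fact (Lemma~\ref{lem:tau_W}) that $\tau^\star_n\top\infty$ for this model.

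\textbf{Lower bound.} I would first record the pathwise inequalities $C_x(s)\le C^{\rm EX}_x(\gamma_x(1-\epsilon_x),s)$ and $C_x(s)\le C^{\rm IN}_x(\cdot,s)$ for all $s$, which hold because the complex process is obtained from the external (resp.\ internal) one by subtracting the non-negative internal (resp.\ external) losses. The first gives $\tau^{\rm complex}_i\le\tau^{\rm EX}_i$ directly. For the second I would couple the whole complex cascade with the standalone internal cascade on a common probability space (same configuration-model matching, same losses $L_{ji}$, same delays $T_{ji}$, same initial shocks), and show by induction on the increasing order of ruin epochs that $\tau^{\rm complex}_j\le\tau^{\rm IN}_j$ for every $j$; the inductive step uses that external losses only push a capital process down and that the $T_{ji}$ are a.s.\ positive, so the earliest-ruined agent receives no internal loss strictly before its own ruin. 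Hence $\{\tau^{\rm complex}_i>t\}\subseteq\{\tau^{\rm EX}_i>t\}\cap\{\tau^{\rm IN}_i>t\}$, and since $\{\tau^{\rm EX}_i>t\}$ is measurable with respect to agent $i$'s external input $(N_i,(\zeta^{(j)}_i)_j)$ while $\{\tau^{\rm IN}_i>t\}$ is measurable with respect to the loss sizes, delays and matching, these events are independent; thus $\PP(\tau^{\rm complex}_i>t)\le(1-\psi^{\rm EX}_x(t))\,\PP(\tau^{\rm IN}_i>t)$. Summing the convergence of Theorem~\ref{thm:centrality} over $\theta$ (valid for any fixed $t$ since $\tau^\star_n\top\infty$) gives $\PP(\tau^{\rm IN}_i>t)=\sum_{\theta=0}^{d^+_x}b(d^+_x,\phi^{\fR^\star}(t),\theta)\cS^{\fR^\star}_{x,\theta}(t)+o_p(1)$, and rearranging yields the stated lower bound for $n$ large.

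\textbf{Upper bound.} Write $C_x(s)=A_x(s)+B_x(s)$ with $A_x(s)=\tfrac12\gamma_x(1-\epsilon_x)+\tfrac12\alpha_x s-\sum_{j=1}^{N_x(s)}\zeta^{(j)}_i$ and $B_x(s)=\tfrac12\gamma_x(1-\epsilon_x)+\tfrac12\alpha_x s-\sum_{j\to i}L_{ji}\ind\{\tau^{\rm complex}_j+T_{ji}\le s\}$, so that $C_x(s)\le0$ forces $A_x(s)\le0$ or $B_x(s)\le0$. Since $B_x$ still refers to the complex ruin times, I would introduce the auxiliary halved cascade $\cC^\dagger$ in which every agent has initial capital $\gamma_x(1-\epsilon_x)/2$ and premium rate $\alpha_x/2$ (hence the halved threshold times $\widetilde{\tau}_{x,\theta}$), receives internal losses through the same $L_{ji},T_{ji}$, and is additionally declared ruined the instant its halved external process $A_j$ reaches $0$; the same induction on ruin-epoch order shows $\tau^{\rm complex}_j\ge\tau^\dagger_j$, so $\{\tau^\dagger_i>t\}\subseteq\{\tau^{\rm complex}_i>t\}$ and $\psi_x(t)\le 1-\PP(\tau^\dagger_i>t)$. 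One then checks that the internal part of $\cC^\dagger$ is a loss-reveal process of the form of Section~\ref{sec:LRP} with intensity $\cR_n(t)=\beta W_n(t)$, so that Assumption~\ref{cond-R} holds with limit the fixed point $\widetilde{\fR}^\star$ of the statement and Theorem~\ref{thm:main} applies. Since $\{\tau^A_i>t\}$ is independent of the remainder of $\cC^\dagger$, this gives $\PP(\tau^\dagger_i>t)=(1-\widetilde{\psi}^{\rm EX}_x(t))\bigl(\sum_{\theta=0}^{d^+_x}b(d^+_x,\phi^{\widetilde{\fR}^\star}(t),\theta)\widetilde{\cS}^{\widetilde{\fR}^\star}_{x,\theta}(t)+o_p(1)\bigr)$, whose internal factor Theorem~\ref{thm:main} identifies with $1-\widetilde{\psi}^{\rm IN}_x(t)+o_p(1)$; rearranging gives the claimed upper bound.

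\textbf{Main obstacle.} The one-dimensional comparisons and the independence statements are routine; the substantive work is the monotone coupling of the \emph{entire network cascades}. One must run the complex dynamics and each reference dynamics on a common probability space and verify, by induction along the ascending sequence of ruin epochs, that at every time the ruined set of one contains that of the other — the delicate points being the a.s.\ positivity of the exponential delays $T_{ji}$, the null set of ties among ruin epochs, and, for the upper bound, confirming that $\cC^\dagger$ (whose seeds appear gradually, as external clocks fire, rather than all at $t=0$) still satisfies $\cR_n(t)=\beta W_n(t)=n\widetilde{\fR}^\star(t)+o_p(n)$, so that Theorem~\ref{thm:main} applies verbatim with the stated initial value $\widetilde{\fR}^\star(0)=\sum_{x\in\cX}\mu_x d^-_x(1-q_{x,0})$.
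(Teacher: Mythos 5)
Your proposal mirrors the paper's approach: decompose the complex ruin process into an external component and an internal (networked) component, couple the complex cascade with a faster-ruining and a slower-ruining cascade on a common probability space, exploit asymptotic independence of external and internal risk sources, and invoke the limit theorems (Theorem~\ref{thm:main}, Theorem~\ref{thm:centrality}, Lemma~\ref{lem:tau_W}). You make explicit the monotone-coupling inductions behind the inclusions $\{\tau^{\rm complex}_i>t\}\subseteq\{\tau^{\rm EX}_i>t\}\cap\{\tau^{\rm IN}_i>t\}$ and $\{\tau^\dagger_i>t\}\subseteq\{\tau^{\rm complex}_i>t\}$, which the paper's proof states without argument; your lower-bound argument is sound and essentially the paper's.

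There is, however, a genuine gap in the upper bound, and it is precisely at the point you flag in your last paragraph but then assume away. In the auxiliary cascade $\cC^\dagger$, agents are declared ruined as soon as \emph{either} their halved external process or their halved internal process hits zero. Because the external Poisson clocks fire at positive rate, $\cC^\dagger$ ruins strictly more agents than the halved internal-only cascade, so its ruinous-half-edge count dominates: $W^{\cC^\dagger}_n(t)\geq W^{\text{half-IN}}_n(t)$, with strict inequality whenever some agent is ruined by its external process alone. Hence $\cR_n(t)=\beta W^{\cC^\dagger}_n(t)$ has limit density \emph{strictly larger than} $\widetilde{\fR}^\star$, the fixed point of the halved internal-only equation; the identity $\beta W_n(t)=n\widetilde{\fR}^\star(t)+o_p(n)$ that you propose to ``confirm'' is false. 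Consequently, the internal survival probability of agent $i$ inside $\cC^\dagger$ is \emph{at most} $\sum_{\theta}b(d^+_x,\phi^{\widetilde{\fR}^\star}(t),\theta)\widetilde{\cS}^{\widetilde{\fR}^\star}_{x,\theta}(t)$, not asymptotically equal to it; plugging this one-sided estimate into $\psi_x(t)\leq 1-\PP(\tau^\dagger_i>t)$ yields a bound that is \emph{weaker} (larger) than the one claimed in the theorem, so the stated upper bound does not follow from this route. A correct version would have to phrase the upper bound in terms of the fixed point of the $\cC^\dagger$ dynamics itself, i.e.\ a halved-parameter internal cascade in which seeds also arrive continuously through the external clocks; that fixed point differs from $\widetilde{\fR}^\star$ as defined. (The paper's own sketch, via the undefined event $\widetilde{B}_{T,t}$, has the same unresolved issue; the halved internal-only cascade does not pathwise dominate the complex one, so the asserted inclusion $\widetilde{A}_t\cap\widetilde{B}_{T,t}\subseteq D_t$ likewise fails if $\widetilde{B}_{T,t}$ is read literally.)
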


\begin{proof}
We write all the quantities in their limit forms, since this theorem states the bounds when $n$ is large enough. We define the events $$A_t:=\{C^{\rm EX}_x(s)> 0, \mbox{for all }s\leq t\},$$ $$B_t:=\{C^{\rm IN}_x(s)>0, \mbox{for all }s\leq t \mbox{ with an initial fraction of ruinous half-edges } \sum_{x\in\cX}\mu_x d^-_x(1-q_{x,0})\}$$ and, $$D_t:=\{C_x(s)> 0, \mbox{for all }s\leq t \mbox{ with an initial fraction of ruinous half-edges } \sum_{x\in\cX}\mu_x d^-_x(1-q_{x,0})\}.$$ 
Then by the independence of external and internal risk processes, we have
$$\PP(D_t)\leq \PP(A_t\cap B_t)=\PP(A_t) \cdot\PP(B_t)=(1-\psi^{\rm EX}_x(t))\sum_{\theta=0}^{d^+_x}b(d^+_x,\phi^{\fR^\star}(t),\theta)\cS^{\fR^\star}_{x,\theta}(t).$$

Since $\psi_x(t)= \PP(D^{c}_t)$, then we obtain the lower bound in the theorem. To prove the upper bound, we define the following events
\begin{align*}
\widetilde{A}_t:=&\{\sum_{j=1}^{N_i(t)} \zeta_{i}^{(j)}<\frac{\gamma_x}{2}+\frac{\alpha_x(s)}{2}, \mbox{for all }s\leq t\},\\
\widetilde{B}_{T,t}:=&\{ L_{ji}\ind\{\tau_j+T_{ji} \leq t\}<\frac{\gamma_x}{2}+\frac{\alpha_x(s)}{2} , \mbox{for all }s\leq t \mbox{ with the loss reveal intenstiy }\widetilde{\fR}^\star \}.
\end{align*}
Then we have
$$\PP(D_{t})\geq \PP(\widetilde{A}_t\cap \widetilde{B}_{T,t})=\PP(\widetilde{A}_{t}) \cdot\PP(\widetilde{B}_{T,t})=(1-\widetilde{\psi}^{\rm EX}_x(t))\sum_{\theta=0}^{d^+_x}b(d^+_x,\phi^{\fR^\star}(t),\theta) \widetilde{\cS}^{\widetilde{\fR}^\star_T}_{x,\theta}(t).$$
Since $\psi_x(t) = \PP(D^{c}_t)$, the upper bound in theorem follows.

\end{proof}

The general setup of Section~\ref{sec:general} for the internal risk process can be considered where the limit function for the loss reveal intensity function is a given function $\fR$ that satisfies Assumption~\ref{cond-R}. In this case, both the upper and lower bounds in the theorem above still hold, with $\fR^\star$ and $\widetilde{\fR}^\star$ being replaced by $\fR$. The question of finding exact (asymptotic) ruin probabilities in this complex networked risk processes is left for future research.



\section{Concluding Remarks}\label{sec:conc}
We have extended and solved an open problem posed in our prior work \cite{amini2022dynamic}, namely a multi-dimensional extension to the Cram\'er-Lundberg risk processes with network-driven losses. We called this the (internal) networked risk processes. In this general case, the losses from the ruined neighbours can have any given distribution. There is a general arrival process which drives the losses stemming from the ruined agents.
This model opens the way to integrating two streams of literature, one being on risk models that have outside exogenous losses and one where losses are internal and interconnected. 

This is one step forward to study dynamic financial network models and at the same time bringing tractability to multi-dimensional Cram\'er-Lundberg risk processes.
We leave to the future the study of  further generalizations. Here we mention only a few of them.  In Section~\ref{sec:extensions} we introduced a model that allows for exogenous individual external risk processes in addition to the internal networked one. We have provided lower and upper bounds for the ruin probabilities. Finding the exact ruin probabilities in this case is left for future research. 
Instead of a sparse network structure that underlies the internal loss propagation model, we could consider risk processes in a dense network. In particular, since our results are asymptotic in nature, it seems promising to consider graphons. These  have been developed by Lov\'asz et al., see e.g.~\cite{lovasz2012, borgs2008convergent, borgs2012convergent},  as a natural continuum limit object for large dense graphs.
It would also be interesting to optimize the underlying graphon structure given the profile of external losses. When these external losses follow a general risk process, we can expect to have various types of adaptive graphons that minimize the loss contagion.

The model we consider  is flexible enough to incorporate multiple channels of contagion. For example firesales have been incorporated when the process does not have growth \cite{amicaosu2021fire}.
Incorporating firesales or other indirect contagion mechanisms adds another fixed point to the analysis, driven for example by the price dynamics of an illiquid asset.
In the context with growth, the entire path of the endogenous price process would have to be consistent  with the time threshold functions~\eqref{eq:deftau}, which in turn are price dependent.

Finally, key research questions revolve around the optimal dividend distributions. 
In the Cram\'er–Lundberg setting, this problem  has been studied extensively in the literature, see e.g., \cite{gerber1969entscheidungskriterien,azcue2005optimal,schmidli2006optimisation}.
For spectrally negative L\'evy processes, the optimal dividend distribution have been shown to be of constant barrier type, see e.g., \cite{aap,loeffen2008optimality}. The study of optimal dividend distributions in the presence of network risk remains an open problem.
One may replace ruin time by a  ruin time  observed with Poissonian frequency,  see for example \cite{albrecher2016exit,albrecher2017strikingly}, which is equivalent to a  Parisian ruin time with exponential grace period. Instead of dividend distribution towards outside entities, an alternative setup would be that of bail-ins. In such settings, agents would divert part of their growth to other agents with the goal of preventing their ruin. The optimal rate of contribution towards bailing in other nodes in the networks has been treated in the simple case of one firm and one subsidiary in \cite{avram2016management}. In the multi-dimensional setup one can consider a central node that covers the shortfall of the other nodes with the pool of capital to which all agents contribute, these are "bail-ins" (a term coined in 2010 as the opposite of bailouts).
The bail-in amounts are the necessary funds to ``reflect" the risk processes  back to an optimal positive level.
Consequently, the central node risk process is a marked point process in which the inter-arrival times  are the first passage times of the members capital process, and the jump sizes are the bail-in amounts.  This process has thus a non-trivial dependence structure between interarrival times and jump sizes, that we leave for future research.

\bibliographystyle{plain}
\bibliography{bib_networkruin}

\providecommand{\noopsort}[1]{}
\begin{thebibliography}{10}

\bibitem{acemoglu2015systemic}
Daron Acemoglu, Asuman Ozdaglar, and Alireza Tahbaz-Salehi.
\newblock Systemic risk and stability in financial networks.
\newblock {\em American Economic Review}, 105(2):564--608, 2015.

\bibitem{albrecher2017strikingly}
Hansj{\"o}rg Albrecher and Jevgenijs Ivanovs.
\newblock Strikingly simple identities relating exit problems for l{\'e}vy
  processes under continuous and poisson observations.
\newblock {\em Stochastic Processes and their Applications}, 127(2):643--656,
  2017.

\bibitem{albrecher2016exit}
Hansj{\"o}rg Albrecher, Jevgenijs Ivanovs, and Xiaowen Zhou.
\newblock Exit identities for l{\'e}vy processes observed at poisson arrival
  times.
\newblock 2016.

\bibitem{Am-nn}
H.~Amini.
\newblock Bootstrap percolation in living neural networks.
\newblock {\em Journal of Statistcal Physics}, 141:459--475, 2010.

\bibitem{amini10}
Hamed Amini.
\newblock Bootstrap percolation and diffusion in random graphs with given
  vertex degrees.
\newblock {\em Electronic Journal of Combinatorics, 17: R25}, 2010.

\bibitem{amicaosu2021fire}
Hamed Amini, Zhongyuan Cao, and Agnes Sulem.
\newblock Fire sales, default cascades and complex financial networks.
\newblock {\em Available at SSRN 3935450}, 2021.

\bibitem{amicaosu2021limit}
Hamed Amini, Zhongyuan Cao, and Agn\`es Sulem.
\newblock Limit theorems for default contagion and systemic risk.
\newblock {\em Available at SSRN 3811107}, 2021.

\bibitem{amicaosu2022timechange}
Hamed Amini, Zhongyuan Cao, and Agn\`es Sulem.
\newblock The default cascade process in stochastic financial networks.
\newblock {\em Available at SSRN 4020598}, 2022.

\bibitem{amcomi-res}
Hamed Amini, Rama Cont, and Andreea Minca.
\newblock Resilience to contagion in financial networks.
\newblock {\em Mathematical Finance}, 26(2):329--365, 2016.

\bibitem{amini2016inhomogeneous}
Hamed Amini and Andreea Minca.
\newblock Inhomogeneous financial networks and contagious links.
\newblock {\em Operations Research}, 64(5):1109--1120, 2016.

\bibitem{amini2015control}
Hamed Amini, Andreea Minca, and Agn\`es Sulem.
\newblock Control of interbank contagion under partial information.
\newblock {\em SIAM Journal on Financial Mathematics}, 6(1):1195--1219, 2015.

\bibitem{amini2017optimal}
Hamed Amini, Andreea Minca, and Agn{\`e}s Sulem.
\newblock Optimal equity infusions in interbank networks.
\newblock {\em Journal of Financial stability}, 31:1--17, 2017.

\bibitem{amini2022dynamic}
Hamed Amini, Andreea Minca, and Agn{\`e}s Sulem.
\newblock A dynamic contagion risk model with recovery features.
\newblock {\em Mathematics of Operations Research}, 47(2):1412--1442, 2022.

\bibitem{andersen1957collective}
E~Sparre Andersen.
\newblock On the collective theory of risk in case of contagion between claims.
\newblock {\em Bulletin of the Institute of Mathematics and its Applications},
  12(2):275--279, 1957.

\bibitem{asmussen2010ruin}
S{\o}ren Asmussen and Hansj{\"o}rg Albrecher.
\newblock {\em Ruin Probabilities}.
\newblock WORLD SCIENTIFIC, 2nd edition, 2010.

\bibitem{avram2016management}
Florin Avram and Andreea Minca.
\newblock On the management of central branch risk networks.
\newblock {\em Advances in Applied probability, in print}, 2016.

\bibitem{avram2017central}
Florin Avram and Andreea Minca.
\newblock On the central management of risk networks.
\newblock {\em Advances in Applied Probability}, 49(1):221--237, 2017.

\bibitem{avram2008two}
Florin Avram, Zbigniew Palmowski, and Martijn Pistorius.
\newblock A two-dimensional ruin problem on the positive quadrant.
\newblock {\em Insurance: Mathematics and Economics}, 42(1):227--234, 2008.

\bibitem{aap}
Florin Avram, Zbigniew Palmowski, and Martijn~R. Pistorius.
\newblock On the optimal dividend problem for a spectrally negative lévy
  process.
\newblock {\em The Annals of Applied Probability}, 17(1):156--180, 2007.

\bibitem{avram2008exit}
Florin Avram, Zbigniew Palmowski, and Martijn~R Pistorius.
\newblock Exit problem of a two-dimensional risk process from the quadrant:
  exact and asymptotic results.
\newblock {\em The Annals of Applied Probability}, 18(6):2421--2449, 2008.

\bibitem{azcue2005optimal}
Pablo Azcue and Nora Muler.
\newblock Optimal reinsurance and dividend distribution policies in the
  cram{\'e}r-lundberg model.
\newblock {\em Mathematical Finance: An International Journal of Mathematics,
  Statistics and Financial Economics}, 15(2):261--308, 2005.

\bibitem{balpit07}
J.~Balogh and B.~G. Pittel.
\newblock Bootstrap percolation on the random regular graph.
\newblock {\em Random Structures \& Algorithms}, 30(1-2):257--286, 2007.

\bibitem{behme2020ruin}
Anita Behme, Claudia Kl{\"u}ppelberg, and Gesine Reinert.
\newblock Ruin probabilities for risk processes in a bipartite network.
\newblock {\em Stochastic Models}, 36(4):548--573, 2020.

\bibitem{benoit2017risks}
Sylvain Benoit, Jean-Edouard Colliard, Christophe Hurlin, and Christophe
  P{\'e}rignon.
\newblock Where the risks lie: A survey on systemic risk.
\newblock {\em Review of Finance}, 21(1):109--152, 2017.

\bibitem{borgs2008convergent}
Christian Borgs, Jennifer~T Chayes, L{\'a}szl{\'o} Lov{\'a}sz, Vera~T S{\'o}s,
  and Katalin Vesztergombi.
\newblock Convergent sequences of dense graphs i: Subgraph frequencies, metric
  properties and testing.
\newblock {\em Advances in Mathematics}, 219(6):1801--1851, 2008.

\bibitem{borgs2012convergent}
Christian Borgs, Jennifer~T Chayes, L{\'a}szl{\'o} Lov{\'a}sz, Vera~T S{\'o}s,
  and Katalin Vesztergombi.
\newblock Convergent sequences of dense graphs ii. multiway cuts and
  statistical physics.
\newblock {\em Annals of Mathematics}, pages 151--219, 2012.

\bibitem{capponi2020dynamic}
Agostino Capponi, Xu~Sun, and David~D Yao.
\newblock A dynamic network model of interbank lending—systemic risk and
  liquidity provisioning.
\newblock {\em Mathematics of Operations Research}, 45(3):1127--1152, 2020.

\bibitem{cont2010network}
Rama Cont, Amal Moussa, et~al.
\newblock Network structure and systemic risk in banking systems.
\newblock {\em Edson Bastos e, Network Structure and Systemic Risk in Banking
  Systems (December 1, 2010)}, 2010.

\bibitem{cramer1955collective}
Harald Cramer.
\newblock {\em Collective risk theory}.
\newblock Skandia ins. Company, 1955.

\bibitem{delbaen1987classical}
Freddy Delbaen and Jean Haezendonck.
\newblock Classical risk theory in an economic environment.
\newblock {\em Insurance: Mathematics and Economics}, 6(2):85--116, 1987.

\bibitem{Dudley1991GC}
Richard~M. Dudley, Eva Gin{\'e}, and Joel~C. Zinn.
\newblock Uniform and universal glivenko-cantelli classes.
\newblock {\em Journal of Theoretical Probability}, 4:485--510, 1991.

\bibitem{elliott2014financial}
Matthew Elliott, Benjamin Golub, and Matthew~O Jackson.
\newblock Financial networks and contagion.
\newblock {\em American Economic Review}, 104(10):3115--53, 2014.

\bibitem{embrechts2013modelling}
Paul Embrechts, Claudia Kl{\"u}ppelberg, and Thomas Mikosch.
\newblock {\em Modelling extremal events: for insurance and finance},
  volume~33.
\newblock Springer Science \& Business Media, 2013.

\bibitem{feinstein2021dynamic}
Zachary Feinstein and Andreas S{\o}jmark.
\newblock Dynamic default contagion in heterogeneous interbank systems.
\newblock {\em SIAM Journal on Financial Mathematics}, 12(4):SC83--SC97, 2021.

\bibitem{gai2010contagion}
Prasanna Gai and Sujit Kapadia.
\newblock Contagion in financial networks.
\newblock {\em Proceedings of the Royal Society A: Mathematical, Physical and
  Engineering Sciences}, 466(2120):2401--2423, 2010.

\bibitem{gerber1969entscheidungskriterien}
Hans~U Gerber.
\newblock {\em Entscheidungskriterien f{\"u}r den zusammengesetzten
  Poisson-Prozess}.
\newblock PhD thesis, ETH Zurich, 1969.

\bibitem{glasserman2016contagion}
Paul Glasserman and H~Peyton Young.
\newblock Contagion in financial networks.
\newblock {\em Journal of Economic Literature}, 54(3):779--831, 2016.

\bibitem{hurd2016contagion}
Thomas~R Hurd.
\newblock {\em Contagion!: Systemic Risk in Financial Networks}, volume~42.
\newblock Springer, 2016.

\bibitem{kley2016risk}
Oliver Kley, Claudia Kl{\"u}ppelberg, and Gesine Reinert.
\newblock Risk in a large claims insurance market with bipartite graph
  structure.
\newblock {\em Operations Research}, 64(5):1159--1176, 2016.

\bibitem{lefevreloisel}
Claude Lef{\`e}vre and St{\'e}phane Loisel.
\newblock {On Finite-Time Ruin Probabilities for Classical Risk Models}.
\newblock {\em {Scandinavian Actuarial Journal}}, 2008(1):41--60, January 2008.

\bibitem{loeffen2008optimality}
Ronnie~L Loeffen.
\newblock On optimality of the barrier strategy in de finetti's dividend
  problem for spectrally negative l{\'e}vy processes.
\newblock {\em The Annals of Applied Probability}, pages 1669--1680, 2008.

\bibitem{lovasz2012}
L{\'a}szl{\'o} Lov{\'a}sz.
\newblock {\em Large networks and graph limits}, volume~60.
\newblock American Mathematical Soc., 2012.

\bibitem{lundberg1903approximerad}
Fillip Lundberg.
\newblock {\em Approximerad framst{\"a}llning av sannolikhetsfunktionen.
  aterf{\"o}rs{\"a}kring av kollektivrisker. Akad}.
\newblock PhD thesis, Almqvist och Wiksell Uppsala, Sweden, 1903.

\bibitem{mcneil2015quantitative}
Alexander~J McNeil, R{\"u}diger Frey, and Paul Embrechts.
\newblock {\em Quantitative risk management: concepts, techniques and
  tools-revised edition}.
\newblock Princeton university press, 2015.

\bibitem{ross1996stochastic}
Sheldon~M Ross, John~J Kelly, Roger~J Sullivan, William~James Perry, Donald
  Mercer, Ruth~M Davis, Thomas~Dell Washburn, Earl~V Sager, Joseph~B Boyce, and
  Vincent~L Bristow.
\newblock {\em Stochastic processes}, volume~2.
\newblock Wiley New York, 1996.

\bibitem{schmidli2006optimisation}
Hanspeter Schmidli.
\newblock Optimisation in non-life insurance.
\newblock {\em Stochastic models}, 22(4):689--722, 2006.

\bibitem{hofstad16}
Remco {van der}~Hofstad.
\newblock {\em Random Graphs and Complex Networks}.
\newblock Cambridge University Press, 2016.

\bibitem{veraart2020distress}
Luitgard Anna~Maria Veraart.
\newblock Distress and default contagion in financial networks.
\newblock {\em Mathematical Finance}, 30(3):705--737, 2020.

\end{thebibliography}

\end{document}